\newtheorem{theorem}{Theorem}[section]
\newtheorem{lemma}[theorem]{Lemma}
\newtheorem{Proposition}[theorem]{Proposition}
\theoremstyle{definition}
\newtheorem{example}[theorem]{Example}
\newtheorem{remark}[theorem]{Remark}
\numberwithin{equation}{section}
\begin{document}

\date{}
\title[Dirac and Laplace type operators]
{The local counting function of operators of Dirac and Laplace type}

\author{Liangpan Li and Alexander Strohmaier}

\address{Department of Mathematical Sciences,
Loughborough University, LE11 3TU, UK}
 \email{l.li@lboro.ac.uk, a.strohmaier@lboro.ac.uk}

\subjclass[2000]{35P20 (primary), 35R01, 35Q41, 58J42 (secondary)}

\keywords{Dirac and Laplace type operators,
spectral zeta and eta functions, heat traces, Wodzicki residues,  local counting functions,
resolvent powers, Bochner-Weitzenb\"{o}ck technique.}

\date{}

\begin{abstract}
Let $P$ be a non-negative self-adjoint Laplace type operator
acting on sections of a hermitian vector bundle over a closed Riemannian manifold.
 In this paper we review the close relations between various $P$-related
  coefficients such as the  mollified spectral counting coefficients,
  the  heat trace coefficients, the resolvent trace coefficients,
  the residues
 of the spectral zeta function as well as certain Wodzicki residues.
 We then use the Wodzicki residue to obtain results about the local counting function of operators of Dirac and Laplace type.
   In particular,
we express the second term of the mollified spectral counting function of Dirac type operators
 in terms of geometric quantities and characterize those Dirac type operators for which this
 coefficient vanishes.
 \end{abstract}

\maketitle


\section{Introduction}

Let $M$ be a closed Riemannian manifold of dimension $d$ and with metric $g$.
Let $E$ be a smooth complex hermitian vector bundle over $M$.
As usual we denote by $C^\infty(M;E)$ the space of smooth sections of $E$, by $L^2(M;E)$
the Hilbert space of square integrable sections equipped with the natural inner product
defined by the hermitian structure on the fibres and the metric measure $\mu_g$ on $M$.

A second order partial differential operator  $P: C^\infty(M;E) \to C^\infty(M;E)$
is said to be of Laplace type if
its principal symbol $\sigma_P$ is of the form $\sigma_P(\xi) = g_x(\xi,\xi) \mathrm{id}_{E_x}$
for all covectors
$\xi \in T^*_x M$. In local coordinates this means that $P$ is of the form
\begin{equation}\label{Laplace local}
 P =-g^{ij}(x)\partial_i \partial_j + a^k(x)\partial_k + b(x),
\end{equation}
where $a^k$, $b$ are smooth matrix-valued functions,  and we have used Einstein's sum convention.
Given a Laplace type operator $P$, it is well known that there exist a unique connection $\nabla$ on $E$
and a unique bundle endomorphism $V \in C^\infty(M;\mathrm{End}(E))$
such that
$P = \nabla^* \nabla + V$.
Then $P$ is called a generalized Laplace operator if $V=0$. We will
assume here that $P$ is self-adjoint and non-negative. Thus
there exists an orthonormal basis $\{\phi_j\}_{j=1}^{\infty}$
for $L^2(M;E)$ consisting of smooth eigensections such that
$
 P \phi_j = \lambda_j^2 \phi_j,
$
where $\lambda_j$ are chosen non-negative and correspond to the eigenvalues of the operator $P^{1/2}$.

A first order partial differential operator  $D: C^\infty(M;E) \to C^\infty(M;E)$
is said to be of Dirac type if its square is of Laplace type.
This means that the principal symbol
$\sigma_D$ of $D$ satisfies the Clifford algebra relations
\begin{equation}
 \sigma_D(\xi) \sigma_D(\eta) +  \sigma_D(\eta) \sigma_D(\xi) = 2 g_x(\xi,\eta) \mathrm{id}_{E_x} \quad (\xi,\eta \in T^*_x M).
\end{equation}
Given a Dirac type operator $D$,
we denote by $\gamma$ the action of the Clifford algebra bundle on $E$ generated by the principal symbol of $D$, and suppose
$\nabla$ is a connection on $E$ that is compatible with the Clifford action $\gamma$ (see Section \ref{Section 3} for details).
We call the triple $(E,\gamma,\nabla)$ a Dirac bundle, and can express $D$ uniquely as
$D = \gamma \nabla + \psi$,
where  $\psi \in C^\infty(M;\mathrm{End}(E))$ is  called the potential of $D$ associated with the Dirac bundle $(E,\gamma,\nabla)$.
In particular, $D$ is called the  generalized Dirac operator associated
with the Dirac bundle $(E,\gamma,\nabla)$ if $\psi=0$. We also assume $D$ is self-adjoint, which means
 there exists a discrete spectral resolution $\{\phi_j,\mu_j\}_{j=1}^{\infty}$ of $D$, where
  $\{\phi_j\}_{j=1}^{\infty}$ is an orthonormal basis for $L^2(M;E)$, and
$
 D \phi_j = \mu_j \phi_j
$ for each $j$.
Obviously, $\phi_j$ will be eigensections of $P=D^2$ with eigenvalues $\mu_j^2$. Therefore,
 using the notation from before $\lambda_j = |\mu_j|$.

Given a  classical (polyhomogeneous) pseudodifferential operator $A$ of order
 $m\in\mathbb{R}$ acting on sections of $E$,
 the microlocalized spectral counting function $N_A(\mu)$ of $D$ is defined as
\begin{equation}
 N_A(\mu) = \left \{ \begin{matrix} \sum_{0 \leq \mu_j < \mu} \langle A \phi_j, \phi_j \rangle & \textrm{if }\mu>0,\\
 \sum_{\mu \leq \mu_j < 0} \langle A \phi_j, \phi_j \rangle & \textrm{if } \mu \leq 0.  \end{matrix} \right.
\end{equation}
Thus, $N_A(\mu)$ is a piecewise constant function on $\mathbb{R}$ such that
$$
 N'_A(\mu) = \sum_{j=1}^{\infty} \langle A \phi_j, \phi_j \rangle \delta_{\mu_j},
$$
where
 $\delta_{\mu_j}$ denotes the delta function on $\mathbb{R}$ centered at $\mu_j$.
In case $A$ is the operator of multiplication by a function $f(x) \in C^\infty(M)$
then
\[
 N_A(\mu) = \int_M f(x) N_x(\mu) d \mu_g(x),
\]
where
\begin{equation}
 N_x(\mu) = \left \{ \begin{matrix} \sum_{0 \leq \mu_j < \mu} \| \phi_j(x) \|_{E_x}^2  & \textrm{if }\mu>0,\\
 \sum_{\mu \leq \mu_j < 0}\| \phi_j(x) \|_{E_x}^2 & \textrm{if } \mu \leq 0  \end{matrix} \right.
\end{equation}
 is the so-called local counting function of $D$.

 Let $\chi \in \mathscr{S}(\mathbb{R})$ be a Schwartz function such that
 the Fourier transform $\mathscr{F}\chi$ of $\chi$ is $1$ near the origin
and  $\mathrm{supp}(\mathscr{F}\chi)\subset(-\delta,\delta)$, where $\delta$
is a positive constant smaller than half the radius
of injectivity of $M$. It is well known (see e.g.
\cite{Duistermaat Guillemin 1975,Ivrii,Ivrii 2,SafarovV,Sandoval,Zelditch} for various special cases) that
\begin{equation}\label{Mollified asmptotics Dirac new}
 (\chi * N_A' )(\mu) = \sum_{j=1}^{\infty} \langle A \phi_j, \phi_j \rangle \chi(\mu-\mu_j) \sim \sum_{k=0}^\infty \mathscr{A}_k(A,D) \mu^{d+m-k-1}\ \ \  (\mu\rightarrow \infty).
\end{equation}
This can be derived from studying the Fourier integral operator representation
of $A\frac{\mathrm{Sign}(D)+\mathrm{Id}_E}{2}e^{-\mathrm{i}t|D|}$ via the stationary phase method.
The mollified spectral counting coefficients $\mathscr{A}_k(A,D)$ do not depend on the choice of $\chi$, and are locally computable in terms of the total symbols of $A\frac{\mathrm{Sign}(D)+\mathrm{Id}_E}{2}$ and $|D|$. Note also the corresponding expansion for $\mu\rightarrow-\infty$ can be easily obtained from replacing $D$ with $-D$:
\begin{equation}
 (\chi * N_A' )(\mu)   \sim \sum_{k=0}^\infty \mathscr{A}_k(A,-D) |\mu|^{d+m-k-1}\ \ \  (\mu\rightarrow -\infty).
\end{equation}
Therefore, the function $\chi * N_A'$ contains all the information about $\{\mathscr{A}_k(A,\pm D)\}_{k=0}^{\infty}$.

One of the purposes of the paper is to show how to explicitly determine $\mathscr{A}_k(A,D)$. In particular,
for any bundle endomorphism $F$ of $E$, we can express $\mathscr{A}_1(F,D)$
 in terms of
geometric quantities such as $g,\gamma,\nabla,\psi,F$. To compare,
 Sandoval (\cite{Sandoval}) obtained an explicit expression of
 $\mathscr{A}_1(\mathrm{Id}_E,D)$, while Branson and Gilkey (\cite{Branson Gilkey}) can also do so for $\mathscr{A}_1(F,D)$ whenever
 $F$ is of the form $f\mathrm{Id}_E$ where $f$ is a smooth function on $M$. In the case of more general first order systems with
 $F = \mathrm{Id}_E$ a formula was also more recently obtained by Chervova, Downes and Vassiliev (\cite{Chervova JST}).
 In the case of operators of Dirac-type our paper also explains the relation between the results of \cite{Chervova JST} and the subsequent
 \cite{ChervovaDV} on one hand and known heat-trace invariants on the other.

The mollified spectral counting coefficients $\mathscr{A}_k(F,D)$ closely relate to the local counting function of $D$.
   Recall $D$ is a self-adjoint Dirac type operator with spectral resolution $\{\phi_j,\mu_j\}_{j=1}^{\infty}$.
 For each $j$ we denote by $\Psi_j=\Psi_j(x,y)$ the Schwartz kernel of the orthogonal projection onto the space
spanned by $\phi_j$, that is,
\[\langle \phi,\phi_j\rangle\phi_j(x)=\int_M\Psi_j(x,y)\phi(y)d\mu_g(y),\]
where $\phi\in C^{\infty}(M;E)$, $\Psi_j(x,y)$ maps $\phi(y)\in E_y$ to $\langle\phi(y),\phi_j(y)\rangle_{E_y}\phi_j(x)\in E_x$. We then denote by
 $\Phi_j=\Phi_j(x)=\Psi_j(x,x)\in C^\infty(M;\mathrm{End}(E))$ the diagonal restriction of $\Psi_j$.
Similar to  (\ref{Mollified asmptotics Dirac new}) there exists (see e.g. \cite{Ivrii}) an asymptotic expansion
\begin{equation}\label{16}
\sum_{j=1}^{\infty}\Phi_j\chi(\mu-\mu_j)\sim\ \sum_{k=0}^{\infty}\mu^{d-k-1}\mathscr{L}_k(D)\ \ \ (\mu\rightarrow\infty),
 \end{equation}
 where $\mathscr{L}_k(D)\in C^\infty(M;\mathrm{End}(E))$. This immediately implies that
\begin{equation}\label{17}
\mathscr{A}_k(F,D)=\int_M\mathrm{Tr}_E(F\mathscr{L}_k(D))\end{equation}
for all non-negative integers $k$. On the other hand,
 it is possible to recover $\mathscr{L}_k(D)$ from $\mathscr{A}_k(F,D)$ whenever explicit expressions of $\mathscr{A}_k(F,D)$  are known for all $F$.
 Note
   at each $x\in M$,
$\mathrm{Tr}_{E_x}(\Phi_j(x))=\|\phi_j(x)\|^2_{E_x}$. Thus
\begin{equation}
(\chi\ast N_x' )(\mu)\sim\sum_{k=0}^{\infty}\mu^{d-k-1}\mathrm{Tr}_{E_x}(\mathscr{L}_k(D)(x))\ \ \ (\mu\rightarrow\infty),
\end{equation}
whose asymptotic expansion coefficients $\mathrm{Tr}_{E_x}(\mathscr{L}_k(D)(x))$
are  determined by $\mathscr{L}_k(D)$.

The  mollified spectral counting  coefficients $\mathscr{A}_k(F,D)$  also closely relate to
 the small-time asymptotic expansion of the Schwartz kernels of $e^{-tD^2}$ and
 $De^{-tD^2}$.
Let $Q:C^{\infty}(M;E)\rightarrow C^{\infty}(M;E)$ be a partial differential operator of order $m$.
It is well known (see e.g. \cite{Fegan,Gilkey Invariance,Seeley 1966}) that  $Qe^{-t D^2}$ is a smoothing
operator with smooth kernel $K(t,x,y,Q,D^2)$ in the sense of
\begin{equation}\label{kernel}
(Qe^{-tD^2}\phi)(x)=\int_MK(t,x,y,Q,D^2)\phi(y)d\mu_g(y),
\end{equation}
where $K(t,x,y,Q,D^2)$ maps $E_y$ to $E_x$, and
the diagonal values of $K$ admit  a uniform small-time asymptotic expansion
\begin{equation}\label{heat kernel expansion}
K(t,x,x,Q,D^2)\sim\sum_{k=0}^{\infty}t^{\frac{k-d-m}{2}}{\mathscr H}_k(Q,D^2)(x)\ \ \ (t\rightarrow0^{+}),
\end{equation}
 where ${\mathscr H}_k(Q,D^2)\in C^\infty(M;\mathrm{End}(E))$ vanishes if $k+m$ is odd.
 It is not hard to show for all non-negative integers $k$ that
\begin{equation}\label{111}
\mathscr{L}_k(D)=\frac{\mathscr{H}_k(\mathrm{Id}_E,D^2)}{\Gamma(\frac{d-k}{2})}+\frac{\mathscr{H}_k(D,D^2)}{\Gamma(\frac{d+1-k}{2})},
\end{equation}
where $\Gamma$ denotes the Gamma function. This immediately implies that one is able to recover all the $\mathscr{H}_k(D,D^2)$
if the dimension $d$ of $M$ is odd and $\{\mathscr{H}_k(D,D^2)\}_{k=0}^d$ if $d$ is even from all the $\mathscr{L}_k(D)$.
To the authors' knowledge, although  how to express $\mathscr{H}_k(\mathrm{Id}_E,D^2)$, say for example $k\leq10$, is a well-studied topic (\cite{Gilkey AFSG,Gilkey 2007}),
it seems that no explicit formula for $\mathscr{H}_k(D,D^2)$ with $k$ odd is stated in the literature.

 Now we state the main results of the paper.  Let
$\sigma_A$, $\mathrm{Sub}(A)$ denote the principal and sub-principal symbol of $A$, respectively (See Section \ref{section 4} for details), and let $T_1^{\ast}M$ denote the unit cotangent bundle of $M$.
 For any self-adjoint Dirac type operator $D$ of potential $\psi$
 associated with the Dirac bundle $(E,\gamma,\nabla)$,  define
$
 \widehat \psi = \gamma(e_i) \psi  \gamma(e_i),
$
where $\{e_i\}_{i=1}^d$ is a local orthonormal frame in $T^*M$. Obviously, $\widehat \psi\in C^\infty(M;\mathrm{End}(E))$ is independent of
the choice of local orthonormal frames. Recall that $d$ denotes the dimension of $M$. If $k=0$, then it is easy to show that
\begin{equation}{\mathscr A}_0(A,D)=\frac{1}{2\cdot(2\pi)^{d}}\int_{T_1^{\ast}M}
\mathrm{Tr}_E\big(\sigma_A+\sigma_A\cdot\sigma_D\big).\end{equation}

\begin{theorem}\label{theorem 1.62} Let
 $D$ be a self-adjoint Dirac type operator and let $A$ be a classical pseudodifferential operator of order $m$ on sections of $E$. Then
\begin{align*}
{\mathscr A}_1(A,D)=&\frac{1}{2\cdot(2\pi)^{d}}\int_{T_1^{\ast}M} \mathrm{Tr}_E\Big(\mathrm{Sub}(A)-\frac{d+m-1}{2}\cdot\sigma_A\cdot\mathrm{Sub}(D^2)\Big)+\\
& \frac{1}{2\cdot(2\pi)^{d}}\int_{T_1^{\ast}M} \mathrm{Tr}_E\Big(\mathrm{Sub}(AD)-\frac{d+m}{2}\cdot\sigma_A\cdot\sigma_D\cdot\mathrm{Sub}(D^2)\Big).
\end{align*}
\end{theorem}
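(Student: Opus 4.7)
The plan is to reduce the computation to the positive first-order elliptic pseudodifferential operator $|D|=\sqrt{D^2}$, apply the standard expression for its second mollified counting coefficient, and unfold $|D|$ via the identity $|D|^2=D^2$.

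First, since only non-negative eigenvalues of $D$ contribute to the $\mu\to\infty$ asymptotics of $\chi\ast N_A'(\mu)$, the spectral projector $\Pi_+=\tfrac{1}{2}(\mathrm{Id}_E+\mathrm{Sign}(D))$ onto the non-negative spectral subspace satisfies $\mathscr{A}_k(A,D)=\mathscr{A}_k(A\Pi_+,|D|)$ for all $k\geq 0$. Using $\mathrm{Sign}(D)=D|D|^{-1}$ modulo a finite-rank operator on $\ker D$ (whose contribution to the asymptotic is $O(\mu^{-\infty})$), I would decompose
\[
A\Pi_+=\tfrac{1}{2}\,A+\tfrac{1}{2}\,AD\cdot|D|^{-1},
\]
so that
\[
\mathscr{A}_1(A,D)=\tfrac{1}{2}\,\mathscr{A}_1(A,|D|)+\tfrac{1}{2}\,\mathscr{A}_1\!\left(AD\cdot|D|^{-1},\,|D|\right).
\]
These two summands will produce respectively the first and the second line of the theorem.

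Next, I would invoke the formula (a consequence of the Duistermaat--Guillemin wave-trace expansion, equivalent to the Wodzicki-residue and zeta-residue identifications developed earlier in the paper) stating that for any classical pseudodifferential operator $B$ of order $\ell$ and any positive first-order elliptic pseudodifferential operator $Q$,
\[
\mathscr{A}_1(B,Q)=\frac{1}{(2\pi)^d}\int_{T_1^{\ast}M}\mathrm{Tr}_E\Bigl(\mathrm{Sub}(B)-(d+\ell-1)\,\sigma_B\,\mathrm{Sub}(Q)\Bigr).
\]
Since $\sigma_{|D|}=|\sigma_D|$ is scalar, the self-Poisson-bracket $\{\sigma_{|D|},\sigma_{|D|}\}$ vanishes, and applying the subprincipal product rule to $|D|\cdot|D|=D^2$ and to $|D|\cdot|D|^{-1}=\mathrm{Id}_E$ gives
\[
\mathrm{Sub}(|D|)\big|_{T_1^{\ast}M}=\tfrac{1}{2}\,\mathrm{Sub}(D^2)\big|_{T_1^{\ast}M},\qquad \mathrm{Sub}(|D|^{-1})\big|_{T_1^{\ast}M}=-\tfrac{1}{2}\,\mathrm{Sub}(D^2)\big|_{T_1^{\ast}M}.
\]
Substituting these into $\tfrac{1}{2}\mathscr{A}_1(A,|D|)$ with $\ell=m$ reproduces at once the first line of the theorem.

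For the remaining term $\tfrac{1}{2}\mathscr{A}_1(AD\cdot|D|^{-1},|D|)$, the subprincipal product rule gives
\[
\mathrm{Sub}(AD\cdot|D|^{-1})=\mathrm{Sub}(AD)\,\sigma_{|D|^{-1}}+\sigma_{AD}\,\mathrm{Sub}(|D|^{-1})+\tfrac{1}{2\mathrm{i}}\bigl\{\sigma_{AD},\sigma_{|D|^{-1}}\bigr\},
\]
which on $T_1^{\ast}M$ becomes $\mathrm{Sub}(AD)-\tfrac{1}{2}\sigma_A\sigma_D\,\mathrm{Sub}(D^2)+\tfrac{1}{2\mathrm{i}}\{\sigma_{AD},|\xi|_g^{-1}\}$. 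The crux is that the Poisson-bracket contribution disappears after taking $\mathrm{Tr}_E$ and integrating over $T_1^{\ast}M$: since trace commutes with the partial derivatives defining the bracket, $\mathrm{Tr}_E\{\sigma_{AD},|\xi|_g^{-1}\}=\{\mathrm{Tr}_E(\sigma_{AD}),|\xi|_g^{-1}\}$ equals the derivative of a scalar function along the Hamiltonian vector field of $|\xi|_g^{-1}$, which on $T_1^{\ast}M$ is a constant multiple of the geodesic vector field and therefore preserves the Liouville measure; the integral vanishes by Stokes' theorem on the closed contact manifold $T_1^{\ast}M$. Combining what remains with $\sigma_{AD\cdot|D|^{-1}}|_{T_1^{\ast}M}=\sigma_A\sigma_D$ and the prefactor $(d+\ell-1)=d+m-1$ yields exactly $\tfrac{1}{2(2\pi)^d}\int_{T_1^{\ast}M}\mathrm{Tr}_E\bigl(\mathrm{Sub}(AD)-\tfrac{d+m}{2}\sigma_A\sigma_D\,\mathrm{Sub}(D^2)\bigr)$, which is the second line of the theorem. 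The main obstacle is this subprincipal symbol algebra for the matrix-valued composition $AD\cdot|D|^{-1}$ together with the Poisson-bracket cancellation on $T_1^{\ast}M$; once that is established the full formula follows by direct substitution.
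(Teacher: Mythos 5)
Your proposal is correct, and its skeleton is the same as the paper's: both split off the positive spectral projector to write $\mathscr{A}_1(A,D)$ as half the sum of two second coefficients relative to $D^2$ (equivalently $|D|$), and both then feed each summand into the formula $\mathscr{A}_1(B,P)=\frac{1}{(2\pi)^d}\int_{T_1^{\ast}M}\mathrm{Tr}\big(\mathrm{Sub}(B)+\frac{1-d-\ell}{2}\sigma_B\,\mathrm{Sub}(P)\big)$, which is Theorem \ref{theorem 47} (your first‑order version for $Q=|D|$ is the same statement after $\mathrm{Sub}(|D|)=\tfrac12\mathrm{Sub}(D^2)$ on $T_1^{\ast}M$, exactly as you derive it). The one genuine divergence is how the factor $|D|^{-1}$ is disposed of. The paper first invokes Proposition \ref{Prop 22} (the zeta‑function shift $\mathscr{A}_k(BP^q,P)=\mathscr{A}_k(B,P)$) to replace $AD|D|^{-1}$ by $AD$, so it never has to compute $\mathrm{Sub}(AD|D|^{-1})$; you instead keep $AD|D|^{-1}$, expand its subprincipal symbol by the product rule, and must then kill the leftover term $\frac{1}{2\mathrm{i}}\{\sigma_{AD},|\xi|_g^{-1}\}$ under $\int_{T_1^{\ast}M}\mathrm{Tr}_E$. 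That cancellation is true and your justification is the right one — the Hamiltonian field of a function of $|\xi|_g$ is tangent to $T_1^{\ast}M$ and preserves the Liouville measure, so the integral of a derivative along it vanishes — but it is the one step requiring care about which measure $\int_{T_1^{\ast}M}$ denotes; within the paper's toolkit the same vanishing drops out for free by comparing the two groupings $\mathrm{res}\big((AD|D|^{-1})\cdot|D|^{1-d-m}\big)=\mathrm{res}\big(AD\cdot|D|^{-d-m}\big)$ via Proposition \ref{lemma 43}, which is essentially what Proposition \ref{Prop 22} buys the authors. In short: same decomposition and same key lemma, with your extra Poisson‑bracket lemma substituting for the paper's appeal to the invariance $\mathscr{A}_k(BP^q,P)=\mathscr{A}_k(B,P)$.
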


The subprincipal symbols appearing in the above expressions depend on a choice of local bundle frame. The integrals are however invariantly defined
as integration over the co-sphere at each point yields a well defined density on $M$ (see Section \ref{section 4}).
Note  that
\begin{equation}
\mathrm{Sub}(AB)=\mathrm{Sub}(A)\cdot\sigma_B+\sigma_A\cdot\mathrm{Sub}(B)+\frac{1}{2\mathrm{i}}\{\sigma_A,\sigma_B\},
\end{equation}
where $B$ is any other classical pseudodifferential operator on sections of $E$,  $\{\sigma_A,\sigma_B\}$
denotes the Possion bracket between $\sigma_A$ and $\sigma_B$. This means that ${\mathscr A}_1(A,D)$
depends only on the principal and sub-principal symbols of $A$ and $D$. In general,  our method of proof can also provide
an explicit formula for ${\mathscr A}_k(A,D)$ in terms of the local symbols of $A$ and $D$ for each $k\geq2$.
In the case when $A$ is a bundle endomorphism this specializes to the following theorem.

\begin{theorem}\label{theorem 1.6} Let
 $D$ be a self-adjoint Dirac type operator of potential $\psi$
 associated with the Dirac bundle $(E,\gamma,\nabla)$ and let $F$ be a smooth bundle endomorphism of $E$. Then
\begin{equation}{\mathscr A}_1(F,D)=\frac{1}{(4\pi)^{d/2}\cdot\Gamma(\frac{d}{2})}\int_M \mathrm{Tr}_E(F\cdot\frac{\widehat{\psi}-(d-2)\psi}{2}).\end{equation}
\end{theorem}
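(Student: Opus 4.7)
The plan is to derive Theorem \ref{theorem 1.6} by specializing Theorem \ref{theorem 1.62} to the case $A = F$, a smooth bundle endomorphism of $E$, and then reducing the resulting fiber integral over the unit cosphere bundle to an intrinsic expression in $\psi$ and $\widehat\psi$ by means of Clifford-algebraic averaging.

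Since $F$ acts by multiplication, its principal symbol is $\sigma_F = F$ and its subprincipal symbol vanishes, $\mathrm{Sub}(F)=0$; with $m=0$ Theorem \ref{theorem 1.62} collapses to
\[
 \mathscr{A}_1(F,D) = \frac{1}{2(2\pi)^d}\int_{T_1^*M}\mathrm{Tr}_E\!\left(\mathrm{Sub}(FD) - \tfrac{d-1}{2}F\,\mathrm{Sub}(D^2) - \tfrac{d}{2}F\,\sigma_D\,\mathrm{Sub}(D^2)\right).
\]
The product rule stated in the excerpt gives $\mathrm{Sub}(FD) = F\,\mathrm{Sub}(D) + \tfrac{1}{2\mathrm{i}}\{F,\sigma_D\}$ and $\mathrm{Sub}(D^2) = \mathrm{Sub}(D)\,\sigma_D + \sigma_D\,\mathrm{Sub}(D) + \tfrac{1}{2\mathrm{i}}\{\sigma_D,\sigma_D\}$; the task is thus to identify $\mathrm{Sub}(D)$ and to control the Poisson-bracket contributions. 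Working pointwise at a base point $x_0$ in a bundle frame which is simultaneously a normal coordinate system for $g$ and a parallel trivialization for $\nabla$ at $x_0$, the connection coefficients of $\nabla$ vanish at $x_0$ and $\partial_{x^k}\sigma_D|_{x_0}=0$; the standard formula for the subprincipal symbol of a first-order differential operator then yields $\mathrm{Sub}(D)|_{x_0} = \psi(x_0)$ and $\{\sigma_D,\sigma_D\}|_{x_0} = 0$, so the pointwise integrand reduces to an algebraic expression in $F$, $\psi$, and $\sigma_D(\xi)=\gamma(\xi)$.

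The central step is the fiber integral over the cosphere $T^*_{1,x_0}M$. One uses $\sigma_D(\xi)^2 = g_x(\xi,\xi) = 1$ on the cosphere together with the Clifford averaging identities
\[
 \int_{T^*_{1,x_0}M}\gamma(\xi)\,d\xi = 0, \qquad \int_{T^*_{1,x_0}M}\gamma(\xi)\,A\,\gamma(\xi)\,d\xi = \frac{\mathrm{vol}(S^{d-1})}{d}\widehat{A},
\]
the second coming from $\int_{S^{d-1}}\xi_i\xi_j\,d\xi = \frac{\mathrm{vol}(S^{d-1})}{d}\delta_{ij}$ combined with the definition of $\widehat{\,\cdot\,}$. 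Terms linear in $\gamma(\xi)$ drop out by parity; the term $\sigma_D\,\mathrm{Sub}(D)\,\sigma_D = \gamma(\xi)\,\psi\,\gamma(\xi)$ yields the $\widehat\psi$ contribution, while the term $\sigma_D^2\,\mathrm{Sub}(D) = \psi$ yields an additional $\psi$ contribution. Collecting the coefficients of $F\psi$ and $F\widehat\psi$ produces a fiber integrand of the form $\tfrac12\bigl(\widehat\psi-(d-2)\psi\bigr)$, and the prefactor assembles via $\mathrm{vol}(S^{d-1}) = 2\pi^{d/2}/\Gamma(d/2)$ as
\[
 \frac{1}{2(2\pi)^d}\cdot\frac{2\pi^{d/2}}{\Gamma(d/2)} = \frac{1}{(4\pi)^{d/2}\Gamma(d/2)},
\]
giving the claimed formula.

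The principal obstacle is the careful bookkeeping of signs and conventions in the symbol calculus: each of $\mathrm{Sub}(D)$, $\mathrm{Sub}(D^2)$ and $\{F,\sigma_D\}$ is individually frame-dependent, yet the integrated quantity is intrinsic. One must verify that the frame-dependent portions of the integrand either vanish by parity on each cosphere fiber (as happens automatically for odd polynomials in $\xi$) or assemble into total divergences on $M$ which integrate to zero on the closed manifold; alternatively, one can appeal directly to the fact, noted in the excerpt after Theorem \ref{theorem 1.62}, that the combined integrand is an invariantly defined density on $M$, so that the frame-dependent pieces must cancel and the pointwise calculation at $x_0$ already determines the invariant result.
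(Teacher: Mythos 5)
Your proposal is essentially the paper's own Section~\ref{section 4} proof: specialize the subprincipal-symbol formula for $\mathscr{A}_1$ to $A=F$ and reduce the cosphere integral by Clifford averaging, using $\int_{|\xi|=1}\xi_i\xi_j\,dS(\xi)=\tfrac{\mathrm{Vol}(\mathbb{S}^{d-1})}{d}\delta_{ij}$ and $\mathrm{Vol}(\mathbb{S}^{d-1})=2\pi^{d/2}/\Gamma(d/2)$. The differences are in execution. The paper does not pass through $\mathrm{Sub}(FD)$ alone: it uses the trace property of the Wodzicki residue to write $\mathscr{A}_1(F,D)=\tfrac14\big(\mathscr{A}_1(FD,D^2)+\mathscr{A}_1(DF,D^2)\big)$, which kills the Poisson bracket identically since $\mathrm{Tr}\{F,\sigma_D\}+\mathrm{Tr}\{\sigma_D,F\}=0$; and it keeps the connection coefficients $b_j$ in normal coordinates and checks that they cancel algebraically in the final combination. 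You instead work in a synchronous frame (so $b_j(x_0)=0$) and must dispose of $\tfrac{1}{2\mathrm{i}}\{F,\sigma_D\}=-\tfrac12(\partial_kF)\gamma^k$ separately. Be careful here: this term is \emph{constant} in $\xi$, not odd, so it does not ``drop out by parity'' as your middle paragraph suggests; it survives the fiber average and is only a total divergence $\mathrm{div}(v)$ with $g(v,\cdot)=\mathrm{Tr}(F\gamma(\cdot))$, vanishing after integration over the closed $M$ --- your final paragraph's divergence mechanism is therefore not an optional alternative but the step actually needed for this term (or else use the paper's symmetrization). Relatedly, the pointwise density is \emph{not} equal to $c\,\mathrm{Tr}_E(F(\widehat\psi-(d-2)\psi)/2)$; only its integral is. One more bookkeeping point: with the paper's conventions $\sigma_D^{(0)}=\mathrm{i}\gamma^j\xi_j$ and $\widehat\psi=\gamma(e_i)\psi\gamma(e_i)$ with $\gamma(e_i)^2=-1$, the averaging identity reads $\int\sigma_D(\xi)\psi\,\sigma_D(\xi)\,dS(\xi)=-\tfrac{\mathrm{Vol}(\mathbb{S}^{d-1})}{d}\widehat\psi$; your version with a plus sign mixes the two sign conventions and would flip the sign of the $\widehat\psi$ term if applied literally. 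Finally, note the paper also gives a second, independent proof (Section~\ref{Section 3}) via the Bochner--Weitzenb\"ock decomposition $D_\epsilon^2=\Delta^{\nabla_\epsilon}+V_\epsilon$ and the Branson--Gilkey variational formula for $\int_M\mathrm{Tr}(\mathscr{H}_1(FD,D^2))$, which avoids symbol calculus entirely.
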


As immediate consequences of Theorem \ref{theorem 1.6}, one obtains
\begin{align}\label{L1D}
\mathscr{L}_1(D)=\frac{1}{(4\pi)^{d/2}\cdot\Gamma(\frac{d}{2})}\cdot\frac{\widehat{\psi}-(d-2)\psi}{2}
\end{align}
and
\begin{align}\label{114}
\mathscr{H}_1(D,D^2)=\frac{1}{(4\pi)^{d/2}}\cdot\frac{\widehat{\psi}-(d-2)\psi}{2}.
\end{align}




\begin{theorem}\label{theorem 1.3}  Let
 $D$ be a self-adjoint Dirac type operator.
 Then
${\mathscr L}_1(D)=0$ if and only if $D$ is a generalized Dirac operator.
\end{theorem}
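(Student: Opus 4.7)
The forward implication is immediate from~\eqref{L1D}: if $D$ is a generalized Dirac operator associated with some Dirac bundle $(E,\gamma,\nabla)$, then $\psi=0$ and hence $\mathscr{L}_1(D)=0$.

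For the converse, assume $\mathscr{L}_1(D)=0$. Equation~\eqref{L1D} forces the pointwise algebraic identity $\widehat{\psi}=(d-2)\psi$. My plan is to reduce to a purely algebraic statement: at each $x\in M$, analyze the linear operator $T_x\colon\mathrm{End}(E_x)\to\mathrm{End}(E_x)$ given by $T_x(\phi)=\sum_{i=1}^{d}\gamma(e_i)\phi\gamma(e_i)$ for a local orthonormal frame $\{e_i\}$ of $T_x^*M$. Using the Clifford relations, a short induction shows that on any Clifford monomial with distinct indices,
\[
T_x\bigl(\gamma(e_{i_1})\cdots\gamma(e_{i_k})\bigr)=(-1)^k(d-2k)\,\gamma(e_{i_1})\cdots\gamma(e_{i_k}),
\]
and decomposing $\mathrm{End}(E_x)$ via the Clifford-algebra action together with its commutant $\mathrm{End}_{\mathrm{Cl}}(E_x)$ yields a complete spectral decomposition of $T_x$. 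The $(d-2)$-eigenspace is then characterized by the integer solutions of $(-1)^k(d-2k)=d-2$ on $k\in\{0,1,\dots,d\}$.

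A direct check shows this equation has no solution when $d$ is odd, so $T_x-(d-2)\mathrm{id}$ is invertible on $\mathrm{End}(E_x)$ and $\psi=0$ follows at once. When $d$ is even the unique solution is $k=d-1$, forcing $\psi$ to be purely of Clifford degree $d-1$. To finish I plan to combine~(i) the freedom to modify the compatible connection by any section of $T^*M\otimes\mathrm{End}_{\mathrm{Cl}}(E)$, which shifts $\psi$ by a degree-one Clifford element, with~(ii) the self-adjointness of $\psi$ inherited from that of $D$ and the Hermiticity properties of the $\gamma(e_i)$ relative to the bundle metric, to eliminate the degree-$(d-1)$ component of $\psi$ via a suitable choice of Dirac bundle structure and conclude that $D$ is indeed a generalized Dirac operator.

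The principal difficulty lies in the even-dimensional analysis: the Hermiticity signs attached to Clifford monomials depend on the dimension modulo $4$, so a careful dimension-by-dimension case check is needed to verify that self-adjointness together with the connection-change freedom is enough to force the degree-$(d-1)$ part of $\psi$ to vanish in every even dimension.
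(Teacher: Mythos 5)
The forward direction is fine, but the converse has two genuine problems. First, your spectral analysis of $T_x(\phi)=\sum_i\gamma(e_i)\phi\gamma(e_i)$ carries the wrong sign relative to the conventions under which \eqref{L1D} is derived: the paper's $\gamma$ satisfies $\gamma(e_i)^2=-\mathrm{id}$ (the principal symbol is $\mathrm{i}\gamma^j\xi_j$), so the eigenvalue on Clifford degree $k$ is $(-1)^k(2k-d)$, not $(-1)^k(d-2k)$. A direct check with $\psi=\gamma(e_1)$ gives $\widehat\psi=(d-2)\psi$, so the $(d-2)$-eigenspace always contains the degree-one part $\gamma(\mathbb{R}^d)\otimes\mathrm{End}_{\mathrm{Cl}}(E_x)$ and is never trivial; it equals exactly that subspace in every dimension (Proposition \ref{prop 53}, where for odd $d$ the degree-$(d-1)$ piece coincides with the degree-one piece via the volume element). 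Your conclusions — ``$\psi=0$ follows at once'' for odd $d$, and ``$\psi$ is purely of degree $d-1$'' for even $d$ — are therefore both false. The first would in particular contradict the fact that a generalized Dirac operator $\gamma\widetilde\nabla$ acquires a nonzero potential when rewritten with respect to a different compatible connection, yet still has $\mathscr{L}_1=0$.

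Second, even granting your eigenspace identification, the endgame is not viable. Changing the compatible connection shifts $\psi$ by elements of the form $\gamma(e_k)L(e_k)$ with $L(e_k)$ in the commutant, i.e.\ by degree-one elements, so for even $d\geq4$ it cannot touch a degree-$(d-1)$ component; and self-adjointness does not kill degree-$(d-1)$ elements (a suitable unimodular multiple of $\gamma(e_1)\cdots\gamma(e_{d-1})$ is Hermitian). More fundamentally, the correct target of the converse is not ``$\psi=0$ for the given connection'' but the existence of a \emph{new} compatible connection $\widetilde\nabla=\nabla+L$ with $L(X)=-\tfrac12(\gamma(X)\psi+\psi\gamma(X))$ absorbing $\psi$. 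The crux — which your proposal never establishes — is that this $L(X)$ commutes with the Clifford action whenever $\widehat\psi=(d-2)\psi$; this is exactly Proposition \ref{prop 51} in the paper, and it is where the decomposition into irreducibles together with the commutant (the tensor factors $V$, resp.\ $V_1\oplus V_2$ for $d$ odd) is actually needed. Once the eigenspace is correctly identified as $\gamma(\mathbb{R}^d)$ tensored with the commutant, that commutation is immediate and no dimension-by-dimension Hermiticity analysis is required.
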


Note that in the special case $\mathrm{rk}(E)=2$ and $d=3$ formula (\ref{L1D}) implies that pointwise ${\mathscr L}_1(D)$
is proportional to the identity matrix. Hence it vanishes if and only if the local coefficient
$\mathrm{tr} ({\mathscr L}_1(D))$ vanishes (see also Example \ref{example 55}).
Then Theorem \ref{theorem 1.3} directly implies the characterization theorem in
\cite{ChervovaDV} (see Example \ref{example 56} for details). Our result can therefore be seen as a generalization of Theorem 1.2 in
\cite{ChervovaDV} to higher dimensions and a more general geometric context.

Finally, we explain how to explicitly determine $\mathscr{A}_k(A,D)$.
Recall that $P$ is a self-adjoint
Laplace type operator with spectral resolution $\{\phi_j,\lambda_j^2\}_{j=1}^{\infty}$ and $A$ is
a classical pseudodifferential operator of order $m\in\mathbb{R}$.
Similar to (\ref{Mollified asmptotics Dirac new})
one has
\begin{equation}\label{Mollified asmptotics}
 (\chi * N_A' )(\lambda) = \sum_{j=1}^{\infty} \langle A \phi_j, \phi_j \rangle \chi(\lambda-\lambda_j) \sim \sum_{k=0}^\infty \mathscr{A}_k(A,P) \lambda^{d+m-k-1}\ \ \  (\lambda\rightarrow \infty),
\end{equation}
where the microlocalized spectral counting function $N_A(\lambda)$ of $P$ is  defined as
\begin{equation} N_A(\lambda) = \sum_{\lambda_j < \lambda} \langle A \phi_j, \phi_j \rangle.
\end{equation}
Then it is straightforward to show for all non-negative integers $k$ that
\begin{equation}\label{117}\mathscr{A}_k(A,D)=\mathscr{A}_k(A\frac{\mathrm{Sign}(D)+\mathrm{Id}_E}{2},D^2),\end{equation}
which means to determine $\mathscr{A}_k(A,D)$ it suffices to do so for $\mathscr{A}_k(A,D^2)$
for arbitrary classical pseudodifferential operators $A$.
Apart from the Fourier integral operator representation method introduced before, there exist several other ways to recover the mollified counting coefficients $\mathscr{A}_k(A,P)$.
First, the microlocalized spectral zeta function $\zeta(s,A,P)$ is defined by
\begin{equation}\label{zeta}
\zeta(s,A,P)=\sum_{\lambda_j>0}\frac{\langle A\phi_j,\phi_j\rangle}{\lambda_j^s}\ \ \ (\mathrm{Re}(s)>d+m).
\end{equation}
It is well-known   (see e.g. \cite{Duistermaat Guillemin 1975,Zelditch}) that $\zeta(s,A,P)$ admits a meromorphic continuation to $\mathbb{C}$ whose only singularities are simple poles at $s=d+m-k$ ($k=0,1,2,\ldots$) with residues
$\mathscr{A}_k(A,P)$. Second, the Mellin transform of
\[ \mathrm{tr}(A e^{-t P})-\sum_{\lambda_j=0}\langle A\phi_j,\phi_j\rangle\ \ \ (t\in(0,\infty))\]
 admits a meromorphic continuation $\zeta(2s,A,P)\Gamma(s)$ to $\mathbb{C}$ whose singularities
 can be completely determined from those of $\zeta(s,A,P)$ and $\Gamma(s)$.
 After establishing suitable vertical decay estimate
 for $\zeta(2s,A,P)\Gamma(s)$, one can deduce from the inverse Mellin transform theorem the following widely used heat  expansion  (see e.g. \cite{Grubb 1997,Grubb Seeley 1995,Grubb Seeley 1996,Loya,Scott})
\begin{align}\label{heat trace asymptotics}
 \mathrm{tr}(Ae^{-t P}) \sim \sum_{k=0}^\infty\big(\mathscr{B}_k(A,P) t^{\frac{k-d-m}{2}} + \mathscr{C}_k(A,P) t^k \log(t) + \mathscr{D}_k(A,P) t^k\big),
\end{align}
where $t\rightarrow0^+$, and the relations between
the mollified counting coefficients and some of the heat coefficients can be summarized as follows:\\
Case 1: If the order $m$ of $A$ is an integer, then
\begin{itemize}
\item $\mathscr{B}_k(A,P)=\frac{\Gamma(\frac{d+m-k}{2})}{2}\cdot\mathscr{A}_k(A,P)$\ \  ($d+m-k$ is positive or negative but odd);
\item $\mathscr{C}_k(A,P)=0$\ \ ($d+m+2k<0$);
\item $\mathscr{C}_k(A,P)=\frac{(-1)^{k+1}}{2\cdot k!}\cdot\mathscr{A}_{d+m+2k}(A,P)$\ \ ($d+m+2k\geq0$).
\end{itemize}
Case 2: If the order $m$ of $A$ is not an integer, then for all non-negative integers $k$:
\begin{itemize}
\item $\mathscr{B}_k(A,P)=\frac{\Gamma(\frac{d+m-k}{2})}{2}\cdot\mathscr{A}_k(A,P)$;
\item $\mathscr{C}_k(A,P)=0$.
\end{itemize}
Thus the heat expansion (\ref{heat trace asymptotics}) contains all the information about $\{\mathscr{A}_k(A,P)\}_{k=0}^{\infty}$.
In exactly the same way, the following  resolvent trace  expansion
(see e.g. \cite{Grubb Hansen 2002,Grubb Seeley 1995,Scott})
\[
 \mathrm{tr}(A(1+tP)^{-\frac{N}{2}}) \sim \sum_{k=0}^\infty\big(\mathscr{B}_k^{(N)}(A,P)t^{\frac{k-d-m}{2}}+\mathscr{C}_k^{(N)}(A,P)t^k \log(t)+\mathscr{D}_k^{(N)}(A,P)t^k\big)
\]
also contains all the information about $\{\mathscr{A}_k(A,P)\}_{k=0}^{\infty}$, where $N$ is any complex number such that  $\mathrm{Re}(N)>\max\{d+m,0\}$. To summarize, there exist at least four different ways
(mollified counting functions, spectral zeta functions, heat expansions, resolvent trace expansions) to
 determine  the mollified counting coefficients $\mathscr{A}_k(A,P)$.
For example, using parametrix constructions in any of these methods results in the well known leading term
\begin{equation}\label{leading term}
 \mathscr{A}_0(A,P)=\frac{1}{(2\pi)^d}\int_{T_1^{\ast}M}\mathrm{Tr}(\sigma_A).
\end{equation}

In this paper we will use the Wodzicki residue
 (see e.g \cite{Guillemin Wodzicki,KV,Schrohe Residue,Scott,Wodzicki,Wodzicki 84}) to study $\mathscr{A}_k(A,P).$
On the algebra $\Psi(M;E)$ of all classical pseudo-differential operators on $C^{\infty}(M;E)$,
there exists  a trace called Wodzicki's residue or non-commutative residue which is defined by
\begin{align}
\mathrm{res}(A)&=\int_M\mathrm{res}_x(A)dx,
\end{align}
where
\begin{align}
\mathrm{res}_x(A)dx&\triangleq\Big(\frac{1}{(2\pi)^d}\int_{|\xi|=1}\mathrm{Tr}(\sigma_{-d}(A)(x,\xi))dS(\xi)\Big)dx
\end{align}
is independent of the choice of local coordinates and thus is a global density on $M$,
$\sigma_{-d}(A)(x,\xi)$ denotes the homogeneous part of degree $-d$ of the total symbol of $A$,
$dS(\xi)$ denotes the sphere measure on $\mathbb{S}^{d-1}$.
If $M$ is connected any trace $\tau$ on $\Psi(M;E)$
is a multiple of $\mathrm{res}$.
It is well-known (see e.g. \cite{Ackermann,Gil,Grubb 1997,Kalau,KV,Schrohe Residue,Scott,Zelditch}) that
Wodzicki's residues are closely related to the mollified counting coefficients as well as the heat coefficients.
The connecting formula is
 (\cite{Guillemin Wodzicki,Wodzicki 84}, see also \cite[(0.2)]{Grubb 1997}, \cite[(1.2)]{Lesch}, \cite[(1.16)]{Schrohe Residue})
\begin{equation}\label{Res C0}
\mathrm{res}(A)=-2\mathscr{C}_0(A,P),
\end{equation}
which means $\mathscr{C}_0(A,P)$ is independent of the choice of $P$.
As for other coefficients, it is  known  for all non-negative integers $k$ that (see e.g. \cite[Thm. 5.2]{Gil})
\begin{equation}\label{thm 13}
\mathscr{C}_k(A,P)=\frac{(-1)^{k+1}}{2\cdot k!}\cdot\mathrm{res}(AP^k),
\end{equation}
and (see e.g. \cite[Prop. 4.2]{KV}, \cite[P. 106]{Scott})
\begin{equation}\label{thm 14}
\mathscr{A}_k(A,P)=\mathrm{res}(AP^{\frac{k-d-m}{2}}).
\end{equation}
Thus combining (\ref{117}) with  (\ref{thm 14}), one gets
\begin{equation}\label{Ak AD}
\mathscr{A}_k(A,D)=\mathrm{res}(A\frac{D+|D|}{2}|D|^{k-d-m-1}),
\end{equation}
which is the main tool for us to establish Theorem \ref{theorem 1.62}. We should mention that any of
the previous four methods can also be used to derive Theorem \ref{theorem 1.62}, but it seems that
these methods do not provide such a clear interpretation of $\mathscr{A}_k(A,P)$ for $k\geq2$.

This paper is structured as follows.

Section \ref{section 2} first reviews the construction of the Fourier integral operator $A e^{-\mathrm{i} t P^{1/2}}$
and the stationary phase expansion of $\chi * N'_A$, then
provide as consequences complete proofs of the singularity structures of the spectral zeta functions,
the heat expansions as well as the resolvent trace expansions.
As applications,  (\ref{thm 13}) and (\ref{thm 14}) follow from suitably applying formula (\ref{Res C0}).
The local counting function of operators of Dirac and Laplace type is also studied
in the last part of this section.


Section \ref{Section 3} first reviews the concept of the Dirac bundle and the associated Dirac type operators, then gives a direct proof of Theorem \ref{theorem 1.6} by appealing to  the Bochner--Weitzenb\"{o}ck technique in Riemannian geometry.

Section \ref{section 4}
proves Theorem \ref{theorem 1.62} and Theorem \ref{theorem 1.6} by applying the Wodzicki residue method.
The connections between certain  Wodzicki residues and the sub-principal symbols of
classical pseudodifferential operators are also discussed.

Section \ref{section 5}   characterizes
 Dirac type operators with vanishing second expansion coefficient.
 As an application, the so-called massless Dirac operators are studied.

Throughout the paper we assume on smooth sections of $E$,
\begin{itemize}
\item $P$ is a self-adjoint Laplace type operator with spectral resolution $\{\phi_j,\lambda_j^2\}_{j=1}^{\infty}$,
\item $D$ is a self-adjoint Dirac type operator with spectral resolution $\{\phi_j,\mu_j\}_{j=1}^{\infty}$ unless otherwise stated in the last section,
\item $A$ is a classical (polyhomogeneous) pseudodifferential operator of order $m$,
\item  $Q$ is a partial differential operator of order $m$,
\item $F$ is a smooth endomorphism.
\end{itemize}

\section{Mollified counting coefficients I: qualitative theory}\label{section 2}

This section  first reviews the construction of the Fourier integral operator $A e^{-\mathrm{i} t P^{1/2}}$
and the stationary phase expansion  of $\chi * N'_A$, then proves many consequences.

\subsection{FIO method}
Formula (\ref{Mollified asmptotics}) essentially is Proposition 2.1 in \cite{Duistermaat Guillemin 1975}, Corollary 2.2 in \cite{Ivrii}, Theorem 2.2 in \cite{Sandoval} and  Proposition 1.1 in \cite{Zelditch},
 except they either consider scalar operators or assume $A$ is of order zero.
  Recall that $\chi \in \mathscr{S}(\mathbb{R})$ is chosen so that $\mathscr{F}\chi=1$  near the origin
and  $\mathrm{supp}(\mathscr{F}\chi)\subset (-\delta,\delta)$, where $\delta$ is smaller than half the radius
of injectivity of $M$.
 If $t$ is sufficiently small, say $|t|<\delta_1<\delta$, then locally the integral kernel $(Ae^{-\mathrm{i}tP^{1/2}})(t,x,y)$ of the operator
 $Ae^{-\mathrm{i}tP^{1/2}}$ is well known to have the form
 \[(Ae^{-\mathrm{i}tP^{1/2}})(t,x,y)=\frac{1}{(2\pi)^d}\int_{\mathbb{R}^d}a(t,x,y,\xi)e^{\mathrm{i}\theta(t,x,y,\xi)}d\xi,\]
 where $a$ is a classical (matrix-valued) symbol of order $m$. The  scalar-valued phase function
  $\theta(t,x,y,\xi)=\kappa(x,y,\xi)-t\sigma_{P^{1/2}}(y,\xi)$, where $\kappa(y,y,\xi)=0$, was   introduced by H\"{o}rmander
  (\cite{Hormander 1968}). It is also known that $\mathrm{tr}(Ae^{-\mathrm{i}tP^{1/2}})$ is smooth in $(-\delta,\delta)\backslash\{0\}$, so
 we introduce a cut-off function $\varrho \in \mathscr{S}(\mathbb{R})$ satisfying $\varrho(t)=1$ if $|t|<\frac{\delta_1}{2}$ and  $\mathrm{supp}(\varrho)\subset(-\delta_1,\delta_1)$.
Using integration by parts  one gets
 \begin{align*}
 (\chi * N_A' )(\lambda) =&\frac{1}{2\pi}\int_{\mathbb{R}}(\mathscr{F}\chi)(t)(\varrho(t)+1-\varrho(t))\mathrm{tr}(Ae^{-\mathrm{i}tP^{1/2}})e^{\mathrm{i}\lambda t}dt\\
 =&\frac{1}{(2\pi)^{d+1}}\int_{M}\int_{\mathbb{R}^d}\int_{\mathbb{R}}(\mathscr{F}\chi)(t)\varrho(t)\mathrm{Tr}
 (a(t,y,y,\xi))e^{-\mathrm{i}t\sigma_{P^{1/2}}(y,\xi)}e^{\mathrm{i}\lambda t}dyd\xi dt\\
 &+o(\lambda^{-\infty})\ \ \ (\lambda\rightarrow\infty),
 \end{align*}
where $o(\lambda^{-\infty})$ is short for $o(\lambda^{-h})$ for all positive integers $h$.
Now proceed as in \cite{Duistermaat Guillemin 1975} via the stationary phase method to get (\ref{Mollified asmptotics}).

\subsection{Finite heat expansions}\label{sub 2.2}
It is  easy to prove that
 \begin{equation}\label{formula 21}\langle T, \varphi_t\rangle=\langle \rho\ast T, \varphi_t\rangle+o(t^{\infty})\ \ \ (t\rightarrow0^+), \end{equation}
where $T$ is a tempered distribution on $\mathbb{R}$, $\varphi\in\mathscr{S}(\mathbb{R})$, $\varphi_t(\lambda)=\varphi(t\lambda)$, and $\rho\in\mathscr{S}(\mathbb{R})$ is chosen so that $\mathscr{F} \rho=1$ near the origin.
Actually, this formula appears in equivalent forms in \cite{Duistermaat Guillemin 1975,Guillemin,Zelditch}, so its proof is omitted.
As an application, one easily gets
 \begin{equation}\label{formula 22}\langle N_A', \lambda^h\varphi(t\lambda)\rangle=\langle \chi\ast N_A', \lambda^h\varphi(t\lambda)\rangle+o(t^{\infty})\ \ \ (t\rightarrow0^+), \end{equation}
where $h$ is an arbitrary non-negative integer.
Note the left hand side of (\ref{formula 22}) just is $\mathrm{tr}(AP^{\frac{h}{2}}\varphi(t\sqrt{P}))$.
Now we claim the right  hand side of (\ref{formula 22}) is of the following form
 \begin{equation}
 \sum_{k<d+m+h}\mathscr{A}_k(A,P)\cdot\int_{0}^{\infty}
 \lambda^{d+m+h-k-1}\varphi(\lambda)d\lambda \cdot t^{k-d-m-h}+o(t^{\lceil m\rceil-m-1}).
 \end{equation}
 To this end we first decompose
\begin{align*}
\langle \chi\ast N_A', \lambda^h\varphi(t\lambda)\rangle=&\int_{-\infty}^0(\chi\ast N_A')(\lambda)\cdot  \lambda^h\varphi(t\lambda)d\lambda\ +\\
&\sum_{k<d+m+h}\int_{0}^{\infty} \mathscr{A}_k(A,P) \lambda^{d+m-k-1}\cdot\lambda^h\varphi(t\lambda)d\lambda\ + \\
& \int_{0}^{\infty}\big((\chi\ast N_A')(\lambda)-\sum_{k<d+m+h}\mathscr{A}_k(A,P) \lambda^{d+m-k-1}\big)\cdot\lambda^h\varphi(t\lambda)d\lambda\\
\triangleq &\ \alpha_1(t)+\alpha_2(t)+\alpha_3(t).
\end{align*}
Since $\mathrm{supp}(N_A')\subset[0,\infty)$, it is easy to verify that
$(\chi\ast N_A')(\lambda)=o(|\lambda|^{-\infty})$ as $\lambda\rightarrow-\infty$. This implies $\alpha_1(t)=O(1)$ as $t\rightarrow0^+$.
Note also
\[\alpha_2(t)=\sum_{k<d+m+h} \mathscr{A}_k(A,P)\cdot\int_{0}^{\infty} \lambda^{d+m+h-k-1}\varphi(\lambda)d\lambda \cdot t^{k-d-m-h}.\]
For simplicity we introduce
\[f(\lambda)=\big((\chi\ast N_A')(\lambda)-\sum_{k<d+m+h}\mathscr{A}_k(A,P) \lambda^{d+m-k-1}\big)\cdot\lambda^h\ \ \ (\lambda>0)\]
 and note $\alpha_3(t)=\int_0^{\infty}f(\lambda)\varphi(t\lambda)d\lambda$. To prove the claim we have two cases to consider.\\
Case 1: Suppose $d+m+h>0$. Let $\widetilde{k}$ be the unique non-negative integer such that
$\widetilde{k}<d+m+h\leq\widetilde{k}+1$.
Let $\beta\triangleq d+m+h-\widetilde{k}-1\in(-1,0]$, which implies there exists a  constant $C_1$ such that for all $\lambda\in(0,1)$, $|f(\lambda)|\leq C_1\lambda^{\beta}$.
  According to (\ref{Mollified asmptotics}),
 there exists another  constant $C_2$ such that for all $\lambda\geq1$, $|f(\lambda)|\leq C_2\lambda^{\beta-1}\leq C_2\lambda^{-1}$.
So as $t\rightarrow0^+$,
 \begin{align*}
|\alpha_3(t)|&\leq C_1\int_0^1\lambda^{\beta}|\varphi(t\lambda)|d\lambda+C_2\int_1^{\infty}\lambda^{-1}|\varphi(t\lambda)|d\lambda\\
&=C_1\int_0^t\lambda^{\beta}|\varphi(\lambda)|d\lambda\cdot t^{-(\beta+1)}+C_2\int_t^{\infty}\lambda^{-1}|\varphi(\lambda)|d\lambda\\
&=o(t^{-(\beta+1)})+O(|\log(t)|)\\
&=o(t^{-(\beta+1)}).
 \end{align*}
Since  $-(\beta+1)=\widetilde{k}-d-m-h=\lceil m\rceil-m-1<0$, we are done in the first case.\\
Case 2: Suppose $d+m+h\leq0$.  Obviously, there is a constant $C_3$ such that for all $\lambda\in(0,1)$, $|f(\lambda)|\leq C_3$.
  According to (\ref{Mollified asmptotics}),
$(\chi\ast N_A')(\lambda)=O(\lambda^{d+m-1})$ as $\lambda\rightarrow\infty$. Thus there is a constant $C_4$ such that for all $\lambda\geq1$, $|f(\lambda)|\leq C_4\lambda^{-1}$. So as $t\rightarrow0^+$,
 \begin{align*}
|\alpha_3(t)|&\leq C_3\int_0^1|\varphi(t\lambda)|d\lambda+C_4\int_1^{\infty}\lambda^{-1}|\varphi(t\lambda)|d\lambda\\
&=O(1)+O(|\log(t)|)\\
&=O(|\log(t)|).
 \end{align*}
But note $\lceil m\rceil -m -1<0$, we are also done in the second case.

 To summarize, we have shown for all $\varphi\in\mathscr{S}(\mathbb{R})$ as $t\rightarrow0^+$ that,
 \begin{equation}\label{formula 23}
 \mathrm{tr}(AP^{\frac{h}{2}}\varphi(t\sqrt{P}))=
\sum_{k<d+m+h}\frac{\mathscr{A}_k(A,P)\int_{0}^{\infty} \lambda^{d+m+h-k-1}\varphi(\lambda)d\lambda}{\displaystyle t^{d+m+h-k}}+o(t^{\lceil m\rceil-m-1}).
 \end{equation}

\subsection{Mellin transforms}
The Mellin transform of a continuous function $f$  on $(0,\infty)$ is the function $(Mf)(s)$ of the complex variable $s$, given by
\[(Mf)(s)=\int_0^{\infty}f(t)t^{s-1}dt\]
whenever the integral is well-defined.
An open strip $\Pi(\beta_1,\beta_2)=\{s\in\mathbb{C}:\beta_1<\mathrm{Re}(s)<\beta_2\}$ is called a basic strip of $Mf$ if the integral is absolutely convergent in that strip.
Given  meromorphic functions $u,v$ defined over an open subset $\Pi$ of $\mathbb{C}$, we denote
$u\asymp v\ (s\in\Pi)$ to mean $u-v$ is analytic in $\Pi$. For example, assume $f$ is a continuous function  on $(0,\infty)$
satisfying $f(t)=o(t^{-\infty})$ as $t\rightarrow\infty$ and assume there exist real numbers $\omega_0<\omega_1<\cdots<\omega_N$ such that
\begin{equation}\label{formula 31}
f(t)=\sum_{k=0}^{N-1} a_kt^{\omega_k}+O(t^{\omega_{N}})\ \ \ (t\rightarrow0^+).
\end{equation}
Then one may easily check that $\Pi(-\omega_0,\infty)$ is a basic strip of $Mf$, and $Mf$ admits a meromorphic continuation to $\Pi(-\omega_N,\infty)$
such that
\begin{equation}\label{formula 32}
(Mf)(s)\asymp\sum_{k=0}^{N-1} \frac{a_k}{s+\omega_k}\ \ \ (s\in\Pi(-\omega_N,\infty)).
\end{equation}
Actually,  for any $s\in\Pi(\gamma_1,\gamma_2)$ with $-\omega_N<\gamma_1<\gamma_2<\infty$, one has
\[
(Mf)(s)=\sum_{k=0}^{N-1} \frac{a_k}{s+\omega_k}+\int_0^1\big(f(t)-\sum_{k=0}^{N-1} a_kt^{\omega_k}\big)t^{s-1}dt+\int_1^{\infty}f(t)t^{s-1}dt,
\]
which immediately implies (\ref{formula 32}) as well as an upper bound estimate:
\begin{equation}\label{formula upper bound}
(Mf)(s)=O(1)\ \ \ (s\in\Pi(\gamma_1,\gamma_2),\ |s|\rightarrow\infty).
\end{equation}

\begin{lemma}[\cite{Mellin}]\label{lemma 31}
Let $f$ be a continuous function over $(0,\infty)$.  Assume there exist real numbers  $\beta_1<\beta_2<\beta_3<\beta_4$ such that
\begin{itemize}
\item $\Pi(\beta_3,\beta_4)$ is a basic strip of $Mf$,
\item $Mf$ admits a meromorphic continuation to $\Pi(\beta_1,\beta_4)$ with  finite poles there,
\item $Mf\asymp\sum_{(\omega,j)}\frac{C_{\omega,j}}{(s+\omega)^{j+1}}\ \ (s\in \Pi(\beta_2,\beta_4))$,
\item $Mf$ is analytic on $\mathrm{Re}(s)=\beta_2$,
\item $(Mf)(s)=O(|s|^{-2})\ \ (s\in\Pi_{\beta_1,\beta_4}, |s|\rightarrow\infty)$.
\end{itemize}
Then
\[f(t)=\sum_{(\omega,j)}C_{\omega,j}\Big(\frac{(-1)^{j}}{j!}t^{\omega}(\log t)^{j}\Big)+O(t^{-\beta_2})\ \ \ (t\rightarrow0^+).\]
\end{lemma}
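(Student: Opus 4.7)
The plan is to invert the Mellin transform on a vertical line inside the basic strip and then shift that line leftward past all the poles, reading off the expansion from the residues that are crossed. Fix any $\sigma_0\in(\beta_3,\beta_4)$. Since $\Pi(\beta_3,\beta_4)$ is a basic strip, $(Mf)(\sigma_0+\mathrm{i}\tau)$ is well defined and, by the fifth hypothesis, $O(\tau^{-2})$ as $|\tau|\to\infty$, hence integrable in $\tau$. The substitution $t=e^{-u}$ identifies the Mellin inversion formula with ordinary Fourier inversion applied to the continuous function $e^{-\sigma_0 u}f(e^{-u})$, so
\[
 f(t)=\frac{1}{2\pi\mathrm{i}}\int_{\sigma_0-\mathrm{i}\infty}^{\sigma_0+\mathrm{i}\infty}(Mf)(s)\,t^{-s}\,ds \qquad (t>0).
\]

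Next I would push this contour to the line $\mathrm{Re}(s)=\beta_2$ along a tall rectangle with horizontal sides at $\mathrm{Im}(s)=\pm T$. Only finitely many poles sit in the intervening strip by the second hypothesis, and $Mf$ is analytic on $\mathrm{Re}(s)=\beta_2$ by the fourth. The uniform bound $(Mf)(s)=O(|s|^{-2})$ forces the two horizontal pieces to contribute zero in the limit $T\to\infty$, so the residue theorem gives
\[
 f(t)=\sum_{-\omega}\mathop{\mathrm{Res}}_{s=-\omega}\bigl((Mf)(s)\,t^{-s}\bigr)+\frac{1}{2\pi\mathrm{i}}\int_{\beta_2-\mathrm{i}\infty}^{\beta_2+\mathrm{i}\infty}(Mf)(s)\,t^{-s}\,ds,
\]
the sum ranging over the poles $-\omega$ lying in $\Pi(\beta_2,\beta_4)$. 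To compute each residue I expand $t^{-s}=t^{\omega}\sum_{n\geq 0}\frac{(-(s+\omega)\log t)^{n}}{n!}$ around $s=-\omega$ and multiply by the principal part $\sum_{j} C_{\omega,j}(s+\omega)^{-(j+1)}$ from the third hypothesis; extracting the coefficient of $(s+\omega)^{-1}$ forces $n=j$ and produces the contribution $\sum_{j} C_{\omega,j}\frac{(-1)^{j}}{j!}\,t^{\omega}(\log t)^{j}$. Summing over all pole locations reproduces the claimed main term. The residual integral is bounded in absolute value by $t^{-\beta_2}\int_{\mathbb{R}}|(Mf)(\beta_2+\mathrm{i}\tau)|\,d\tau$, which is finite by the $O(|s|^{-2})$ decay, and hence is $O(t^{-\beta_2})$ as $t\to 0^{+}$.

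The main obstacle I anticipate is justifying the initial inverse-Mellin representation rigorously, since the hypotheses constrain $Mf$ but give no direct control over the growth or decay of $f$ itself. Continuity of $f$ together with the basic-strip hypothesis alone does not make the classical inversion formula automatic, but the extra information that $(Mf)(\sigma_0+\mathrm{i}\tau)\in L^{1}(\mathbb{R}_\tau)$, which follows from the vertical decay, places the problem squarely in the $L^{1}\cap C$ Fourier-inversion setting after the change of variable $t=e^{-u}$, where the inversion identity is standard. Once this step is in place, the remainder of the argument is routine residue calculus and contour deformation, so no further analytical subtlety should arise.
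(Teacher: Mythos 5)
Your argument is correct and is precisely the standard proof of this result (the ``converse mapping'' theorem of Flajolet--Gourdon--Dumas): Mellin/Fourier inversion on a line in the basic strip, justified by the $L^1\cap C$ setting that the $O(|s|^{-2})$ decay provides, followed by a leftward contour shift collecting the residues $C_{\omega,j}\frac{(-1)^j}{j!}t^\omega(\log t)^j$ and an $O(t^{-\beta_2})$ bound on the remaining line integral. The paper itself gives no proof of Lemma \ref{lemma 31} — it is quoted from \cite{Mellin} — and your write-up matches the proof in that reference, with the one genuinely delicate point (why the inversion formula applies at all) correctly identified and resolved.
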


\subsection{Spectral zeta functions}\label{sub 2.4}

In this part we deduce the well-known singularity structures of the spectral zeta functions.
Let $h$ be a positive integer such that $d+m+h>1$, and  define
\begin{equation}\label{trace fh}
f_h(t)=\mathrm{tr}(AP^{\frac{h}{2}}e^{-tP})=\sum_{\lambda_j>0} \langle A\phi_j,\phi_j\rangle \lambda_j^he^{-t\lambda_j^2}\ \ \ (t>0).
\end{equation}
Now we have two ways to study the Mellin transform of $f_h$. First,
we list $\{\lambda_j\}_{j=1}^{\infty}$ in non-decreasing order and thus
by Weyl's law (see e.g. \cite{BGV}),
$
\lim_{j\rightarrow\infty}{\lambda_j^d}/{j}
$
exists and is  positive. It is easy to see (see e.g. \cite{Calderon,Seeley 1966}) that there exists a  constant $C=C(A,P)$ such that
$|\langle A\phi_j,\phi_j\rangle|\leq C\lambda_j^m$ whenever $\lambda_j>0$.
Consequently,
 \begin{itemize}
\item  $f_h$ is  smooth  over $(0,\infty)$ with $f_h(t)=o(t^{-\infty})$ as $t\rightarrow\infty$;
\item the spectral zeta function $\zeta(s,A,P)$ is analytic in $\Pi(d+m,\infty)$;
\item $Mf_h$ has
a basic strip  $\Pi(\frac{d+m+h}{2},\infty)$ in which
$
(Mf_h)(s)=\zeta(2s-h,A,P)\Gamma(s).
$
\end{itemize}
Second, according to (\ref{formula 23})
$f_h$ is easily seen to have the form (\ref{formula 31}):
\begin{equation}
f_h(t)=\sum_{k=0}^{N-1}\frac{\Gamma(\frac{d+m+h-k}{2})}{2}\cdot \mathscr{A}_k(A,P)\cdot t^{\frac{k-d-m-h}{2}}+O(t^{\omega_{N}})\ \ \ (t\rightarrow0^+),
\end{equation}
where $N=N_h=d+h+\lceil m\rceil-1\geq1$, $\omega_N=\frac{\lceil m\rceil-m-1}{2}<0$, $\omega_k=\omega_N-\frac{N-k}{2}=\frac{k-d-m-h}{2}$ $(k=0,1,\ldots,N-1)$.
So by (\ref{formula 32}),  $Mf_h$ initially analytically defined in $\Pi(\frac{d+m+h}{2},\infty)$,
 admits a meromorphic continuation to $\Pi(-\omega_N,\infty)$ in which
\begin{equation}\label{formula 36}
(Mf_h)(s)\asymp\sum_{k=0}^{N-1} \frac{\Gamma(\frac{d+m+h-k}{2})}{2}\cdot \frac{\mathscr{A}_k(A,P)}{s-\frac{d+m+h-k}{2}}.
\end{equation}
Considering  in $\Pi(-\omega_N,\infty)$ the Gamma function $\Gamma$ is
analytic and has no zeros,
 it is easy to  see that $\zeta(s,A,P)$ initially analytically defined in $\Pi(d+m,\infty)$,
 admits a meromorphic continuation to $\Pi(-2\omega_N-h,\infty)$ in which
\begin{equation}\label{formula 37}
\zeta(s,A,P)=\frac{(Mf_h)(\frac{s+h}{2})}{\Gamma(\frac{s+h}{2})}\asymp\sum_{k=0}^{N_h-1} \frac{\mathscr{A}_k(A,P)}{s-(d+m-k)}.
\end{equation}
Letting $h\rightarrow\infty$ we get that $\zeta(s,A,P)$ admits a meromorphic continuation to $\mathbb{C}$ whose only singularities are simple poles at $s=d+m-k$ ($k=0,1,2,\ldots$) with residues
$\mathscr{A}_k(A,P)$.

\begin{remark}
Note that if $B$ is a smoothing operator on sections of $E$, then $\zeta(s,B,P)$ is an entire
function on $\mathbb{C}$, and consequently, $\mathscr{A}_k(A,P)=\mathscr{A}_k(A+B,P)$ for all $k$.
\end{remark}

For any $q\in\mathbb{R}$, we let $P^q:C^{\infty}(M;E)\rightarrow C^{\infty}(M;E)$
be the operator defined by functional calculus of $P$ if $q$ is non-negative and by sending $\phi$ to $\sum_{\lambda_j>0}\lambda_j^{2q}\langle \phi,\phi_j\rangle\phi_j$ if $q$ is negative. By a classical result by Seeley (\cite{Seeley 1966}) the operator
$P^q$ is a classical pseudodifferential operator of order $2q$.

 \begin{Proposition}\label{Prop 22}
 For any real number $q$, ${\mathscr A}_k(A,P)={\mathscr A}_k(AP^{q},P)$ holds for all non-negative integers $k$.
 \end{Proposition}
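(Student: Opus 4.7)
The plan is to deduce this proposition as a direct corollary of the Wodzicki residue formula (\ref{thm 14}) proved earlier in the section, namely
\[
\mathscr{A}_k(A,P)=\mathrm{res}(AP^{(k-d-m)/2}).
\]
By Seeley's theorem, $P^q$ is a classical pseudodifferential operator of order $2q$, so $AP^q$ is classical of order $m+2q$. Applying (\ref{thm 14}) to the pair $(AP^q,P)$ in place of $(A,P)$ therefore yields
\[
\mathscr{A}_k(AP^q,P)=\mathrm{res}\bigl(AP^q\cdot P^{(k-d-m-2q)/2}\bigr).
\]
The composition identity $P^q\cdot P^{(k-d-m-2q)/2}=P^{(k-d-m)/2}$ (valid modulo smoothing operators, cf.\ the discussion preceding the proposition) then reduces the right-hand side to $\mathrm{res}(AP^{(k-d-m)/2})=\mathscr{A}_k(A,P)$, which is the desired equality.

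The only point requiring a small amount of care is the possible non-injectivity of $P$: with the convention adopted in the paper, $P^r$ for $r<0$ is defined as zero on $\ker P$, so $P^q\cdot P^{(k-d-m-2q)/2}$ may differ from $P^{(k-d-m)/2}$ by a finite rank projection onto $\ker P$, and similarly when $q$ and $(k-d-m-2q)/2$ have different signs. Any such finite rank correction is a smoothing operator, and $A$ composed with a smoothing operator is again smoothing, hence has vanishing Wodzicki residue. Consequently the residues on both sides agree and the identity $\mathscr{A}_k(AP^q,P)=\mathscr{A}_k(A,P)$ follows.

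Alternatively, and entirely equivalently, one can argue via the spectral zeta function. From the definition (\ref{zeta}) one reads off the shift relation
\[
\zeta(s,AP^q,P)=\sum_{\lambda_j>0}\frac{\langle A\phi_j,\phi_j\rangle\,\lambda_j^{2q}}{\lambda_j^{s}}=\zeta(s-2q,A,P),
\]
and taking residues at the pole $s=d+m+2q-k$ on the left (which corresponds to $u=d+m-k$ on the right under $u=s-2q$) produces the same conclusion. I expect no substantive obstacle here; essentially the proposition is bookkeeping built into the Wodzicki residue / zeta function calculus established in Section \ref{section 2}, and the only thing to verify carefully is the treatment of $\ker P$ described above.
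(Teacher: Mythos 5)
Your ``alternative'' argument is in fact the paper's proof: the paper establishes the pole structure of $\zeta(s,B,P)$ for an arbitrary classical $B$ of arbitrary real order in Subsection~\ref{sub 2.4}, applies it to $B=AP^{q}$ (classical of order $m+2q$ by Seeley), and combines it with the shift relation $\zeta(s,AP^{q},P)=\zeta(s-2q,A,P)$, which holds because $AP^{q}\phi_j=\lambda_j^{2q}A\phi_j$ for $\lambda_j>0$ and the sum defining $\zeta$ excludes $\ker P$ anyway. That part of your proposal is correct and complete.

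Your \emph{primary} route, however, is circular relative to the paper's logical structure, and you have also misremembered the order of the results. Formula (\ref{thm 14}), i.e.\ Proposition~\ref{prop 28}, is not ``proved earlier in the section'': it is proved \emph{later}, in the Wodzicki-residue subsection, and its proof there consists precisely of the chain
\[-2\mathscr{C}_0(AP^{\frac{k-d-m}{2}},P)=\mathscr{A}_k(AP^{\frac{k-d-m}{2}},P)=\mathscr{A}_k(A,P),\]
where the second equality is exactly Proposition~\ref{Prop 22} with $q=\frac{k-d-m}{2}$. So deducing Proposition~\ref{Prop 22} from (\ref{thm 14}) begs the question unless you import (\ref{thm 14}) as an external black box from the literature (Kontsevich--Vishik, Scott) rather than from the paper's own development; if you do import it externally, your argument is sound, and your handling of the $\ker P$ correction (finite rank, hence smoothing, hence residue zero) is the right remark to make. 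As written, though, you should lead with the zeta-function argument and present the residue identity as a consequence, not the other way around.
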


\begin{proof} Note  $AP^q$ is a classical pseudodifferential operator of order $m+2q$. Thus
$\zeta(s,AP^q,P)$ admits a meromorphic continuation to $\mathbb{C}$ whose only singularities are simple poles at $s=d+m+2q-k$ ($k=0,1,2,\ldots$) with residues
$\mathscr{A}_k(AP^q,P)$. But noting that $\zeta(s,AP^{q},P)=\zeta(s-2q,A,P)$, we must have  ${\mathscr A}_k(AP^{q},P)={\mathscr A}_k(A,P)$.
This finishes the proof of the proposition.
\end{proof}

\begin{remark} To determine ${\mathscr A}_k(A,P)$ we can assume without loss of generality  that  $A$ is of order zero
as ${\mathscr A}_k(AP^{-\frac{m}{2}},P)$,  which is ${\mathscr A}_k(A,P)$ by Proposition \ref{Prop 22}.
\end{remark}

\subsection{Full heat expansions}\label{heat expansion section}
In this part we provide another approach to the heat expansion (\ref{heat trace asymptotics}). Define a smooth function $f$ over $(0,\infty)$ by
\begin{equation}\label{trace f}
f(t)=\sum_{\lambda_j>0} \langle A\phi_j,\phi_j\rangle e^{-t\lambda_j^2}=\mathrm{tr}(Ae^{-tP})-\sum_{\lambda_j=0} \langle A\phi_j,\phi_j\rangle\ \ \ (t>0).
\end{equation}
Obviously, $\Pi(\max\{\frac{d+m}{2},0\},\infty)$ is a basic strip of $Mf$ which admits a meromorphic continuation to $\mathbb{C}$ with
$(Mf)(s)=\zeta(2s,A,P)\Gamma(s)$. Recall $\zeta(\cdot,A,P)$ is meromorphic on $\mathbb{C}$
whose only singularities are simple poles at $s=d+m-k$ ($k=0,1,2,\ldots$) with residues
$\mathscr{A}_k(A,P)$, while $\Gamma$ is meromorphic on $\mathbb{C}$  whose only singularities are simple poles at $s=-k$ ($k=0,1,2,\ldots$) with residues
$\frac{(-1)^k}{k!}$. Hence
to establish  (\ref{heat trace asymptotics}) via Lemma \ref{lemma 31} it suffices to prove
\begin{equation}\label{formula 39}
(Mf)(s)=O(|s|^{-2})\ \ \ (s\in\Pi(-n,n),\ |s|\rightarrow\infty)
\end{equation}
for all positive integers $n$.
 To this end we first assume  $s\in\Pi(-n,n)$ $(n\in\mathbb{N})$,
then let $h\geq n+2$ be a large enough integer so that one can use  (\ref{formula 37}) to get
\begin{equation}
(Mf)(s)=\zeta(2s,A,P)\Gamma(s)=\frac{(Mf_{2h})(s+h)}{\Gamma(s+h)}\Gamma(s)=(Mf_{2h})(s+h)\prod_{i=0}^{h-1}\frac{1}{s+i}.
\end{equation}
Finally by considering $s+h\in\Pi(2,n+h)$, (\ref{formula 39}) follows immediately from suitably applying (\ref{formula upper bound}) to
uniformly bound  $(Mf_{2h})(s+h)$ above for all $s\in\Pi(-n,n)$ with $|s|\rightarrow\infty$. This finishes the proof of (\ref{heat trace asymptotics}).

The precise relations between the mollified counting coefficients and some of the heat coefficients
as mentioned in the introductory part are left to the interested readers to verify.
For completeness we record these relations as a proposition for later use.
 We also remark that the above proof actually yields
 \begin{equation}\label{gamma}
\zeta(2s,A,P)\Gamma(s)=O(|s|^{-\infty})\ \ \ (s\in\Pi(-n,n),\ |s|\rightarrow\infty)
\end{equation}
for all positive integers $n$.

\begin{Proposition}\label{prop 23}
 If the order $m$ of $A$ is an integer, then
\begin{itemize}
\item $\mathscr{B}_k(A,P)=\frac{\Gamma(\frac{d+m-k}{2})}{2}\cdot\mathscr{A}_k(A,P)$\ \  ($d+m-k$ is positive or negative but odd);
\item $\mathscr{C}_k(A,P)=0$\ \ ($d+m+2k<0$);
\item $\mathscr{C}_k(A,P)=\frac{(-1)^{k+1}}{2\cdot k!}\cdot\mathscr{A}_{d+m+2k}(A,P)$\ \ ($d+m+2k\geq0$).
\end{itemize}
 If the order $m$ of $A$ is not an integer, then for all non-negative integers $k$:
\begin{itemize}
\item $\mathscr{B}_k(A,P)=\frac{\Gamma(\frac{d+m-k}{2})}{2}\cdot\mathscr{A}_k(A,P)$;
\item $\mathscr{C}_k(A,P)=0$.
\end{itemize}
\end{Proposition}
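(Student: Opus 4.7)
The plan is to derive the proposition by applying the inverse-Mellin lemma (Lemma \ref{lemma 31}) to the Mellin pair set up at the start of Section \ref{heat expansion section}: with $f(t)=\mathrm{tr}(Ae^{-tP})-\sum_{\lambda_j=0}\langle A\phi_j,\phi_j\rangle$, the identity $(Mf)(s)=\zeta(2s,A,P)\Gamma(s)$ holds on a right half-plane, and the vertical decay hypothesis of Lemma \ref{lemma 31} on every strip $\Pi(-n,n)$ is supplied by the stronger bound (\ref{gamma}) already noted at the end of that section. Thus the entire proof reduces to reading off the singularity pattern of $\zeta(2s,A,P)\Gamma(s)$ and matching it against the template of Lemma \ref{lemma 31}, keeping in mind that a Laurent contribution $C_{\omega,j}/(s+\omega)^{j+1}$ generates the summand $C_{\omega,j}\cdot\frac{(-1)^j}{j!}t^{\omega}(\log t)^j$ in $f(t)$.

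I would next list the poles of the two factors: $\zeta(2s,A,P)$ has simple poles at $s=(d+m-k)/2$ with residue $\mathscr{A}_k(A,P)/2$ ($k\geq 0$), while $\Gamma(s)$ has simple poles at $s=-\ell$ with residue $(-1)^\ell/\ell!$ ($\ell\geq 0$). The two pole sets intersect exactly when $(d+m-k)/2$ is a non-positive integer, equivalently when $m\in\mathbb{Z}$, $d+m-k$ is even, and $d+m-k\leq 0$, i.e.\ $k=d+m+2\ell$ for some $\ell\geq 0$. Away from this resonance --- either $m\notin\mathbb{Z}$, or $m\in\mathbb{Z}$ with $d+m-k>0$, or $m\in\mathbb{Z}$ with $d+m-k<0$ and odd --- the pole at $s=(d+m-k)/2$ stays simple with residue $\frac{1}{2}\Gamma(\frac{d+m-k}{2})\mathscr{A}_k(A,P)$, so Lemma \ref{lemma 31} with $\omega=(k-d-m)/2$ identifies this residue as the coefficient $\mathscr{B}_k(A,P)$ of $t^{(k-d-m)/2}$. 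This handles the $\mathscr{B}_k$ bullets in both cases and simultaneously gives $\mathscr{C}_k(A,P)=0$ when $m\notin\mathbb{Z}$. For integer $m$ with $d+m+2k<0$, no non-negative $k'$ satisfies $d+m-k'=-2k$, so the $\Gamma$-pole at $s=-k$ remains simple and contributes only to $\mathscr{D}_k$, yielding $\mathscr{C}_k(A,P)=0$ as required.

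The one genuine computation is at an integer-order resonance $k=d+m+2\ell\geq 0$, where a double pole at $s=-\ell$ arises. Expanding
\begin{equation*}
\zeta(2s,A,P)=\frac{\mathscr{A}_k(A,P)/2}{s+\ell}+\alpha+O(s+\ell),\qquad \Gamma(s)=\frac{(-1)^\ell/\ell!}{s+\ell}+\beta+O(s+\ell),
\end{equation*}
and multiplying the two Laurent series gives the coefficient $C_{\ell,1}=\frac{(-1)^\ell}{2\cdot\ell!}\mathscr{A}_{d+m+2\ell}(A,P)$ of $(s+\ell)^{-2}$. Lemma \ref{lemma 31} then produces the summand $C_{\ell,1}\cdot(-1)\cdot t^\ell\log t$ in $f(t)$, yielding $\mathscr{C}_\ell(A,P)=\frac{(-1)^{\ell+1}}{2\cdot\ell!}\mathscr{A}_{d+m+2\ell}(A,P)$ exactly as asserted; the simple-pole part of the same resonance feeds into $\mathscr{D}_\ell$, which the proposition does not ask us to compute. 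The main obstacle is purely bookkeeping: correctly pairing the sign convention $\omega\leftrightarrow -s$ of Lemma \ref{lemma 31} with the heat-expansion exponents, and being careful that pole coincidences occur only in the integer-order case. No analytic input beyond what was already assembled in Sections \ref{sub 2.4} and \ref{heat expansion section} is needed.
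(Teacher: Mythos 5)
Your proposal is correct and follows exactly the route the paper intends: Section \ref{heat expansion section} sets up the pair $(Mf)(s)=\zeta(2s,A,P)\Gamma(s)$, verifies the decay estimate (\ref{formula 39}), and explicitly leaves the matching of the poles of $\zeta(2s,A,P)\Gamma(s)$ against the template of Lemma \ref{lemma 31} "to the interested readers to verify" --- which is precisely the bookkeeping you carry out. Your residue computations at the simple and double poles, and your identification of the resonance condition $k=d+m+2\ell$ with integer $m$, are all accurate.
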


\begin{remark}
There exists an even stronger estimate than (\ref{gamma}) in \cite{Grubb Seeley 1996}. But to apply the inverse Mellin
transform theorem, a rough estimate like (\ref{gamma}) is sufficient.
\end{remark}

\subsection{Resolvent trace expansions} In this part we study the short time asymptotic expansion of the resolvent trace
$(f^{(N)}(t)\triangleq)$ $\mathrm{tr}(A(1+tP)^{-N/2})$,
where $N\in\mathbb{C}$ is  such that  $\mathrm{Re}(N)>\max\{d+m,0\}$.
Let  $M^{(N)}$ denote  the Mellin transform of
\[\mathrm{tr}(A(1+tP)^{-N/2})-\sum_{\lambda_j=0} \langle A\phi_j,\phi_j\rangle\ \ \ (t>0).\]
It is easy to verify that $M^{(N)}$ has a non-empty basic strip $\Pi(\max\{\frac{d+m}{2},0\},\frac{\mathrm{Re}(N)}{2})$
 in which
$
M^{(N)}(s)=\zeta(2s,A,P)B(s,\frac{N}{2}-s),
$
 where we recall the Beta function
 \[B(\alpha,\beta)=\int_0^1(1-t)^{\alpha-1}t^{\beta-1}dt=\int_0^{\infty}\frac{u^{\alpha-1}}{(1+u)^{\alpha+\beta}}du=\frac{\Gamma(\alpha)\Gamma(\beta)}{\Gamma(\alpha+\beta)}\ \ \ \]
is defined for $\mathrm{Re}(\alpha)>0$ and $\mathrm{Re}(\beta)>0$.
Hence $M^{(N)}$ admits a meromorphic continuation to $\Pi(-\infty,\frac{\mathrm{Re}(N)}{2})$ in which
\begin{equation}
M^{(N)}(s)=\zeta(2s,A,P)\frac{\Gamma(s)\Gamma(\frac{N}{2}-s)}{\Gamma(\frac{N}{2})}.
\end{equation}
For  any real $\beta$ in the strip $\Pi(\max\{\frac{d+m}{2},0\},\frac{\mathrm{Re}(N)}{2})$ and any positive integer $n$,
it is easy to see that
\[
\sup_{s\in\Pi(-n,\beta)}\big|\Gamma(\frac{N}{2}-s)\big|\leq\max_{\frac{\mathrm{Re}(N)}{2}-\beta \leq t\leq\frac{\mathrm{Re}(N)}{2}+n}\Gamma(t)<\infty.\]
Consequently by (\ref{formula 39}),
\begin{equation}
M^{(N)}(s)=O(|s|^{-2})\ \ \ (s\in\Pi(-n,\beta), |s|\rightarrow\infty),
\end{equation}
which immediately implies
\begin{equation}
f^{(N)}(t) \sim \sum_{k=0}^\infty \big( \mathscr{B}_{k}^{(N)}(A,P) t^{\frac{k-d-m}{2}} + \mathscr{C}_k^{(N)}(A,P) t^k \log(t) + \mathscr{D}_k^{(N)}(A,P) t^k\big)
\end{equation} via Lemma \ref{lemma 31} as the singularities of $\zeta(\cdot,A,P)$ and $\Gamma$ are known to be completely determined,
so is $M^{(N)}$. The precise relations between
the mollified counting coefficients and some of the resolvent trace coefficients are summarized in the following proposition:

\begin{Proposition}
 If the order $m$ of $A$ is an integer, then
\begin{itemize}
\item $\mathscr{B}_k^{(N)}(A,P)=\frac{\Gamma(\frac{d+m-k}{2})}{2}\cdot\frac{\Gamma(\frac{N+k-d-m}{2})}{\Gamma(\frac{N}{2})}\cdot\mathscr{A}_k(A,P)$\ \  ($d+m-k$ is positive or negative but odd);
\item $\mathscr{C}_k^{(N)}(A,P)=0$\ \ ($d+m+2k<0$);
\item $\mathscr{C}_k^{(N)}(A,P)=\frac{(-1)^{k+1}}{2\cdot k!}\cdot\frac{\Gamma(\frac{N}{2}+k)}{\Gamma(\frac{N}{2})}\cdot\mathscr{A}_{d+m+2k}(A,P)$\ \ ($d+m+2k\geq0$).
\end{itemize}
If the order $m$ of $A$ is not an integer, then for all non-negative integers $k$:
\begin{itemize}
\item $\mathscr{B}_k^{(N)}(A,P)=\frac{\Gamma(\frac{d+m-k}{2})}{2}\cdot\frac{\Gamma(\frac{N+k-d-m}{2})}{\Gamma(\frac{N}{2})}\cdot\mathscr{A}_k(A,P)$;
\item $\mathscr{C}_k^{(N)}(A,P)=0$.
\end{itemize}
\end{Proposition}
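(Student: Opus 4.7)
The plan is to read off the asymptotic expansion of $f^{(N)}(t)$ directly from the Mellin transform identity
\[M^{(N)}(s)=\zeta(2s,A,P)\,\frac{\Gamma(s)\,\Gamma(\frac{N}{2}-s)}{\Gamma(\frac{N}{2})}\]
together with the vertical decay estimate $M^{(N)}(s)=O(|s|^{-2})$ on strips $\Pi(-n,\beta)$ already established just before the statement, by invoking the inverse Mellin Lemma \ref{lemma 31}. All that remains is the pole-counting bookkeeping: expand each factor in a Laurent series around the possible pole locations, multiply, and extract the coefficients of $(s+\omega)^{-1}$ (giving $t^\omega$ terms) and $(s+\omega)^{-2}$ (giving $-t^\omega\log t$ terms by Lemma \ref{lemma 31} with $j=1$).

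In $\Pi(-\infty,\mathrm{Re}(N)/2)$ the factor $\Gamma(\frac{N}{2}-s)$ is analytic, so the only singularities of $M^{(N)}$ come from poles of $\zeta(2s,A,P)$ at $s=(d+m-k)/2$ with residue $\mathscr{A}_k(A,P)/2$, and poles of $\Gamma(s)$ at $s=-k$ with residue $(-1)^k/k!$. When these two pole families do not collide, i.e.\ when $(d+m-k)/2$ is not a non-positive integer, each contributes only a simple pole. The simple pole at $s=(d+m-k)/2$ yields
\[M^{(N)}(s)\asymp\frac{\mathscr{A}_k(A,P)}{2}\cdot\frac{\Gamma(\frac{d+m-k}{2})\,\Gamma(\frac{N+k-d-m}{2})}{\Gamma(\frac{N}{2})}\cdot\frac{1}{s-\frac{d+m-k}{2}},\]
and Lemma \ref{lemma 31} produces exactly the claimed $\mathscr{B}_k^{(N)}(A,P)\,t^{(k-d-m)/2}$ term.

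When $m\in\mathbb{Z}$ and $d+m+2k\geq 0$, the pole of $\zeta(2s,A,P)$ at $s=-k$ collides with the pole of $\Gamma(s)$ and produces a \emph{double} pole. Multiplying the leading singular terms $\frac{\mathscr{A}_{d+m+2k}(A,P)/2}{s+k}$ and $\frac{(-1)^k/k!}{s+k}$ by the regular value $\Gamma(\frac{N}{2}+k)/\Gamma(\frac{N}{2})$ of the remaining factor, reading off the coefficient of $(s+k)^{-2}$, and applying Lemma \ref{lemma 31} with $j=1$ (whose $(-1)^j/j!$ supplies the extra sign) yields precisely
\[\mathscr{C}_k^{(N)}(A,P)=\frac{(-1)^{k+1}}{2\cdot k!}\cdot\frac{\Gamma(\frac{N}{2}+k)}{\Gamma(\frac{N}{2})}\cdot\mathscr{A}_{d+m+2k}(A,P).\]
When either $m\notin\mathbb{Z}$ or $d+m+2k<0$ no collision occurs, so the $\Gamma(s)$-poles at $s=-k$ are simple and contribute only to the regular $t^k$ coefficients $\mathscr{D}_k^{(N)}$; consequently $\mathscr{C}_k^{(N)}=0$, completing both cases.

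The only even mildly delicate step is the sign accounting in the double-pole case: the factor $(-1)^k$ from $\mathrm{Res}_{s=-k}\Gamma(s)$ combines with the $-1$ from Lemma \ref{lemma 31} to produce the correct $(-1)^{k+1}$. Apart from that the derivation is purely algebraic and parallels the proof of Proposition \ref{prop 23}, with the single factor $\Gamma(s)$ in the heat integrand replaced by $\Gamma(s)\Gamma(\frac{N}{2}-s)/\Gamma(\frac{N}{2})$; the extra analytic factor $\Gamma(\frac{N}{2}-s)$ is exactly what contributes the modifiers $\Gamma(\frac{N+k-d-m}{2})/\Gamma(\frac{N}{2})$ and $\Gamma(\frac{N}{2}+k)/\Gamma(\frac{N}{2})$ visible in the final formulas.
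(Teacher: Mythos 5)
Your proposal is correct and follows exactly the route the paper takes: the Mellin identity $M^{(N)}(s)=\zeta(2s,A,P)\Gamma(s)\Gamma(\tfrac{N}{2}-s)/\Gamma(\tfrac{N}{2})$, the $O(|s|^{-2})$ decay on vertical strips, and the inverse Mellin Lemma \ref{lemma 31}, with the coefficients read off from the simple and double poles (the paper leaves this bookkeeping to the reader, and your accounting of the residue $\mathscr{A}_k/2$ from the substitution $s\mapsto 2s$, the factor $(-1)^k/k!$ from $\Gamma$, and the extra $-1$ from Lemma \ref{lemma 31} is accurate). No gaps.
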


\subsection{Wodzicki residues}

This part is devoted to the proofs of (\ref{thm 13}) and (\ref{thm 14})
and their consequences. Both formulae are known and we state them here as two propositions for completeness.
Also, we refer the readers to the original references  for the proof of the crucial formula (\ref{Res C0}).

\begin{Proposition}\label{prop 28} $\mathscr{A}_k(A,P)=\mathrm{res}(AP^{\frac{k-d-m}{2}})$.
\end{Proposition}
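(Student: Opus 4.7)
The plan is to reduce the identity $\mathscr{A}_k(A,P)=\mathrm{res}(AP^{(k-d-m)/2})$ to the fundamental residue--trace identity (\ref{Res C0}), combined with the index-shift Proposition \ref{Prop 22} and the log-coefficient formula of Proposition \ref{prop 23}. The crucial observation is that, although $A$ has arbitrary real order $m$, the operator $B := AP^{(k-d-m)/2}$, well-defined as a classical pseudodifferential operator by Seeley's theorem, has order $m+(k-d-m)=k-d$, which is an integer for every non-negative integer $k$. This places $B$ squarely within Case~1 of Proposition \ref{prop 23}, and it is precisely this shift into the integer-order regime that makes the argument go through.

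First I would apply (\ref{Res C0}) with $A$ replaced by $B$ to obtain $\mathrm{res}(B)=-2\,\mathscr{C}_0(B,P)$. Next I would invoke the third bullet of Case~1 of Proposition \ref{prop 23} with summation index equal to zero and the same substitution: the hypothesis $d+(k-d)+2\cdot 0=k\ge 0$ is automatic, and the conclusion reads $\mathscr{C}_0(B,P)=-\tfrac{1}{2}\,\mathscr{A}_k(B,P)$. Combining these two identities yields $\mathrm{res}(B)=\mathscr{A}_k(B,P)$. Finally, I would apply Proposition \ref{Prop 22} with $q=(k-d-m)/2$ to convert $\mathscr{A}_k(B,P)$ back to $\mathscr{A}_k(A,P)$, completing the chain.

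The main obstacle is essentially conceptual rather than technical: one has to recognise that multiplying $A$ by a suitable fractional power of $P$ is precisely the device that both shifts the order of the resulting operator into the integer regime and aligns the relevant pole of $\zeta(\cdot,A,P)$ with the $\mathscr{C}_0$-coefficient, which is exactly where the Wodzicki residue lives by (\ref{Res C0}). Once this bookkeeping is clear, all three ingredients are already available from the earlier parts of Section~\ref{section 2}, and the argument collapses into three one-line applications.
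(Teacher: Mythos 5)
Your proof is correct and coincides with the paper's own argument: both introduce $B=AP^{\frac{k-d-m}{2}}$ of integer order $k-d$, apply the third bullet of Case~1 of Proposition \ref{prop 23} with $d+(k-d)+2\cdot 0=k\ge 0$ to get $-2\mathscr{C}_0(B,P)=\mathscr{A}_k(B,P)$, use Proposition \ref{Prop 22} to identify this with $\mathscr{A}_k(A,P)$, and conclude via (\ref{Res C0}). No differences worth noting.
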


\begin{proof}
 Since $AP^{\frac{k-d-m}{2}}$ is of integer order $k-d$ and $d+(k-d)+2\cdot0=k\geq0$, it follows first from Proposition \ref{prop 23} then from Proposition \ref{Prop 22} that
\[-2\mathscr{C}_0(AP^{\frac{k-d-m}{2}},P)=\mathscr{A}_k(AP^{\frac{k-d-m}{2}},P)=\mathscr{A}_k(A,P).\]
But according to (\ref{Res C0}) we also have
\[-2\mathscr{C}_0(AP^{\frac{k-d-m}{2}},P)=\mathrm{res}(AP^{\frac{k-d-m}{2}}).\]
Combining the above two formulae yields the desired result.
\end{proof}

\begin{Proposition}\label{prop 29} $\mathscr{C}_k(A,P)=\frac{(-1)^{k+1}}{2\cdot k!}\cdot\mathrm{res}(AP^k)$.
\end{Proposition}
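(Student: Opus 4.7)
The plan is to deduce this statement directly from the machinery built in the preceding subsections, in much the same spirit as the proof of Proposition \ref{prop 28} above. The key observation is that the right-hand side $\mathrm{res}(AP^k)$ is itself controlled by the $\mathscr{C}_0$-coefficient of $AP^k$ via the basic identity (\ref{Res C0}), namely $\mathrm{res}(AP^k) = -2\mathscr{C}_0(AP^k, P)$, and both $\mathscr{C}_0(AP^k, P)$ and $\mathscr{C}_k(A, P)$ can then be matched through the bookkeeping supplied by Propositions \ref{Prop 22} and \ref{prop 23}.

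In the principal case where $m$ is an integer and $d+m+2k\geq 0$, Proposition \ref{prop 23} applied to $AP^k$ (of integer order $m+2k$) at index $0$ yields
\[
\mathscr{C}_0(AP^k, P) = -\tfrac{1}{2}\,\mathscr{A}_{d+m+2k}(AP^k, P),
\]
which by Proposition \ref{Prop 22} equals $-\tfrac{1}{2}\,\mathscr{A}_{d+m+2k}(A, P)$; hence (\ref{Res C0}) gives $\mathrm{res}(AP^k)=\mathscr{A}_{d+m+2k}(A, P)$. On the other hand, Proposition \ref{prop 23} applied to $A$ at index $k$ gives $\mathscr{C}_k(A, P)=\tfrac{(-1)^{k+1}}{2\cdot k!}\,\mathscr{A}_{d+m+2k}(A, P)$, and these two identities combine to the desired formula.

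In the remaining cases (either $m\notin\mathbb{Z}$, or $m\in\mathbb{Z}$ with $d+m+2k<0$), Proposition \ref{prop 23} produces $\mathscr{C}_k(A, P)=0$ directly on one side. On the other side, one notes that the order $m+2k$ of $AP^k$ is non-integer in the first subcase and an integer $<-d$ in the second, so Proposition \ref{prop 23} applied to $AP^k$ at index $0$ also gives $\mathscr{C}_0(AP^k, P)=0$, and therefore $\mathrm{res}(AP^k)=0$ via (\ref{Res C0}). Both sides of the claim thus vanish trivially, and the identity holds uniformly.

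No serious obstacle is anticipated: all the analytic content—the meromorphic structure of $\zeta(s,A,P)$, the inverse Mellin argument producing Proposition \ref{prop 23}, Seeley's theorem embedded in Proposition \ref{Prop 22}, and the deep Wodzicki–Guillemin identity (\ref{Res C0})—has already been absorbed into the results being invoked, so the present proposition reduces to a short algebraic manipulation together with a clean case separation paralleling the proof of Proposition \ref{prop 28}.
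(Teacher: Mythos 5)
Your argument is correct and follows essentially the same route as the paper: reduce both sides to $\mathscr{A}_{d+m+2k}(A,P)$ via Proposition \ref{prop 23}, with the degenerate cases ($m\notin\mathbb{Z}$, or $d+m+2k<0$) handled by showing both sides vanish. The only cosmetic difference is that where the paper simply cites Proposition \ref{prop 28} to get $\mathscr{A}_{d+m+2k}(A,P)=\mathrm{res}(AP^k)$ (and notes $\mathrm{res}(AP^k)=0$ by definition in the degenerate cases), you re-derive that instance from (\ref{Res C0}) together with Propositions \ref{Prop 22} and \ref{prop 23}, which is exactly how Proposition \ref{prop 28} is proved in the paper.
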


\begin{proof}
If $A$ is of integer order $m\geq -d-2k$, then it follows first from Proposition \ref{prop 23} then from Proposition \ref{prop 28} that
\[\mathscr{C}_k(A,P)=\frac{(-1)^{k+1}}{2\cdot k!}\cdot\mathscr{A}_{d+m+2k}(A,P)=\frac{(-1)^{k+1}}{2\cdot k!}\cdot\mathrm{res}(AP^k).\]
If $A$ is not of integer order or $A$ is of integer order $m<-d-2k$, then it follows from definition that
$\mathrm{res}(AP^k)=0$ and from Proposition \ref{prop 23} that $\mathscr{C}_k(A,P)=0$. This finishes the proof.
\end{proof}

Next we study the property $\mathscr{A}_k(A,P)=0$ for certain operators $A\in\Psi(M;E)$.
We denote by $\Psi_{\mathrm{odd}}^{\mathbb{Z}}(M;E)$ the space of odd-class pseudodifferential operators
and by $\Psi_{\mathrm{even}}^{\mathbb{Z}}(M;E)$ the space of even-class pseudodifferential operators on sections of $E$, that is,
$A\in\Psi_{\mathrm{odd}}^{\mathbb{Z}}(M;E)$ if in local coordinates its symbol $\sum_{j\geq0}\sigma_{m-j}(A)$ $(m\in\mathbb{Z})$ satisfies
\begin{equation}
\sigma_{m-j}(A)(x,-\xi)=(-1)^{m-j}\sigma_{m-j}(A)(x,\xi)
\end{equation}
for all $x,\xi$ and $j$, while $A\in\Psi_{\mathrm{even}}^{\mathbb{Z}}(M;E)$ if
in local coordinates
\begin{equation}
\sigma_{m-j}(A)(x,-\xi)=(-1)^{m-j+1}\sigma_{m-j}(A)(x,\xi)
\end{equation}
for all $x,\xi$ and $j$. For example, any partial differential operator is odd-class while $P^{1/2}$ is even-class.
An operator $A\in\Psi_{\mathrm{odd}}^{\mathbb{Z}}(M;E)\cup\Psi_{\mathrm{even}}^{\mathbb{Z}}(M;E)$ is said of regular parity class if
its parity class agrees with that of $d$. It is easy to verify  that $\mathrm{res}(A)=0$ if $A$ is of
regular parity class.

\begin{Proposition} Let $A\in\Psi_{\mathrm{odd}}^{\mathbb{Z}}(M;E)\cup\Psi_{\mathrm{even}}^{\mathbb{Z}}(M;E)$.
If the parity class of $A$ agrees with that of $k-m$, then $\mathscr{A}_k(A,P)=0$.
\end{Proposition}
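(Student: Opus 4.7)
The plan is to apply Proposition~\ref{prop 28} to rewrite $\mathscr{A}_k(A,P) = \mathrm{res}(A P^{(k-d-m)/2})$ and then show, under the hypothesis, that the operator $B := AP^{q/2}$ (with $q := k-d-m \in \mathbb{Z}$) is of regular parity class, so that the vanishing of $\mathrm{res}$ on such operators (recalled immediately before the statement) yields the conclusion.

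The first task is to check that parity classes compose multiplicatively modulo $2$. A direct examination of the local symbol composition formula shows that under $\xi\mapsto -\xi$ each summand $\frac{1}{\alpha!}\partial_\xi^\alpha \sigma_{m_X-i}(X)\cdot D_x^\alpha \sigma_{m_Y-l}(Y)$ of $\sigma(XY)_{m_X+m_Y-j}$ picks up the sign $(-1)^{m_X+m_Y-j}$ when $X,Y$ are both odd-class or both even-class, and $(-1)^{m_X+m_Y-j+1}$ otherwise; the sign $(-1)^{|\alpha|}$ produced by the $\xi$-derivative is precisely absorbed by the homogeneity-degree reduction from $m_X-i$ to $m_X-i-|\alpha|$. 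In particular, odd-class times odd-class and even-class times even-class are both odd-class, whereas odd-class times even-class is even-class. Since $P^{1/2}$ is even-class and, by Seeley's theorem, $P^{q/2}$ is a classical pseudodifferential operator of order $q$ for every $q\in\mathbb{Z}$, iterating the above rule shows that $P^{q/2}$ is odd-class when $q$ is even and even-class when $q$ is odd.

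Combining these facts, the parity class of $B$ is completely determined by the parity class of $A$ and the parity of $q$. Since $q+d=k-m$, the hypothesis (that the parity class of $A$ agrees with that of $k-m$) translates to the statement that the parity class of $A$ agrees with the parity of $q+d$, and a short case check on the parity of $d$ then shows that $B$ is odd-class precisely when $d$ is odd and even-class precisely when $d$ is even; that is, $B$ is of regular parity class. Therefore $\mathrm{res}(B)=0$, which finishes the proof. The one technical nuisance is the treatment of $P^{q/2}$ for negative integer $q$ when $P$ has nontrivial kernel: the spectral generalized inverse used to define $P^{q/2}$ must be compared with Seeley's complex power, but the two differ only by a smoothing operator, which is invisible both to the homogeneous symbol expansion that determines the parity class and to the Wodzicki residue. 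Once this is granted, the rest of the argument is straightforward sign bookkeeping modulo $2$.
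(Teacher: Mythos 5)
Your proposal is correct and takes essentially the same route as the paper: both reduce to $\mathscr{A}_k(A,P)=\mathrm{res}(AP^{\frac{k-d-m}{2}})$ via Proposition \ref{prop 28}, show that $AP^{\frac{k-d-m}{2}}$ is of regular parity class by the mod-$2$ additivity of the parity under composition, and conclude from the vanishing of the Wodzicki residue on regular parity class operators. The only difference is that the paper simply cites Ponge for the rule $\tau(AP^{\frac{k-d-m}{2}})=\tau(A)+k-d-m\ (\mathrm{mod}\ 2)$, whereas you verify it directly from the symbol composition formula (including the smoothing discrepancy for negative powers); the one spot you gloss over is that for negative $q$ the parity of $P^{q/2}$ rests on the standard fact that the parametrix of an even-class elliptic operator is again even-class, which follows from the same recursion.
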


\begin{proof} We define a map $\tau$ by sending odd-class pseudodifferential operators to 0 and even-class ones to 1. Then (see \cite[Prop. 1.11]{Ponge})
$\tau(AP^{\frac{k-d-m}{2}})=\tau(A)+k-d-m$ $(\mathrm{mod}\ 2)$.
Thus if the parity class of $A$ agrees with that of $k-m$,
or equivalently, the parity class of $\tau(AP^{\frac{k-d-m}{2}})$ does not agree with that of $d$, or further equivalently, the parity class of  $AP^{\frac{k-d-m}{2}}$ agrees with that of $d$,
then $AP^{\frac{k-d-m}{2}}$ is of regular parity class and consequently by Proposition \ref{prop 28}, $\mathscr{A}_k(A,P)=\mathrm{res}(AP^{\frac{k-d-m}{2}})=0$.
\end{proof}

\begin{remark} The heat expansion (\ref{heat trace asymptotics}) can be written in a more rigorous way as
\begin{align}\label{trace 2}
 \mathrm{tr}(Ae^{-t P}) \sim \sum_{j\in\Lambda}\widetilde{\mathscr{B}}_j(A,P) t^{\frac{j-d-m}{2}}+\sum_{k=0}^\infty\big(\widetilde{\mathscr{C}}_k(A,P) t^k \log(t) + \widetilde{\mathscr{D}}_k(A,P) t^k\big),
\end{align}
where $\Lambda$ denotes the set of non-negative integers $j$ such that $\frac{j-d-m}{2}$ is not a non-negative integer.
In this notation system $\widetilde{\mathscr{B}}_j(A,P)$, $\widetilde{\mathscr{C}}_k(A,P)$ and $\widetilde{\mathscr{D}}_k(A,P)$ are uniquely determined.
According to Propositions \ref{prop 23}, \ref{prop 28}, \ref{prop 29},
$\widetilde{\mathscr{B}}_j(A,P)$ and $\widetilde{\mathscr{C}}_k(A,P)$ are certain Wodzicki residues.
In most situations, $\widetilde{\mathscr{D}}_k(A,P)$ are not Wodzicki residues but operator traces.
For the simplest example that $A$ is smoothing operator, one can (regard $A$ as of any real order and thus) deduce from (\ref{trace 2}) that
\begin{align}
 \mathrm{tr}(Ae^{-t P}) \sim \sum_{k=0}^\infty \widetilde{\mathscr{D}}_k(A,P) t^k.
\end{align}
In this situation it is so natural to expect $\mathrm{tr}(A)=\widetilde{\mathscr{D}}_0(A,P)$. Actually,
if any of the following three cases happens,
then $\widetilde{\mathscr{D}}_0(A,P)$ just is the Kontsevich-Vishik trace of $A$ which is independent of the choice of $P$
(see e.g. \cite{Grubb Schrohe,KV,Maniccia}):
\begin{itemize}
\item $m<-d$;
\item $m$ is not an integer;
\item $A$ is of regular parity class.
\end{itemize}
Since the Kontsevich-Vishik trace agrees with the standard $L^2$-operator trace on trace class operators,
the previous expectation is confirmed.
Furthermore, if $A=Q$ is a partial differential operator and if $d$ is even,
then one can express $\widetilde{\mathscr{D}}_0(A,P)$ as the Wodzicki residue of certain
pseudodifferential operator with log-polyhomogeneous symbol (see \cite{Mickelsson,Paycha,Scott 2005} for details).
\end{remark}


\subsection{Dirac operators}
This part is devoted to the proofs of (\ref{Mollified asmptotics Dirac new}), (\ref{117}) and (\ref{Ak AD}).
First, we introduce a classical pseudodifferential operator $B$ of order $m$ by defining
$B=A\frac{\mathrm{Sign}(D)+\mathrm{Id}_E}{2}$.
Since $\mathrm{supp}(N_A'-N_B')\subset(-\infty,0]$, we get (see also Subsection \ref{sub 2.2})
\[(\chi\ast N_A')(\mu)-(\chi\ast N_B')(\mu)=o(\mu^{-\infty})\ \ \ (\mu\rightarrow\infty).\]
By appealing to (\ref{Mollified asmptotics}) with $A,P,\lambda,\lambda_j$ replaced respectively by $B,D^2,\mu,|\mu_j|$, we obtain as $\mu\rightarrow\infty$ that
\begin{align*}
(\chi\ast N_B')(\mu)&=\sum_{j=1}^{\infty}\langle B\phi_j,\phi_j\rangle\chi(\mu-\mu_j)\\
&=\sum_{j=1}^{\infty}\langle B\phi_j,\phi_j\rangle\chi(\mu-|\mu_j|)\\
&\sim\sum_{k=0}^{\infty}\mathscr{A}_k(B,D^2)\mu^{d+m-k-1},
\end{align*}
which proves (\ref{Mollified asmptotics Dirac new}) as well as $\mathscr{A}_k(A,D)=\mathscr{A}_k(B,D^2)$ for all non-negative integers $k$.
Obviously by setting $A=F$, (\ref{117}) is a special case of $\mathscr{A}_k(A,D)=\mathscr{A}_k(B,D^2)$.
Now it follows from Proposition \ref{prop 28} that
\begin{equation}\label{219}
\mathscr{A}_k(A,D)=\mathrm{res}(A\frac{\mathrm{Sign}(D)+\mathrm{Id}_E}{2}|D|^{k-d-m})=\mathrm{res}(A\frac{D+|D|}{2}|D|^{k-d-m-1}),
\end{equation}
which proves (\ref{Ak AD}).

Next, we give an  equivalent expression for $\mathscr{A}_k(A,D)$.
 By Proposition \ref{Prop 22} one gets
\begin{equation}\label{220}
\mathscr{A}_k(A,D)=\mathscr{A}_k(B,D^2)=\frac{\mathscr{A}_k(A,D^2)+\mathscr{A}_k(AD,D^2)}{2}.
\end{equation}

Similar to the microlocalized spectral zeta function (\ref{zeta}), one can introduce the microlocalized spectral eta function (see e.g. \cite{Branson Gilkey,GSmith})  by defining
\begin{equation}
\eta(s,A,D)=\sum_{\mu_j\neq 0}\frac{\langle A\phi_j,\phi_j\rangle}{|\mu_j|^s}\cdot\mathrm{Sign}(\mu_j)\ \ \ (\mathrm{Re}(s)>d+m).
\end{equation}
But it is easy to see that \begin{equation}\eta(s,A,D)=\zeta(s,A\cdot\mathrm{Sign}(D),D^2).\end{equation}
So according to Proposition \ref{prop 28}, $\eta(s,A,D)$
admits a meromorphic continuation to $\mathbb{C}$ whose only singularities are simple poles at $s=d+m-k$ ($k=0,1,2,\ldots$) with residues
$\mathrm{res}(AD|D|^{k-d-m-1})$.

\subsection{Local counting functions}\label{sub 28} This part is devoted to the proof of (\ref{111}) and its consequences.
Recall $Q$ denotes a partial differential operator of order $m$
 acting  on $C^{\infty}(M;E)$.
Combining (\ref{kernel}) $\sim$ (\ref{heat kernel expansion}) yields
\begin{equation}\label{222}
\mathrm{tr}(Qe^{-tD^2})\sim\sum_{k=0}^{\infty}\int_M\mathrm{Tr}_E(\mathscr{H}_k(Q,D^2))\cdot t^{\frac{k-d-m}{2}}\ \ \ (t\rightarrow0^+),
\end{equation}
which compared with the heat expansion (\ref{heat trace asymptotics}) gives
  \begin{itemize}
\item $\mathscr{B}_k(Q,D^2)=\int_M\mathrm{Tr}_E(\mathscr{H}_k(Q,D^2))$\   ($d+m-k$ is positive or negative but odd);
\item ${\mathscr C}_k(Q,D^2)=0$.
\end{itemize}
According to Proposition \ref{prop 23} we also have
 \begin{itemize}
\item $\mathscr{B}_k(Q,D^2)=\frac{\Gamma(\frac{d+m-k}{2})}{2}\mathscr{A}_k(Q,D^2)$\   ($d+m-k$ is positive or negative but odd);
\item $\mathscr{C}_k(Q,D^2)=\frac{(-1)^{k+1}}{2\cdot k!}\mathscr{A}_{d+m+2k}(Q,D^2)$\ \ ($d+m+2k\geq0$).
\end{itemize}
Thus for all non-negative integers $k$,
\begin{equation}\label{223}
\mathscr{A}_k(Q,D^2)=\frac{2}{\Gamma(\frac{d+m-k}{2})}\int_M\mathrm{Tr}_E(\mathscr{H}_k(Q,D^2)).
\end{equation}
By combining (\ref{17}), (\ref{220}), (\ref{223}) and noting $\mathscr{H}_k(FQ,D^2)=F\mathscr{H}_k(Q,D^2)$, we get
\begin{align}
\int_M\mathrm{Tr}_E(F\mathscr{L}_k(D))&=\mathscr{A}_k(F,D)
=\frac{\mathscr{A}_k(F,D^2)+\mathscr{A}_k(FD,D^2)}{2}\label{224}\\
&=\int_M\mathrm{Tr}_E\Big(F\big(\frac{\mathscr{H}_k(\mathrm{Id}_E,D^2)}{\Gamma(\frac{d-k}{2})}
+\frac{\mathscr{H}_k(D,D^2)}{\Gamma(\frac{d+1-k}{2})}\big)\Big),\nonumber
\end{align}
which proves (\ref{111}) as $F$ can be any smooth endomorphism of $E$. To be precise, one can set $F$ to be the adjoint of
\[\mathscr{L}_k(D)-\big(\frac{\mathscr{H}_k(\mathrm{Id}_E,D^2)}{\Gamma(\frac{d-k}{2})}
+\frac{\mathscr{H}_k(D,D^2)}{\Gamma(\frac{d+1-k}{2})}\big)\]
then apply (\ref{224}) to conclude (\ref{111}).

Next, we study the local counting function of Laplace type operators but first have a look at that of
Dirac type operators.
According to (\ref{16}),
\[
\sum_{j=1}^{\infty}\Phi_j\chi(\mu\mp\mu_j)\sim\ \sum_{k=0}^{\infty}\mu^{d-k-1}\mathscr{L}_k(\pm D)\ \ \ (\mu\rightarrow\infty),
\]
which easily implies (see also \cite{Ivrii})
\[
\sum_{j=1}^{\infty}\Phi_j\chi(\mu-|\mu_j|)\sim\ \sum_{k=0}^{\infty}\mu^{d-k-1}(\mathscr{L}_k(D)+\mathscr{L}_k(-D))\ \ \ (\mu\rightarrow\infty).
\]
As an application of (\ref{111}), one gets $\mathscr{L}_k(D)+\mathscr{L}_k(-D)=\frac{2\mathscr{H}_k(\mathrm{Id}_E,D^2)}{\Gamma(\frac{d-k}{2})}$. Thus
\begin{equation}
\sum_{j=1}^{\infty}\Phi_j\chi(\mu-|\mu_j|)\sim\ \sum_{k=0}^{\infty}\mu^{d-k-1}\frac{2\mathscr{H}_k(\mathrm{Id}_E,D^2)}{\Gamma(\frac{d-k}{2})}\ \ \ (\mu\rightarrow\infty).
 \end{equation}
Generally for Laplace type operators $P$, one can similarly establish that
\begin{equation}
\sum_{j=1}^{\infty}\Phi_j\chi(\lambda-\lambda_j)\sim\ \sum_{k=0}^{\infty}\lambda^{d-k-1}\frac{2\mathscr{H}_k(\mathrm{Id}_E,P)}{\Gamma(\frac{d-k}{2})}\ \ \ (\lambda\rightarrow\infty).
 \end{equation}

\section{The Bochner-Weitzenb\"{o}ck technique and a direct proof of Theorem \ref{theorem 1.6}}\label{Section 3}

In this section we will introduce some basic notations about Dirac operators and
give a direct proof of Theorem \ref{theorem 1.6} by employing the Bochner-Weitzenb\"{o}ck technique  in
 Riemannian geometry (see e.g. \cite{BGV,Gilkey 1975,Gilkey AFSG,Lawson}).

\subsection{Bochner-Weitzenb\"{o}ck technique}

We begin by reviewing the concept of the Dirac bundle   $(E,\gamma,\widetilde{\nabla})$ (see e.g. \cite{Lawson}),
where $\gamma$ is a Clifford module structure on $E$ and $\widetilde{\nabla}$ is a connection on $E$ that is compatible with $\gamma$.

First, let $\mathrm{Cl}(TM)$ be the universal unital complex algebra bundle generated
by the tangent bundle $TM$ subject to the commutation relation
$X\ast Y+Y\ast X=-2g({X,Y})$,
where $X,Y\in C^{\infty}(M;TM)$ and $\ast$ denotes the algebra operation.
A Clifford module structure on  $E$ just is a unital algebra morphism
$\gamma:\mathrm{Cl}(TM)\rightarrow\mbox{End}(E)$.
Given a connection $\widehat{\nabla}$ on the Clifford module $E=(E,\gamma)$,
$
\gamma\widehat{\nabla}\triangleq\gamma(e_k)\widehat{\nabla}_{e_k}
$
is a well-defined first order partial differential operator acting on $C^{\infty}(M;E)$, where $\{e_k\}_{k=1}^d$
denotes any orthonormal basis of $T_xM$ at each point $x\in M$.
Next, one can naturally extend the Riemannian metric $\mu_g$ and connection $\nabla$ on $TM$ to $\mathrm{Cl}(TM)$
 with the properties that:
 the extended connection denoted also by $\nabla$ preserves the extended metric, and
\begin{equation}\label{clifford preverse}
\nabla_{X}(\alpha\ast\beta)=(\nabla_{X}\alpha)\ast\beta+\alpha\ast(\nabla_{X}\beta)
\end{equation}
for all $X\in C^{\infty}(M;TM)$, $\alpha,\beta\in C^{\infty}(M;\mathrm{Cl}(TM))$. We then call
a connection $\widetilde{\nabla}$ on the Clifford module $(E,\gamma)$  compatible with $\gamma$  if
\begin{equation}\label{compatible original}
\widetilde{\nabla}_X(\gamma(\alpha)\phi)=\gamma(\nabla_X\alpha)\phi+\gamma(\alpha)(\widetilde{\nabla}_X\phi)
\end{equation}
for all  $X\in C^{\infty}(M;TM)$, $\alpha\in  C^{\infty}(M;\mathrm{Cl}(TM))$,  $\phi\in  C^{\infty}(M;E)$.
It is well-known (\cite{Branson Gilkey}) that on
a Clifford module there always exists  a compatible connection. Such a triple $(E,\gamma,\widetilde{\nabla})$ is called a Dirac bundle.
With the concept of Dirac bundle available, it is easy to see that any operator $D$ of Dirac type acting on $C^{\infty}(M;E)$ can always be written as
$
D=\gamma\widetilde{\nabla}+\psi,
$
where $(E,\gamma,\widetilde{\nabla})$ is some Dirac bundle with $\gamma$ uniquely determined by the principal symbol $\sigma_D$ of $D$,
 $\psi\in C^{\infty}(M;\mathrm{End}(E))$. We call $\psi$ the potential of $D$ associated with the Dirac bundle $(E,\gamma,\widetilde{\nabla})$.

Let $(E,\gamma,\widetilde{\nabla})$ be a Dirac bundle and let
$D_{\psi}=\gamma\widetilde{\nabla}+\psi$ be a Dirac type operator with potential $\psi$.
The Bochner-Weitzenb\"{o}ck technique  permits us to find a unique connection $\nabla_{\psi}$ on $E$ and a unique  $V_{\psi}\in C^{\infty}(M;\mathrm{End}(E))$ such that
 $D_{\psi}^2=\Delta^{\nabla_{\psi}}+V_{\psi}$,
 where $\Delta^{\nabla_{\psi}}=\nabla_{\psi}^{\ast}\nabla_{\psi}$ denotes the connection Laplacian generated by $\nabla_{\psi}$.
In a series of papers by Gilkey and his collaborators (see e.g. \cite{Branson Gilkey,Gilkey AFSG,Gilkey 2007}),
 $V_{\psi}$ is always written as a function explicitly  depending on  $\nabla_{\psi}$ and a few others.
 Our feature of proving Theorem \ref{theorem 1.6}  is to first
 express $V_{\psi}$ as a function of  $\gamma$, $\widetilde{\nabla}$,
 $\psi$, then treat the first two variables as dummy ones.


Given a Dirac bundle $(E,\gamma,\widetilde{\nabla})$, we define a connection $\overline{\nabla}$ on $\mathrm{End}(E)$ by
\begin{equation}\label{new connection}
(\overline{\nabla}_X\omega)(\phi)=\widetilde{\nabla}_X(\omega(\phi))-\omega(\widetilde{\nabla}_X\phi)
\end{equation}
for all $X\in C^{\infty}(M;TM)$,  $\omega\in C^{\infty}(M;\mathrm{End}(E))$, $\phi\in C^{\infty}(M;E)$. It is easy to verify that $\overline{\nabla}$ is
compatible with  $\gamma$ in the following sense:
\begin{align}
(\overline{\nabla}_X)(\omega\gamma(Y))&=(\overline{\nabla}_X\omega)\gamma(Y)+\omega\gamma(\nabla_XY)\ \  \label{compatible 52}\\
(\overline{\nabla}_X)(\gamma(Y)\omega)&=\gamma(Y)(\overline{\nabla}_X\omega)+\gamma(\nabla_XY)\omega\ \ \label{compatible 53}
\end{align}
 where $X,\omega,\phi$ remain the same meanings as before, $Y\in C^{\infty}(M;TM)$.

We also need to fix a few notations.  If $(x^1,\ldots,x^d)$ is a system of local coordinates on $M$, then let
$\{\partial_i=\frac{\partial}{\partial x^i}\}$ denote the local  coordinate frames of $TM$.
We use the metric tensor to identify the tangent and cotangent bundles $TM=T^{*}M$, which means locally  $dx^j\equiv g^{ij}\partial_i$,
where $(g^{ij})$ is the inverse of the matrix $(g_{ij}\triangleq g(\partial_i,\partial_j))$.
The curvature tensor of any connection $\widehat{\nabla}$ on  $E$ is denoted by $R^{\widehat{\nabla}}$, that is,
   $R_{XY}^{\widehat{\nabla}}=[\widehat{\nabla}_X,\widehat{\nabla}_Y]-\widehat{\nabla}_{[X,Y]}$
   for  $X,Y\in C^{\infty}(M;TM)$.  Let $\Gamma_{ij}^k$ denote the Christoffel symbols
of the Riemannian connection $\nabla$, that is, $\nabla_{\partial_i}\partial_j=\Gamma_{ij}^k\partial_k$.
The connection Laplacian $\triangle^{\widehat{\nabla}}$  locally is of the form $-g^{ij}(\widehat{\nabla}_{\partial_i}\widehat{\nabla}_{\partial_j}-\Gamma_{ij}^k\widehat{\nabla}_{\partial_k})$.
Finally, we denote  $U^{i_1,\ldots,i_n}=U(dx^{i_1},\ldots,dx^{i_n})$ and
$U_{i_1,\ldots,i_n}=U(\partial_{i_1},\ldots,\partial_{i_n})$
for any map $U$ defined on the $n$-fold product of $TM$. 


\begin{Proposition}\label{lemma 41}
Let
$D$ be a Dirac type operator of potential $\psi$ associated with the Dirac bundle $(E,\gamma,\widetilde{\nabla})$.
  Let $L=L_{\psi}:TM\rightarrow\mathrm{End}(E)$ denote the  map  defined by
  \begin{equation}\label{555}L(X)=\frac{\gamma(X)\psi+\psi\gamma(X)}{2}.\end{equation}
  Then $\nabla_{\psi}\triangleq\widetilde{\nabla}-L$  is a connection on $E$ and
  $V_{\psi}\triangleq D^2-\Delta^{\nabla_{\psi}}\in C^{\infty}(M;\mathrm{End}(E))$.
  Locally we have
  \begin{equation}
   V_{\psi}=\frac{1}{2}\gamma^i\gamma^jR_{ij}^{\widetilde{\nabla}}+
   \frac{1}{2}[\gamma^i,\overline{\nabla}_{i}\psi]+L^iL_i
   +\psi^2.
  \end{equation}
\end{Proposition}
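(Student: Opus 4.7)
The plan is to prove the proposition by directly decomposing both $D^2$ and $\Delta^{\nabla_\psi}$ in a local coordinate frame, and then comparing term by term. The fact that $\nabla_\psi \triangleq \widetilde{\nabla}-L$ is a connection is immediate because $L$ is an $\mathrm{End}(E)$-valued one-form on $M$, and subtracting such a form from any connection produces another connection. The substantive content is the explicit formula for $V_\psi$, which forces one to check that all first-order differential pieces cancel, leaving only a bundle endomorphism.

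First I would expand $D^2 = (\gamma^i\widetilde{\nabla}_i + \psi)(\gamma^j\widetilde{\nabla}_j + \psi)$. The main computation is the principal term $\gamma^i\widetilde{\nabla}_i\gamma^j\widetilde{\nabla}_j$. By the definition of $\overline{\nabla}$ in (\ref{new connection}) and the compatibility relation (\ref{compatible 53}) applied with $\omega=\mathrm{id}$, one gets $\overline{\nabla}_i \gamma^j = \gamma(\nabla_{\partial_i}dx^j) = -\Gamma^j_{ik}\gamma^k$. Using the Leibniz rule to move $\widetilde{\nabla}_i$ past $\gamma^j$, the Clifford relation $\gamma^i\gamma^j + \gamma^j\gamma^i = -2g^{ij}\mathrm{id}$, and the symmetric/antisymmetric splitting of $\widetilde{\nabla}_i\widetilde{\nabla}_j$, I expect to arrive at the (standard) intermediate formula
\begin{equation*}
D^2 = \Delta^{\widetilde{\nabla}} + \tfrac{1}{2}\gamma^i\gamma^j R^{\widetilde{\nabla}}_{ij} + \gamma^i(\overline{\nabla}_i\psi) + 2L^i\widetilde{\nabla}_i + \psi^2,
\end{equation*}
where the symmetric part combined with the Christoffel contribution from $\overline{\nabla}_i\gamma^j$ reassembles $\Delta^{\widetilde{\nabla}}$, the antisymmetric part yields the curvature, and the cross terms $\gamma^i\widetilde{\nabla}_i\psi + \psi\gamma^j\widetilde{\nabla}_j$ produce exactly $\gamma^i(\overline{\nabla}_i\psi) + 2L^i\widetilde{\nabla}_i$ by the definition (\ref{555}) of $L$.

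Next I would compute $\Delta^{\nabla_\psi}$ using $\nabla_{\psi,i} = \widetilde{\nabla}_i - L_i$ and the local formula $\Delta^{\nabla_\psi} = -g^{ij}(\nabla_{\psi,i}\nabla_{\psi,j} - \Gamma^k_{ij}\nabla_{\psi,k})$. Straightforward expansion via Leibniz gives
\begin{equation*}
\Delta^{\nabla_\psi} = \Delta^{\widetilde{\nabla}} + g^{ij}(\overline{\nabla}_i L_j) + 2L^i\widetilde{\nabla}_i - L^iL_i - g^{ij}\Gamma^k_{ij}L_k.
\end{equation*}
Subtracting from $D^2$ immediately shows that the first-order terms $2L^i\widetilde{\nabla}_i$ cancel, confirming $V_\psi \in C^\infty(M;\mathrm{End}(E))$, and leaves
\begin{equation*}
V_\psi = \tfrac{1}{2}\gamma^i\gamma^j R^{\widetilde{\nabla}}_{ij} + \gamma^i(\overline{\nabla}_i\psi) + \psi^2 + L^iL_i - g^{ij}(\overline{\nabla}_i L_j) + g^{ij}\Gamma^k_{ij}L_k.
\end{equation*}

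The main (and only nontrivial) remaining step is to verify the identity
\begin{equation*}
\gamma^i(\overline{\nabla}_i\psi) - g^{ij}(\overline{\nabla}_i L_j) + g^{ij}\Gamma^k_{ij}L_k = \tfrac{1}{2}[\gamma^i,\overline{\nabla}_i\psi].
\end{equation*}
Here I would apply $\overline{\nabla}_i$ to $L_j = \frac{1}{2}(\gamma_j\psi+\psi\gamma_j)$ and use the Leibniz rule together with the tangent version $\overline{\nabla}_i\gamma_j = \Gamma^k_{ij}\gamma_k$ of the compatibility, yielding $\overline{\nabla}_i L_j = \Gamma^k_{ij}L_k + \frac{1}{2}\{\gamma_j,\overline{\nabla}_i\psi\}$. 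Contracting with $g^{ij}$ converts the anticommutator into $\frac{1}{2}\{\gamma^i,\overline{\nabla}_i\psi\}$, the Christoffel terms cancel, and then the elementary identity $\gamma^i A - \frac{1}{2}\{\gamma^i,A\} = \frac{1}{2}[\gamma^i,A]$ with $A = \overline{\nabla}_i\psi$ completes the proof. I expect this last bookkeeping step — tracking the index-raising conventions and the cancellation between Christoffel symbols inside $\overline{\nabla}L$ and the explicit Christoffel contribution — to be the main obstacle, since the rest of the argument consists of systematic application of Leibniz and Clifford relations.
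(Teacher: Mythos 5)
Your proposal is correct and follows essentially the same route as the paper: expand $D^2$ to the intermediate form $\Delta^{\widetilde\nabla}+\tfrac12\gamma^i\gamma^jR^{\widetilde\nabla}_{ij}+\gamma^i(\overline\nabla_i\psi)+2L^i\widetilde\nabla_i+\psi^2$, compare with the local expansion of $\Delta^{\nabla_\psi}$ so that the first-order terms $2L^i\widetilde\nabla_i$ cancel, and then use the compatibility of $\overline\nabla$ with $\gamma$ to convert $-g^{ij}(\overline\nabla_iL_j)+g^{ij}\Gamma^k_{ij}L_k$ into $-\tfrac12\{\gamma^i,\overline\nabla_i\psi\}$, yielding the commutator term. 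The only cosmetic difference is that the paper organizes the second step as a direct computation of $\Delta^{\widetilde\nabla}-\Delta^{\nabla_\psi}$ rather than writing out $\Delta^{\nabla_\psi}$ in full.
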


\begin{proof}
Let
$L$ be naturally identified with  \[C^{\infty}(M;TM)\times C^{\infty}(M;E)\stackrel{L}{\rightarrow}  C^{\infty}(M;E),\] which is
$C^{\infty}(M)$-linear in both variables. Thus
$\widehat{\nabla}\triangleq\widetilde{\nabla}-L$ is a connection on $E$. According to (\ref{new connection}) we have
\begin{align*}
\triangle^{\widetilde{\nabla}}-\triangle^{\widehat{\nabla}}&=
-g^{ij}(\widetilde{\nabla}_i\widetilde{\nabla}_j-\widetilde{\nabla}_i\widehat{\nabla}_j+\widetilde{\nabla}_i\widehat{\nabla}_j-\widehat{\nabla}_i\widehat{\nabla}_j)
+g^{ij}\Gamma_{ij}^k(\widetilde{\nabla}_k-\widehat{\nabla}_k)\\
&=-g^{ij}\widetilde{\nabla}_iL_j-g^{ij}L_i\widehat{\nabla}_j+g^{ij}\Gamma_{ij}^kL_k\\
&=-g^{ij}((\overline{\nabla}_iL_j)+L_j\widetilde{\nabla}_i)-g^{ij}L_i(\widetilde{\nabla}_j-L_j)+g^{ij}\Gamma_{ij}^kL_k\\
&=-2L^i\widetilde{\nabla}_i-g^{ij}(\overline{\nabla}_iL_j)+L^iL_i+g^{ij}\Gamma_{ij}^kL_k.
\end{align*}
On the other hand, according to  (\ref{compatible original}) and (\ref{new connection})
we have
\begin{align*}
D^2&=(\gamma^i\widetilde{\nabla}_i+\psi)(\gamma^j\widetilde{\nabla}_j+\psi)\\
&=\gamma^i\gamma^j\widetilde{\nabla}_i\widetilde{\nabla}_j-\Gamma_{ik}^j\gamma^i\gamma^k\widetilde{\nabla}_j+
\gamma^i(\overline{\nabla}_i\psi)+\gamma^i\psi\widetilde{\nabla}_i+\psi\gamma^j\widetilde{\nabla}_j+\psi^2\\
&=\gamma^i\gamma^j\frac{\widetilde{\nabla}_i\widetilde{\nabla}_j+\widetilde{\nabla}_j\widetilde{\nabla}_i}{2}+
\gamma^i\gamma^j\frac{R_{ij}^{\widetilde{\nabla}}}{2}+\Gamma_{ik}^jg^{ik}\widetilde{\nabla}_j +
\gamma^i(\overline{\nabla}_i\psi)+(\gamma^i\psi+\psi\gamma^i)\widetilde{\nabla}_i+\psi^2\\
&=-g^{ij}\widetilde{\nabla}_i\widetilde{\nabla}_j+g^{ij}\Gamma_{ij}^k\widetilde{\nabla}_k+\gamma^i\gamma^j\frac{R_{ij}^{\widetilde{\nabla}}}{2}
+
\gamma^i(\overline{\nabla}_i\psi)+(\gamma^i\psi+\psi\gamma^i)\widetilde{\nabla}_i+\psi^2\\
&=\triangle^{\widetilde{\nabla}}+(\gamma^i\psi+\psi\gamma^i)\widetilde{\nabla}_i+\gamma^i\gamma^j\frac{R_{ij}^{\widetilde{\nabla}}}{2}
+
\gamma^i(\overline{\nabla}_i\psi)+\psi^2.
\end{align*}
Consequently, combining the above calculations yields
\[D^2=\triangle^{\widehat{\nabla}}+\gamma^i\gamma^j\frac{R_{ij}^{\widetilde{\nabla}}}{2}
+
\gamma^i(\overline{\nabla}_i\psi)+\psi^2-g^{ij}(\overline{\nabla}_iL_j)+L^iL_i+g^{ij}\Gamma_{ij}^kL_k\triangleq \triangle^{\widehat{\nabla}}+V_{\psi}.\]
Next let us simplify the expression of $V_{\psi}$.
According to   (\ref{compatible 52}) and (\ref{compatible 53}) we have
\begin{align*}
-g^{ij}(\overline{\nabla}_iL_j)&=-g^{ij}\frac{\gamma_j(\overline{\nabla}_i\psi)+\Gamma_{ij}^k\gamma_k\psi+(\overline{\nabla}_i\psi)\gamma_j+
\Gamma_{ij}^k\psi\gamma_k}{2}\\
&=-g^{ij}\frac{\gamma_j(\overline{\nabla}_i\psi)+(\overline{\nabla}_i\psi)\gamma_j
}{2}-g^{ij}\Gamma_{ij}^kL_k\\
&=-\frac{\gamma^i(\overline{\nabla}_i\psi)+(\overline{\nabla}_i\psi)\gamma^i}{2}-g^{ij}\Gamma_{ij}^kL_k,
\end{align*}
which implies
\begin{align*}V_{\psi}&=\gamma^i\gamma^j\frac{R_{ij}^{\widetilde{\nabla}}}{2}
+
\gamma^i(\overline{\nabla}_i\psi)+\psi^2-\frac{\gamma^i(\overline{\nabla}_i\psi)+(\overline{\nabla}_i\psi)\gamma^i}{2}+L^iL_i\\
&=\gamma^i\gamma^j\frac{R_{ij}^{\widetilde{\nabla}}}{2}
+
\frac{1}{2}[\gamma^i,\overline{\nabla}_i\psi]+\psi^2+L^iL_i.\end{align*}
This  finishes the proof of Proposition \ref{lemma 41}.
\end{proof}

\subsection{Direct proof of Theorem \ref{theorem 1.6}}

 Let $D_{\epsilon}$ denote the Dirac type operator of potential $\psi_{\epsilon}=\psi-\epsilon F$ associated with the Dirac bundle $(E,\gamma,\widetilde{\nabla})$, where $\epsilon$ is a real parameter. In particular $D=D_0$.
Let $\nabla_{\epsilon}=\nabla_{\psi_{\epsilon}}$, $V_{\epsilon}=V_{\psi_{\epsilon}}$ be as determined by Proposition \ref{lemma 41}. Locally,
  \[V_{\epsilon}=\frac{1}{2}\gamma^i\gamma^jR_{ij}^{\widetilde{\nabla}}+
   \frac{1}{2}[\gamma^i,\overline{\nabla}_{i}\psi_{\epsilon}]+(L_{\psi_{\epsilon}})^i(L_{\psi_{\epsilon}})_i
   +\psi_{\epsilon}^2.\]
Thus
\[\frac{d}{d\epsilon}\Big|_{\epsilon=0}\mathrm{Tr}(V_{\epsilon})=
\mathrm{Tr}\big((L_{-F})^i(L_{\psi})_i+(L_{\psi})^i(L_{-F})_i\big)-2\mathrm{Tr}(F\psi),
\]
which following simplification via the facts $\gamma^i\gamma_i=-d$ and  $\widehat{\psi}=\gamma^i\psi\gamma_i=\gamma_i\psi\gamma^i$ gives
\begin{equation}\label{diff}\frac{d}{d\epsilon}\Big|_{\epsilon=0}\mathrm{Tr}(V_{\epsilon})=\mathrm{Tr}\big(F((d-2)\psi-\widehat{\psi})\big).
\end{equation}
But it is known (see e.g. \cite[Lemma 2.1]{Branson Gilkey}, \cite[Thm. 3.3.1]{Gilkey AFSG}) that
\begin{equation}\label{49}
\int_M\mathrm{Tr}(\mathscr{H}_1(FD,D^2))=\frac{1}{12\cdot(4\pi)^{d/2}}\cdot
\frac{d}{d\epsilon}\Big|_{\epsilon=0}\int_M\mathrm{Tr}(\tau\mathrm{Id}_E-6V_{\epsilon}),\end{equation}
where $\tau$ is the scalar curvature of $M$. By combining (\ref{diff}), (\ref{49}) and by considering the fact
$\mathscr{H}_1(FD,D^2)=F\mathscr{H}_1(D,D^2)$, one gets
\[
\int_M\mathrm{Tr}\big(F\cdot\mathscr{H}_1(D,D^2)\big)=
\frac{1}{(4\pi)^{d/2}}\cdot \int_M\mathrm{Tr}\big(F\cdot\frac{\widehat{\psi}-(d-2)\psi}{2}\big).
\]
Consequently,
\begin{equation}\label{H1}
\mathscr{H}_1(D,D^2)=
\frac{1}{(4\pi)^{d/2}}\cdot \frac{\widehat{\psi}-(d-2)\psi}{2}.
\end{equation}
as $F$ can be any smooth endomorphism of $E$.
By (\ref{17}) and (\ref{111}) (see also (\ref{224})),
\[
\mathscr{A}_1(F,D)
=\int_M\mathrm{Tr}\Big(F\big(\frac{\mathscr{H}_1(\mathrm{Id}_E,D^2)}{\Gamma(\frac{d-1}{2})}
+\frac{\mathscr{H}_1(D,D^2)}{\Gamma(\frac{d}{2})}\big)\Big).
\]
But  $\mathscr{H}_1(\mathrm{Id}_E,D^2)=0$ (see e.g. \cite{Gilkey Invariance}), which completes the proof of Theorem \ref{theorem 1.6}.

\begin{remark}\label{remark self adjoint}
It is known (see e.g. \cite{Gilkey Invariance,Seeley 1966}) that
(\ref{kernel}) and (\ref{heat kernel expansion}) still hold even if without assuming the self-adjointness of $D$ if the integral kernels are appropriately interpreted. In this general situation our direct proof of Theorem \ref{theorem 1.6} can also yield (\ref{H1}) with no modification at all.
\end{remark}

\section{Mollified counting coefficients II}\label{section 4}

In this section we will express $\mathscr{A}_1(A,P)$ in terms of principal and sub-principal symbols of $A$ and $P$, and prove Theorems \ref{theorem 1.62}, \ref{theorem 1.6}. 
We refer the readers to \cite{Chervova JST,ChervovaDV,Duistermaat Guillemin 1975,Fang,Ivrii,Ivrii 2,SafarovV,Sandoval}
for some other explicit two-term asymptotic expansions in various situations. 

We begin by reviewing
the definition of the sub-principal symbol of a classical pseudodifferential operator.

Recall $M$ is a closed manifold over which
the space of smooth sections of the half-density bundle is denoted by $C^{\infty}(M,\Omega_{1/2})$.
An operator $H:C^{\infty}(M,\Omega_{1/2})\rightarrow C^{\infty}(M,\Omega_{1/2})$
is called a classical pseudodifferential operator of order $m$
if on every coordinate patch its total symbol admits an asymptotic expansion
$\sum_{j=0}^{\infty}\sigma_H^{(j)}(x,\xi)$ with $\sigma_H^{(j)}$ homogeneous of degree $m-j$ in $\xi$.
 It is well-known that the sub-principal symbol (see e.g. \cite[Section 5.2]{Duistermaat-Hormander})  of $H$
defined in local coordinates  by \begin{equation}
\mathrm{Sub}(H)=\sigma_H^{(1)}+\frac{\mathrm{i}}{2}\cdot\frac{\partial^2\sigma_H^{(0)}}{\partial x^k\partial\xi_k},
\end{equation}
transforms like a homogeneous smooth function of degree $m-1$ on $T^{\ast}M\backslash0$ under change of charts.

Let $A:C^{\infty}(M)\rightarrow C^{\infty}(M)$ be a classical pseudodifferential operator of order $m$. In local coordinates we denote by
$\sigma_A^{(j)}=\sigma_A^{(j)}(x,\xi)$  ($j=0,1,2,\ldots$) the homogeneous part of degree $m-j$ of the total symbol of $A$\footnote{Note we also denote $\sigma_A^{(j)}$ by $\sigma_{m-j}(A)$ in the
previous sections.}.
Given a Riemannian metric $g$ on $M$ one can identify  $C^{\infty}(M)$ with $C^{\infty}(M,\Omega_{1/2})$
via the  map
\[f\in C^{\infty}(M)\stackrel{\Phi}{\longrightarrow}f\sqrt{\mu_g}\in C^{\infty}(M,\Omega_{1/2}),\]
where $\mu_g$ is the Riemannian density (called also metric measure) on $(M,g)$. Then $H=\Phi\circ A\circ \Phi^{-1}$
is a classical pseudodifferential operator  of order $m$ on half-densities.
On local coordinates let $G\triangleq\mathrm{det}(g_{ij})$. Note
$H=G^{1/4}AG^{-1/4}$
can be regarded as the product of three (local) pseudodifferential operators with corresponding
total symbols $G(x)^{1/4}$, $\sum_{j=0}^{\infty}\sigma_A^{(j)}(x,\xi)$,  $G(x)^{-1/4}$. Thus one can deduce from the product rule of pseudodifferential operators that
 \[\mathrm{Sub}(H)=\sigma_A^{(1)}+\frac{\mathrm{i}}{2}\cdot\frac{\partial^2\sigma_A^{(0)}}{\partial x^k\partial\xi_k}+
\frac{\mathrm{i}}{2}\cdot \frac{\partial\sigma_A^{(0)}}{\partial{\xi_k}}\cdot\frac{\partial (\log\sqrt{G})}{\partial x^k}\]
on local coordinates. This means the sub-principal symbol of $A$ locally defined  by (\cite[Remark 2.1.10]{SafarovV})
\begin{equation}\label{aaa}\mathrm{Sub}(A)=\sigma_A^{(1)}+\frac{\mathrm{i}}{2}\cdot\frac{\partial^2\sigma_A^{(0)}}{\partial x^k\partial\xi_k}+
\frac{\mathrm{i}}{2}\cdot \frac{\partial\sigma_A^{(0)}}{\partial{\xi_k}}\cdot\frac{\partial (\log\sqrt{G})}{\partial x^k},\end{equation}
is a homogeneous smooth function of degree $m-1$ on $T^{\ast}M\backslash0$.

Now let $A:C^{\infty}(M;E)\rightarrow C^{\infty}(M;E)$ be a classical pseudodifferential operator of order $m$.
For a fixed local bundle trivialization of $E$ over a coordinate neighborhood $U\subset M$, one can naturally
identify $A^U:C_c^{\infty}(U;E)\rightarrow C^{\infty}(U;E)$, the restriction of $A$ onto $U$,
 with a matrix $(A^U_{\mu\nu})_{1\leq\mu,\nu\leq r}$ of classical pseudodifferential operators $A^U_{\mu\nu}:C^{\infty}(M)\rightarrow C^{\infty}(M)$
 of order $m$, where $r$ denotes the rank of $E$. This implies that the sub-principal symbol of $A$
 defined by (see also \cite{Jakobson})
$
 \mathrm{Sub}(A)=(\mathrm{Sub}(A^U_{\mu\nu}))_{1\leq\mu,\nu\leq r}
$
 is a homogeneous smooth $\mathrm{Mat}(r,\mathbb{C})$-valued  function of degree $m-1$ on $T^{\ast}U\backslash0$.
 On local coordinates $\mathrm{Sub}(A)$ is still of the form (\ref{aaa}). We should remember, however, that
 the definition of the sub-principal symbol depends on the choice of local frames for $E$ and Riemannian metric on $M$.
 If the principal symbol is a multiple of the identity then the sub-principal symbol can be invariantly defined as a partial connection
 along the Hamiltonian vector field generated by the principal symbol (see \cite{Jakobson}, Section 3.1).

\begin{remark}
Let $M$ be a closed manifold and let $A:C^{\infty}(M)\rightarrow C^{\infty}(M)$ be a classical pseudodifferential operator of order $m$.
Some authors (\cite{SafarovV}) like to identify $C^{\infty}(M)$ with $C^{\infty}(M,\Omega_{1/2})$ by first introducing
a smooth positive density $\rho$ on $M$ then identifying functions with half-densities by $f\leftrightarrow f\sqrt{\rho}$.
Suppose this is the case then the sub-principal symbol of $A$  locally defined by (\cite[Remark 2.1.10]{SafarovV})
\begin{equation}\label{bbb}\mathrm{Sub}(A)=\sigma_A^{(1)}+\frac{\mathrm{i}}{2}\cdot\frac{\partial^2\sigma_A^{(0)}}{\partial x^k\partial\xi_k}+
\frac{\mathrm{i}}{2}\cdot \frac{\partial\sigma_A^{(0)}}{\partial{\xi_k}}\cdot\frac{\partial (\log \rho)}{\partial x^k},\end{equation}
is a homogeneous smooth function of degree $m-1$ on $T^{\ast}M\backslash0$. We point out there is no essential difference
between (\ref{aaa}) and (\ref{bbb}) as it is always possible to endow $M$ with a Riemannian metric $g$ such that $\mu_g=\rho$.
To see this one can first endow $M$ with an arbitrary Riemannian metric $\widehat{g}$,
 then get a positive smooth function $\kappa$ on $M$
so that $\rho=\kappa\mu_{\widehat{g}}$, and finally define $g=\kappa^{2/d}\widehat{g}$, where $d$ is the dimension of $M$.
\end{remark}

\begin{example}
Let $P=\Delta^{\nabla}$ denote the connection Laplacian generated by the connection $\nabla$ on a vector bundle $E$ of rank $r$.
For fixed local frame $\{s_{\mu}\}_{\mu=1}^r$ for $E|_U$ there exists a matrix
$\omega=(\omega_{\mu\nu})_{1\leq \mu,\nu\leq r}$ of one-forms  on $U$ such that
$\nabla s_{\mu}=\omega_{\mu\nu}\otimes s_{\nu}
$. Note in any local coordinates system $(x^1,\ldots,x^d)$ on $U$, $P=-g^{ij}(\nabla_i\nabla_j-\Gamma_{ij}^k\nabla_k)$.
Letting $\nabla_j=\partial_j+b_j$ one can easily get $\sigma_P^{(0)}=g^{jk}(x)\xi_j\xi_k$, $\sigma_P^{(1)}=-2\mathrm{i}g^{jk}b_j\xi_k+\mathrm{i}g^{ln}\Gamma_{ln}^k\xi_k$.
Considering $\frac{\partial (\log\sqrt{G})}{\partial x^k}=\Gamma_{kn}^n$ (see e.g. \cite[Prop. 2.8]{Chavel}, \cite[Sec. 2.5]{Chen}, \cite[Sec. 6]{ChervovaDV}) we have
\begin{align*}
\mathrm{Sub}(P)&=\sigma_P^{(1)}+\frac{\mathrm{i}}{2}\cdot\frac{\partial^2\sigma_P^{(0)}}{\partial x^k\partial\xi_k}+
\frac{\mathrm{i}}{2}\cdot \frac{\partial\sigma_P^{(0)}}{\partial{\xi_k}}\cdot\frac{\partial (\log\sqrt{G})}{\partial x^k}\\
&=\sigma_P^{(1)}+\mathrm{i}\cdot\frac{\partial g^{jk}}{\partial x^k}\cdot\xi_j+\mathrm{i}\cdot g^{jk}\xi_j\cdot\Gamma_{kn}^n\\
&=(-2\mathrm{i}g^{jk}b_j\xi_k+\mathrm{i}g^{ln}\Gamma_{ln}^k\xi_k)+\mathrm{i}\cdot(-\Gamma_{kn}^jg^{nk}-\Gamma_{kn}^kg^{nj})\cdot\xi_j+\mathrm{i}\cdot g^{jk}\xi_j\cdot\Gamma_{kn}^n\\
&=-2\mathrm{i}g^{jk}b_j\xi_k.
\end{align*}
Also, it is easy to check that $b_j=\omega^T(\partial_j)$ where $\omega^T$ denotes the transpose of $\omega$. Thus in an invariant manner,
$
\mathrm{Sub}(P)(x,\xi)=-2\mathrm{i}g(dx^j,dx^k)\omega^T(\partial_j)\xi_k
=-2\mathrm{i}g(\omega^T,\xi).
$
\end{example}

\begin{example}\label{Example Dirac}
Let $D=\gamma\nabla$ be the associated generalized Dirac operator of a Dirac bundle $(E,\gamma,\nabla)$ of rank $r$.
We adopt all the notations about $\nabla$ used in the previous example. Then $\sigma_D^{(0)}=\mathrm{i}\gamma^j\xi_j$, $\sigma_D^{(1)}=\gamma^{j}b_j$, and consequently,
\begin{align*}
\mathrm{Sub}(D)&=\sigma_D^{(1)}+\frac{\mathrm{i}}{2}\cdot\frac{\partial^2\sigma_D^{(0)}}{\partial x^k\partial\xi_k}+
\frac{\mathrm{i}}{2}\cdot \frac{\partial\sigma_D^{(0)}}{\partial{\xi_k}}\cdot\frac{\partial (\log\sqrt{G})}{\partial x^k}\\
&=\gamma^{j}b_j-\frac{1}{2}\cdot\frac{\partial \gamma^k}{\partial x^k}-\frac{1}{2}\cdot\gamma^k\cdot\Gamma_{kn}^n\\
&=\gamma^{j}b_j-\frac{1}{2}\cdot([\gamma^k,b_k]-\Gamma_{kn}^k\gamma^n)-\frac{1}{2}\cdot\gamma^k\cdot\Gamma_{kn}^n\\
&=\frac{\gamma^kb_k+b_k\gamma^k}{2},
\end{align*}
where the third equality follows from (\ref{compatible original}) (see also the next subsection). On each inner product space $(T_xM,g_x)$, we introduce two bilinear maps $J_x$, $K_x$ sending $X_x,Y_x\in T_xM$ respectively to
$\gamma(X_x)\omega^T(Y_x)$ and $\omega^T(X_x)\gamma(Y_x)$. Then it is easy to see that
\begin{equation}
\mathrm{Sub}(D)(x,\xi)=\frac{\mathrm{Tr}(J_x)+\mathrm{Tr}(K_x)}{2},
\end{equation}
which means $\mathrm{Sub}(D)(x,\xi)$ actually is independent of $\xi$.
\end{example}

With the sub-principal symbol concept available, we can express certain Wodzicki residues in a more invariant way.
A key ingredient (see e.g. \cite{Fedosov,Sitarz}) is that if $f$ is a smooth homogeneous function  of degree $1-d$ on $\mathbb{R}^d\backslash\{0\}$, then
 $\int_{|\xi|=1}\frac{\partial f}{\partial\xi_k}dS(\xi)=0$ for each $k$. For example, if
  $A:C^{\infty}(M;E)\rightarrow C^{\infty}(M;E)$ is a classical pseudodifferential operator of order $1-d$,
  then it is easy to see that
  \[\int_{|\xi|=1}\mathrm{Tr}(\sigma_A^{(1)}(x,\xi))dS(\xi)=\int_{|\xi|=1}\mathrm{Tr}(\mathrm{Sub}(A)(x,\xi))dS(\xi),\]
  and consequently, the global density $\mathrm{res}_x(A)dx$ is also of the form
  \[\Big(\frac{1}{(2\pi)^d}\int_{|\xi|=1}\mathrm{Tr}(\mathrm{Sub}(A)(x,\xi))dS(\xi)\Big)dx,\]
 which is independent of the choice of local coordinates of $M$ and local frames of $E$.
 For this reason,  if
 $A$ is a classical pseudodifferential operator of order $1-d$, then $\mathrm{res}(A)$ can be written as
  \begin{equation}\label{sub wodzicki}
\mathrm{res}(A)=\frac{1}{(2\pi)^d}\int_{T_1^{\ast}M}\mathrm{Tr}(\mathrm{Sub}(A)),
  \end{equation}
which is simply short for
\[\int_M\Big(\frac{1}{(2\pi)^d}\int_{|\xi|=1}\mathrm{Tr}(\mathrm{Sub}(A)(x,\xi))dS(\xi)\Big)dx.\]
Similarly, any integration over the unit cotangent bundle in the paper is always understood in this manner.

\begin{Proposition}\label{lemma 43} Let $A,B$ be classical pseudodifferential operators on sections of $E$  such that the sum of
the order of $A$ and $B$ is $1-d$. Then
\[\mathrm{res}(AB)=\frac{1}{(2\pi)^d}\int_{T_1^{\ast}M}\mathrm{Tr}(\mathrm{Sub}(A)\cdot\sigma_B^{(0)}+\sigma_A^{(0)}\cdot\mathrm{Sub}(B)).\]
\end{Proposition}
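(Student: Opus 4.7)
The plan is to reduce the claim to a tracial identity for Poisson brackets by applying formula (\ref{sub wodzicki}) to $AB$. Since $A$ and $B$ have orders summing to $1-d$, the composition $AB$ has order $1-d$, so (\ref{sub wodzicki}) gives
\[\mathrm{res}(AB) = \frac{1}{(2\pi)^d}\int_{T_1^{\ast}M}\mathrm{Tr}(\mathrm{Sub}(AB)).\]
Thus it suffices to compare $\mathrm{Tr}(\mathrm{Sub}(AB))$ with $\mathrm{Tr}(\mathrm{Sub}(A)\cdot\sigma_B^{(0)} + \sigma_A^{(0)}\cdot\mathrm{Sub}(B))$ after integration.

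First I would establish the product formula for the subprincipal symbol (stated just below Theorem \ref{theorem 1.62} in the introduction):
\[\mathrm{Sub}(AB) = \mathrm{Sub}(A)\cdot\sigma_B^{(0)} + \sigma_A^{(0)}\cdot\mathrm{Sub}(B) + \frac{1}{2\mathrm{i}}\{\sigma_A^{(0)},\sigma_B^{(0)}\},\]
where $\{a,b\} = \partial_{\xi_k}a\cdot\partial_{x^k}b - \partial_{x^k}a\cdot\partial_{\xi_k}b$ is the matrix Poisson bracket (products taken in the stated order). In a local bundle frame over a coordinate chart, this is a direct calculation: combine the definition (\ref{aaa}) of $\mathrm{Sub}$ with the composition rule $\sigma_{AB}\sim\sum_{\alpha}\frac{1}{\alpha!}(\partial_{\xi}^{\alpha}\sigma_A)(D_x^{\alpha}\sigma_B)$, truncated at degree $-d$. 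A small but satisfying cancellation eliminates all $\partial_{x^k}\log\sqrt{G}$ corrections (since $\partial_{\xi_k}(\sigma_A^{(0)}\sigma_B^{(0)}) = \partial_{\xi_k}\sigma_A^{(0)}\cdot\sigma_B^{(0)} + \sigma_A^{(0)}\cdot\partial_{\xi_k}\sigma_B^{(0)}$) as well as the mixed $\partial^2_{x^k\xi_k}$ terms, leaving exactly the Poisson bracket.

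Given the product formula, the proposition reduces to showing
\[\int_{T_1^{\ast}M}\mathrm{Tr}(\{\sigma_A^{(0)},\sigma_B^{(0)}\}) = 0.\]
For this I would invoke the trace property of the Wodzicki residue, i.e.\ $\mathrm{res}([A,B])=0$. Indeed, $[A,B]$ is a classical pseudodifferential operator of order at most $a+b-1=-d$, and the degree $-d$ part of its total symbol (obtained by antisymmetrising the composition expansion) is
\[\sigma_{-d}([A,B]) = [\sigma_A^{(0)},\sigma_B^{(1)}] + [\sigma_A^{(1)},\sigma_B^{(0)}] + \frac{1}{\mathrm{i}}\{\sigma_A^{(0)},\sigma_B^{(0)}\}.\]
The two commutator terms are traceless by cyclicity of $\mathrm{Tr}$, so
\[0 = \mathrm{res}([A,B]) = \frac{1}{\mathrm{i}(2\pi)^d}\int_{T_1^{\ast}M}\mathrm{Tr}(\{\sigma_A^{(0)},\sigma_B^{(0)}\}),\]
which is exactly the desired vanishing.

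The genuinely content-carrying step is the product formula for subprincipal symbols; the rest is a one-line application of cyclicity together with the (well-known) trace property of $\mathrm{res}$. I expect the main obstacle to be bookkeeping in the local-coordinate derivation of the product formula, particularly keeping track of the ordering of matrix products in $\{\sigma_A^{(0)},\sigma_B^{(0)}\}$ and verifying that the $\log\sqrt{G}$ contributions coming from $\mathrm{Sub}(A)$ and $\mathrm{Sub}(B)$ conspire with $\partial_{\xi_k}$ of $\sigma_A^{(0)}\sigma_B^{(0)}$ in the expression for $\mathrm{Sub}(AB)$ to cancel cleanly. Once this bookkeeping is done, the tracial argument dispatches the Poisson bracket term with no further analysis.
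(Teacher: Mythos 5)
Your proposal is correct and rests on the same two ingredients as the paper's proof: the product formula $\mathrm{Sub}(AB)=\mathrm{Sub}(A)\cdot\sigma_B^{(0)}+\sigma_A^{(0)}\cdot\mathrm{Sub}(B)+\frac{1}{2\mathrm{i}}\{\sigma_A^{(0)},\sigma_B^{(0)}\}$ together with the trace property of the Wodzicki residue. The only divergence is in how the Poisson bracket term is disposed of. The paper writes $\mathrm{res}(AB)=\frac{1}{2}\big(\mathrm{res}(AB)+\mathrm{res}(BA)\big)$, applies (\ref{sub wodzicki}) to both summands, and observes that $\mathrm{Tr}\big(\{\sigma_A^{(0)},\sigma_B^{(0)}\}+\{\sigma_B^{(0)},\sigma_A^{(0)}\}\big)=0$ \emph{pointwise} by cyclicity, so the bracket disappears with no further computation. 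You instead apply (\ref{sub wodzicki}) to $AB$ alone and then prove $\int_{T_1^{\ast}M}\mathrm{Tr}\{\sigma_A^{(0)},\sigma_B^{(0)}\}=0$ by a second symbol computation, for $[A,B]$, combined with $\mathrm{res}([A,B])=0$. Since $\mathrm{res}([A,B])=0$ and $\mathrm{res}(AB)=\mathrm{res}(BA)$ are the same fact, neither route uses a stronger input; the paper's symmetrization merely saves you the extra composition expansion. One small imprecision: your displayed formula for $\sigma_{-d}([A,B])$ is not correct as a matrix identity --- the term produced by antisymmetrizing the composition expansion is $\partial_{\xi_k}\sigma_A^{(0)}\cdot\partial_{x^k}\sigma_B^{(0)}-\partial_{\xi_k}\sigma_B^{(0)}\cdot\partial_{x^k}\sigma_A^{(0)}$, which differs from $\frac{1}{\mathrm{i}}\{\sigma_A^{(0)},\sigma_B^{(0)}\}$ by the ordering of the matrix factors in the second product --- but the two expressions agree under $\mathrm{Tr}$, so your conclusion is unaffected.
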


\begin{proof}
It is
straightforward to verify that (see also \cite[(1.4)]{Duistermaat Guillemin 1975})
\begin{equation}\label{sub product rule}
\mathrm{Sub}(AB)=\mathrm{Sub}(A)\cdot\sigma_B^{(0)}+\sigma_A^{(0)}\cdot\mathrm{Sub}(B)+\frac{1}{2\mathrm{i}}\{\sigma_A^{(0)},\sigma_B^{(0)}\},
\end{equation}
where \[\{\sigma_A^{(0)},\sigma_B^{(0)}\}=\frac{\partial\sigma_A^{(0)}}{\partial\xi_k}\cdot\frac{\partial\sigma_B^{(0)}}{\partial x^k}
-\frac{\partial\sigma_A^{(0)}}{\partial x^k}\cdot\frac{\partial\sigma_B^{(0)}}{\partial\xi_k}.\]
By considering the fact $\mathrm{Tr}(\{\sigma_A^{(0)},\sigma_B^{(0)}\}+\{\sigma_B^{(0)},\sigma_A^{(0)}\})=0$, one can easily get
\[\frac{\mathrm{Tr}(\mathrm{Sub}(AB)+\mathrm{Sub}(BA))}{2}=\mathrm{Tr}(\mathrm{Sub}(A)\cdot\sigma_B^{(0)}+\sigma_A^{(0)}\cdot\mathrm{Sub}(B)),\]
which proves the proposition via (\ref{sub wodzicki}) as $\mathrm{res}(AB)=\mathrm{res}(BA)$. We are done.
\end{proof}

According to Proposition \ref{prop 28} and Proposition \ref{lemma 43}, one has
\begin{equation}\label{47}
\mathscr{A}_1(A,P)=\frac{1}{(2\pi)^d}\int_{T_1^{\ast}M}\mathrm{Tr}\big(\mathrm{Sub}(A)
\cdot\sigma_{P^{\frac{1-d-m}{2}}}^{(0)}+\sigma_A^{(0)}\cdot\mathrm{Sub}(P^{\frac{1-d-m}{2}})
\big).
\end{equation}
For any real number $q$, it is not hard to verify that (see also \cite[(1.3)]{Duistermaat Guillemin 1975} and its proof therein)
\begin{equation}\label{499}
\mathrm{Sub}(P^{q})=q\cdot(\sigma_{P}^{(0)})^{q-1}\cdot\mathrm{Sub}(P).
\end{equation}
Thus by combining (\ref{47}) $\sim$ (\ref{499})
and by considering  $\sigma_{P^{q}}^{(0)}=(\sigma_{P}^{(0)})^{q}$,
we get

\begin{theorem}\label{theorem 47} Let $A$ be a classical pseudodifferential operator of order $m$ and let $P$ be a non-negative self-adjoint Laplacian
on sections of $E$. Then
\begin{equation}
\mathscr{A}_1(A,P)=\frac{1}{(2\pi)^d}\int_{T_1^{\ast}M}\mathrm{Tr}\big(\mathrm{Sub}(A)
+\frac{1-d-m}{2}\cdot\sigma_A^{(0)}\cdot\mathrm{Sub}(P)
\big).
\end{equation}
\end{theorem}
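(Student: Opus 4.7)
The plan is to combine Proposition 2.8 with Proposition 4.3 and then evaluate everything on the unit cotangent bundle, exactly as indicated by equations (4.7)--(4.9) preceding the statement. Specifically, Proposition 2.8 gives $\mathscr{A}_1(A,P) = \mathrm{res}(A P^{(1-d-m)/2})$. Setting $q = (1-d-m)/2$ and $B = P^q$, Seeley's theorem ensures $B$ is classical of order $2q = 1-d-m$, so the product $AB$ has order exactly $1-d$. This is precisely the order to which Proposition 4.3 applies.

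Next, I would invoke Proposition 4.3 to rewrite the Wodzicki residue as
\begin{equation*}
\mathrm{res}(AB) = \frac{1}{(2\pi)^d} \int_{T_1^{\ast}M} \mathrm{Tr}\bigl(\mathrm{Sub}(A)\cdot \sigma_B^{(0)} + \sigma_A^{(0)}\cdot \mathrm{Sub}(B)\bigr).
\end{equation*}
The essential simplification is that on the unit cotangent bundle we have $\sigma_P^{(0)}(x,\xi) = g_x(\xi,\xi) = 1$, so every power of the principal symbol of $P$ collapses to the identity. Consequently $\sigma_B^{(0)} = (\sigma_P^{(0)})^{q} \equiv 1$ on $T_1^{\ast}M$, and by the functional-calculus identity $\mathrm{Sub}(P^q) = q\,(\sigma_P^{(0)})^{q-1}\,\mathrm{Sub}(P)$ recorded in (4.9), one has $\mathrm{Sub}(B) = \tfrac{1-d-m}{2}\,\mathrm{Sub}(P)$ on $T_1^{\ast}M$. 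Substituting these into the displayed formula yields the stated expression for $\mathscr{A}_1(A,P)$.

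The only nontrivial ingredient is the formula $\mathrm{Sub}(P^q) = q\,(\sigma_P^{(0)})^{q-1}\,\mathrm{Sub}(P)$, which is where I expect the most care to be needed. One can establish it by the Seeley parametrix construction for complex powers: writing $P^q = \frac{i}{2\pi}\int_\Gamma \lambda^q (P-\lambda)^{-1} d\lambda$, expanding the symbol of the resolvent, and identifying the terms of homogeneity $2q$ and $2q-1$. Alternatively, one can verify the identity for integer $q$ using the product rule for subprincipal symbols (equation (\ref{sub product rule})), observe that the Poisson bracket contribution $\{\sigma_P^{(0)}, (\sigma_P^{(0)})^{q-1}\}$ vanishes, and extend the formula in $q$ by the uniqueness of the holomorphic family $P^q$. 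Since the paper cites Duistermaat--Guillemin for this fact, a brief invocation suffices, and the rest of the proof is simply substitution.
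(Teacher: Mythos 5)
Your proposal is correct and follows exactly the paper's own route: Proposition \ref{prop 28} gives $\mathscr{A}_1(A,P)=\mathrm{res}(AP^{\frac{1-d-m}{2}})$, Proposition \ref{lemma 43} converts this residue into the cosphere integral, and the identities $\sigma_{P^q}^{(0)}=(\sigma_P^{(0)})^q$ together with $\mathrm{Sub}(P^q)=q(\sigma_P^{(0)})^{q-1}\mathrm{Sub}(P)$ collapse to the stated formula on $T_1^{\ast}M$. Your extra remarks on how to justify (\ref{499}) only elaborate on what the paper delegates to Duistermaat--Guillemin, so the argument is essentially identical.
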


Theorem \ref{theorem 1.62} is an immediate consequence of Theorem \ref{theorem 47}
as it suffices to first note from (\ref{220}) that
\begin{equation}\label{A1}\mathscr{A}_1(A,D)=\frac{\mathscr{A}_1(A,D^2)+\mathscr{A}_1(AD,D^2)}{2},\end{equation}
then apply Theorem \ref{theorem 47} accordingly.

 Theorem \ref{theorem 1.6} is also a consequence of  Theorem \ref{theorem 47}, which can be seen as follows.

 \textsc{Step 1}:
Let $F$ be a smooth bundle endomorphism and let $D$ be a self-adjoint Dirac type operator on sections of $E$.
Obviously, it follows from Theorem \ref{theorem 47}  that $\mathscr{A}_1(F,D^2)=0$.
So according to (\ref{A1}), we get $\mathscr{A}_1(F,D)=\frac{\mathscr{A}_1(FD,D^2)}{2}$. By Proposition \ref{prop 28}, we also have
\[\mathscr{A}_1(FD,D^2)=\mathrm{res}(FD|D|^{-d})=\mathrm{res}(F|D|^{-d}D)=\mathrm{res}(DF|D|^{-d})=\mathscr{A}_1(DF,D^2).\]
Consequently,
\begin{equation}\mathscr{A}_1(F,D)=\frac{\mathscr{A}_1(FD,D^2)+\mathscr{A}_1(DF,D^2)}{4}.\end{equation}
Application of Theorem \ref{theorem 47} results in
\[\mathscr{A}_1(F,D)=\frac{1}{(2\pi)^d}\int_{T_1^{\ast}M}\mathrm{Tr}\Big(F\cdot \Big[\frac{\mathrm{Sub}(D)}{2}-
\frac{d}{2}\cdot\frac{\sigma_D^{(0)}\cdot\mathrm{Sub}(D^2)+\mathrm{Sub}(D^2)\cdot\sigma_D^{(0)}}{4}\Big]
\Big).\]

\textsc{Step 2}:
Recall $D=\gamma\widetilde{\nabla}+\psi$ is a self-adjoint Dirac type operator with potential $\psi$
associated with the Dirac bundle $(E,\gamma,\widetilde{\nabla})$. In local coordinates we  assume
$\widetilde{\nabla}_{\partial_j}=\partial_j+b_j,$
where $b_j$ are smooth matrix-valued functions. Thus $\sigma_D^{(0)}(x_0,\xi)=\mathrm{i}\gamma^j(x_0)\xi_j$,
$\sigma_D^{(1)}(x_0,\xi)=\gamma^j(x_0)b_j(x_0)+\psi(x_0).$
The compatible condition (\ref{compatible original}) between $\gamma$ and $\widetilde{\nabla}$ gives
$
\frac{\partial\gamma^k}{\partial x^j}=[\gamma^k,b_j]-\Gamma_{jn}^k\gamma^n.
$
We further assume the local coordinate system is a  Riemannian normal one centered at $x_0\in M$. Hence
\[
\big(\frac{\partial\gamma^k}{\partial x^j}\big)(x_0)=[\gamma^k(x_0),b_j(x_0)].
\]
So by the definition of the sub-principal symbol of $D$, we get
\begin{equation}\mathrm{Sub}(D)(x_0,\xi)=\gamma^j(x_0)b_j(x_0)+\psi(x_0)-\frac{1}{2}\big(\frac{\partial\gamma^k}{\partial x^k}\big)(x_0)=\big(\frac{\gamma^jb_j+b_j\gamma^j}{2}+\psi\big)(x_0).\end{equation}
Similarly, one can show that $\sigma_{D^2}^{(0)}(x_0,\xi)=|\xi|^2$, 
$
 \sigma_{D^2}^{(1)}(x_0,\xi)=\mathrm{i}(\gamma^k\psi+\psi\gamma^k-2b_k)(x_0)\xi_k.
$
So by the definition of the sub-principal symbol of $D^2$, we get
\[\mathrm{Sub}(D^2)(x_0,\xi)=\mathrm{i}(\gamma^k\psi+\psi\gamma^k-2b_k)(x_0)\xi_k.\]
Consequently,
 \begin{align}(\sigma_{D}^{(0)}\cdot\mathrm{Sub}(D^2))(x_0,\xi)&=-(\gamma^j\gamma^k\psi+\gamma^j\psi\gamma^k-2\gamma^jb_k)(x_0)\xi_j\xi_k,\\
 (\mathrm{Sub}(D^2)\cdot\sigma_{D}^{(0)})(x_0,\xi)&=-(\gamma^k\psi\gamma^j+\psi\gamma^k\gamma^j-2b_k\gamma^j)(x_0)\xi_k\xi_j.
 \end{align}

\textsc{Step 3}: By the above formulae we obtain
\begin{align*}
\int_{|\xi|=1}\mathrm{Tr}(F\cdot\mathrm{Sub}(D))(x_0,\xi)dS(\xi)&=\mathrm{Tr}\big(F\frac{\gamma^jb_j+b_j\gamma^j}{2}+F\psi\big)(x_0)\cdot\mathrm{Vol}(\mathbb{S}^{d-1}),\\
\int_{|\xi|=1}\mathrm{Tr}(F\cdot\sigma_D^{(0)}\cdot\mathrm{Sub}(D^2))(x_0,\xi)dS(\xi)&=\mathrm{Tr}\big(F\psi-\frac{F\widehat{\psi}}{d}+2\frac{F\gamma^jb_j}{d}\big)(x_0)\cdot\mathrm{Vol}(\mathbb{S}^{d-1}),\\
\int_{|\xi|=1}\mathrm{Tr}(F\cdot\mathrm{Sub}(D^2)\cdot\sigma_D^{(0)})(x_0,\xi)dS(\xi)&=\mathrm{Tr}\big(F\psi -\frac{F\widehat{\psi}}{d} +2\frac{Fb_j\gamma^j}{d}\big)(x_0)\cdot\mathrm{Vol}(\mathbb{S}^{d-1}).
\end{align*}
Therefore, by the last formula in Step 1,
\[\mathscr{A}_1(F,D)=\frac{1}{(2\pi)^d}\int_{M}\mathrm{Tr}\big(F\frac{\widehat{\psi}-(d-2)\psi}{4}\big)\cdot\mathrm{Vol}(\mathbb{S}^{d-1}),\]
which proves Theorem \ref{theorem 1.6} as
 $\mathrm{Vol}(\mathbb{S}^{d-1})=\frac{2\pi^{d/2}}{\Gamma(d/2)}$.

\section{Dirac type operators of vanishing second coefficient}\label{section 5}

Let
$D=\gamma\widetilde{\nabla}+\psi$ be a Dirac type operator of
potential $\psi$ associated with the Dirac bundle $(E,\gamma,\widetilde{\nabla})$. In this section we will no longer assume
the self-adjointness of $D$.
According to Remark \ref{remark self adjoint},
$\mathscr{H}_1(D,D^2)\in C^{\infty}(M;\mathrm{End}(E))$ is well-defined.

\begin{theorem}\label{theorem 51}  Let
 $D$ be a Dirac type operator.
 Then
${\mathscr H}_1(D,D^2)=0$ if and only if $D$ is a generalized Dirac operator.
\end{theorem}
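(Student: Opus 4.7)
The plan is to build on formula~(\ref{H1}),
\[
\mathscr{H}_1(D,D^2)=\frac{1}{(4\pi)^{d/2}}\cdot\frac{\widehat{\psi}-(d-2)\psi}{2},
\]
derived in Section~\ref{Section 3} by the Bochner--Weitzenb\"{o}ck technique; by Remark~\ref{remark self adjoint} this identity is valid for a general (not necessarily self-adjoint) Dirac type operator $D=\gamma\widetilde{\nabla}+\psi$. Therefore the theorem reduces to the purely algebraic, fibrewise claim that $\widehat{\psi}=(d-2)\psi$ if and only if $\psi=\gamma(A)$ for some $A\in C^\infty(M;T^\ast M\otimes \mathrm{End}(E))$ whose values commute with every element of the Clifford action, i.e.\ $A(X)\in\mathrm{End}_{\mathrm{Cl}}(E)$ for all $X$. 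The reverse implication is immediate: if some compatible connection $\widetilde{\nabla}'=\widetilde{\nabla}+A$ realises $D=\gamma\widetilde{\nabla}'$, then $\widehat{\psi}$ and $(d-2)\psi$ are both zero for the potential associated with $(E,\gamma,\widetilde{\nabla}')$, and hence $\mathscr{H}_1(D,D^2)=0$.

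For the hard direction, my ansatz would be the local endomorphism-valued covector
\[
B_i\triangleq -\tfrac{1}{2}\bigl(\gamma^i\psi+\psi\gamma^i\bigr).
\]
Summing $\gamma^iB_i$ and using $\sum_i\gamma^i\gamma^i=-d$ together with $\sum_i\gamma^i\psi\gamma^i=\widehat{\psi}$ gives
\[
\gamma^iB_i=-\tfrac{1}{2}\bigl(-d\,\psi+\widehat{\psi}\bigr),
\]
which equals $\psi$ precisely when $\widehat{\psi}=(d-2)\psi$. Hence, \emph{provided} each $B_i$ commutes with every $\gamma^k$, the 1-form $A=B_i\,dx^i$ (in a frame identified via the metric with $T^\ast M$) is compatible with the Dirac bundle, and $\widetilde{\nabla}'\triangleq\widetilde{\nabla}+A$ is a compatible connection exhibiting $D=\gamma\widetilde{\nabla}'$ as a generalized Dirac operator.

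The whole argument therefore hinges on showing that $\widehat{\psi}=(d-2)\psi$ forces $B_i\in\mathrm{End}_{\mathrm{Cl}}(E)$. I would first carry out a direct Clifford calculation of $\widehat{B_i}=\sum_k\gamma^kB_i\gamma^k$: separate the terms with $k=i$ from those with $k\neq i$, use anticommutation to isolate $\widehat{\psi}$, and substitute the hypothesis; the output is the eigenvalue identity $\widehat{B_i}=-d\,B_i$. Then I would invoke the fibrewise spectral description of the linear operator $\rho(X)\triangleq \sum_k\gamma^kX\gamma^k$ on $\mathrm{End}(E_x)$. Using the representation theory of Clifford modules one writes $\mathrm{End}(E_x)\cong \mathrm{Cl}(T_x^\ast M)\otimes\mathrm{End}_{\mathrm{Cl}}(E_x)$, and a short computation on a monomial $\gamma^{j_1}\cdots\gamma^{j_k}$ of Clifford degree $k$ yields that $\rho$ acts as the scalar $(-1)^{k+1}(d-2k)$. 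In particular the $(-d)$-eigenspace of $\rho$ coincides with $\mathrm{End}_{\mathrm{Cl}}(E_x)$, so the identity $\widehat{B_i}=-d\,B_i$ puts $B_i$ in the Clifford commutant as required.

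The main obstacle is precisely this last spectral identification when $d$ is odd, because then both $k=0$ and $k=d$ give the eigenvalue $-d$, so a naive eigenspace argument would be too generous. The resolution is that in odd dimensions the volume element $\omega=\gamma^1\cdots\gamma^d$ is central in $\mathrm{Cl}$ and acts as a scalar on each irreducible Clifford summand of $E_x$; consequently the apparent degree-$d$ contribution in $\mathrm{End}(E_x)$ already lies in $\mathrm{End}_{\mathrm{Cl}}(E_x)$, and the characterisation of the $-d$-eigenspace survives. Once this representation-theoretic point is nailed down, the theorem assembles from the formula~(\ref{H1}), the ansatz for $B_i$, the computation $\widehat{B_i}=-d\,B_i$, and the spectral characterisation of $\rho$.
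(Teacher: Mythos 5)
Your proposal is correct and follows essentially the same route as the paper's Section \ref{section 5}: the reduction via (\ref{H1}) and Remark \ref{remark self adjoint} to the purely algebraic statement, the forced ansatz $L(e_i)=B_i=-\tfrac12(\gamma^i\psi+\psi\gamma^i)$ together with the check $\gamma^iB_i=\tfrac12(d\psi-\widehat\psi)=\psi$, and the use of the grading $\mathrm{End}(W)=\bigoplus_k\mathrm{End}_k(W)$ on which $\rho:\psi\mapsto\widehat\psi$ acts by $(-1)^k(2k-d)$ are all exactly the paper's steps. The one genuine variation is which eigenvalue equation you exploit: your identity $\widehat{B_i}=-d\,B_i$ is correct (it follows from $\gamma^k\gamma^i=-\gamma^i\gamma^k-2\delta^{ik}$ and the hypothesis $\widehat\psi=(d-2)\psi$), and you then characterize the $(-d)$-eigenspace of $\rho$ as the Clifford commutant, whereas the paper (Propositions \ref{prop 51} and \ref{prop 53}) characterizes the $(d-2)$-eigenspace containing $\psi$ itself as $\gamma(\mathbb{R}^d)$ tensored with the commutant; the two are dual formulations requiring the same case analysis. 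One caution on that case analysis: the isomorphism $\mathrm{End}(E_x)\cong\mathrm{Cl}(T^\ast_xM)\otimes\mathrm{End}_{\mathrm{Cl}}(E_x)$ you invoke is not literally true when $d$ is odd and both inequivalent irreducible modules occur in $E_x$, i.e. $E_x\cong(W_0\otimes V_1)\oplus(W_0\otimes V_2)$ with $\gamma=\gamma^{(0)}\oplus(-\gamma^{(0)})$. Besides the identification $\mathrm{End}_{d-k}(W_0)=\mathrm{End}_k(W_0)$ via the central volume element (which you do address), you must also dispose of the off-diagonal blocks $\mathrm{Hom}(W_0\otimes V_1,\,W_0\otimes V_2)$ and its transpose, on which $\rho$ acts as $-\rho_0$; there the needed fact is that $+d$ is not an eigenvalue of $\rho_0$ in odd dimensions (the mirror image of the paper's statement that $2-d$ is not an eigenvalue), so these blocks contribute nothing to the $(-d)$-eigenspace, matching the fact that by Schur's lemma they contribute nothing to the commutant either. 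This is precisely Case 2 in the paper's proof of Proposition \ref{prop 51}; once it is spelled out, your argument is complete.
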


By considering (\ref{L1D}) and (\ref{114}), we see that Theorem \ref{theorem 51} implies Theorem \ref{theorem 1.3} as a self-adjoint Dirac type operator
is a Dirac type operator. But we should remark that Theorem \ref{theorem 51} and Theorem \ref{theorem 1.3} actually are equivalent
as for any Dirac bundle one can always introduce a hermitian structure so that the associated generalized
Dirac operator is self-adjoint on the Hilbert space defined by this hermitian structure and the metric measure on $M$ (see e.g. \cite{Branson Gilkey}).

Now we explain how to prove Theorem \ref{theorem 51}.
Let $D$ be a Dirac type operator of potential $\psi$ associated with the Dirac bundle $(E,\gamma,\nabla)$.
According to Remark \ref{remark self adjoint}, $\mathscr{H}_1(D,D^2)=0$ if and only if $\widehat{\psi}=(d-2)\psi$.
Thus to prove Theorem \ref{theorem 51} it suffices to show that if $\widehat{\psi}=(d-2)\psi$, then there exists a connection $\widetilde{\nabla}$ on $E$ compatible with $\gamma$ such that $D=\gamma\widetilde{\nabla}$.
Note any connection on $E$ must be of the form $\nabla_L=\nabla+L$, where
\[L:C^{\infty}(M;TM)\rightarrow C^{\infty}(M;\mathrm{End}(E))\] is a $C^{\infty}(M)$-linear map.
 It is easy to check that $\nabla_L$ is compatible with $\gamma$ if and only if
$L(X)$ commutes with $\gamma(Y)$ for all $X,Y\in C^{\infty}(M;TM)$. Suppose we do have such a map $L$ such  that $D=\gamma\nabla_L$.
Letting
$\{e_k\}_{k=1}^d$ be a local orthonormal frame in $TM$, we have $D=\gamma\nabla+\gamma(e_k)L(e_k)$, which means $\psi=\gamma(e_k)L(e_k)$.
Consequently, for any fixed $i\in\{1,\ldots,d\}$,
\begin{align*}
\frac{\gamma(e_i)\psi+\psi\gamma(e_i)}{2}=L(e_k)\frac{\gamma(e_i)\gamma(e_k)+\gamma(e_k)\gamma(e_i)}{2}=-L(e_i).
\end{align*}
This means that globally $L$ must be uniquely of the following form
\begin{equation}\label{51}L(X)=-\frac{\gamma(X)\psi+\psi\gamma(X)}{2}\end{equation}
for any $X\in C^{\infty}(M;TM)$.
Thus to prove Theorem \ref{theorem 51}, with assuming $\widehat{\psi}=(d-2)\psi$ and  defining  $\widetilde{\nabla}=\nabla+L$ with $L$ given by (\ref{51}), we need only to show that 1)  $D=\gamma\widetilde{\nabla}$, and 2) $\widetilde{\nabla}$ is compatible with $\gamma$.

The first property can be verified in the following way. Let $\{e_k\}_{k=1}^d$ be a local orthonormal frame in $TM$, then
\[
\gamma(e_k)L(e_k)=-\gamma(e_k)\frac{\gamma(e_k)\psi+\psi\gamma(e_k)}{2}=\frac{d\psi-\widehat{\psi}}{2}=\psi,
\]
which gives $D=\gamma\nabla+\psi=\gamma\nabla+\gamma(e_k)L(e_k)=\gamma\widetilde{\nabla}$.

To prove the second property we need to show that $L(X)$ commutes with $\gamma(Y)$ for all $X,Y\in C^{\infty}(M;TM)$.
But considering $L$ and $\gamma$ are $C^{\infty}(M)$-linear maps, it suffices to prove that
$L_x(X_x)$ commutes with $\gamma_x(Y_x)$ for all  $X_x,Y_x\in T_xM$ at each $x\in M$. This is guaranteed by the next proposition and thus a proof of Theorem \ref{theorem 51} is achieved.

 \begin{Proposition}\label{prop 51} Let  $(W,\gamma)$ be a complex $\mathrm{Cl}(\mathbb{R}^d)$-module. Let $\psi\in\mathrm{End}(W)$ be such that
 $\widehat{\psi}=(d-2)\psi$, and let $L:\mathbb{R}^d\rightarrow\mathrm{End}(W)$ be the linear map defined by
 \[L(X)=-\frac{\gamma(X)\psi+\psi\gamma(X)}{2}\ \ \ (X\in\mathbb{R}^d).\]
 Then $L(X)$ commutes with $\gamma(Y)$ for all $X,Y\in\mathbb{R}^d$.
 \end{Proposition}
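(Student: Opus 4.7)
The plan is to show that each $L(e_j)$ commutes with every $\gamma(e_k)$ by a spectral argument on the linear operator $\widehat{\,\cdot\,}\colon \mathrm{End}(W)\to\mathrm{End}(W)$ defined by $\widehat{T}=\sum_i\gamma(e_i)\,T\,\gamma(e_i)$, where $\{e_i\}$ is a fixed orthonormal basis of $\mathbb{R}^d$ and $\gamma_i:=\gamma(e_i)$. By bilinearity of $(X,Y)\mapsto [L(X),\gamma(Y)]$, it suffices to verify $[L(e_j),\gamma_k]=0$ for all $j,k$. The hypothesis $\widehat{\psi}=(d-2)\psi$ will be leveraged via the key intermediate statement that $\widehat{L(e_j)}=-d\,L(e_j)$, i.e.\ $L(e_j)$ is an eigenvector of $\widehat{\,\cdot\,}$ with eigenvalue $-d$.

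The first calculation is to verify this intermediate statement. Splitting the sum $\sum_i$ into $i=j$ and $i\neq j$ and using $\gamma_i\gamma_j+\gamma_j\gamma_i=-2\delta_{ij}$ to pull the stray $\gamma_j$ outside the inner sum, I would obtain the two identities
\[
\widehat{\gamma_j\psi}=-2\psi\gamma_j-\gamma_j\widehat{\psi},\qquad
\widehat{\psi\gamma_j}=-2\gamma_j\psi-\widehat{\psi}\gamma_j.
\]
Substituting $\widehat{\psi}=(d-2)\psi$, adding, and simplifying then collapses everything to
\[
\widehat{L(e_j)}=-\tfrac{1}{2}\bigl(\widehat{\gamma_j\psi}+\widehat{\psi\gamma_j}\bigr)=\tfrac{d}{2}(\gamma_j\psi+\psi\gamma_j)=-d\cdot L(e_j).
\]

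The same two identities, now applied to an arbitrary $T\in\mathrm{End}(W)$ in place of $\psi$, give the clean commutator formula
\[
\widehat{[\gamma_i,T]}=\widehat{\gamma_iT}-\widehat{T\gamma_i}=2[\gamma_i,T]-[\gamma_i,\widehat{T}].
\]
Specializing to $T=L(e_j)$ and inserting $\widehat{T}=-dT$ produces $\widehat{[\gamma_i,L(e_j)]}=(d+2)[\gamma_i,L(e_j)]$; in other words, $[\gamma_i,L(e_j)]$ must be an eigenvector of $\widehat{\,\cdot\,}$ with eigenvalue $d+2$. The proof will therefore be complete once I know that $d+2$ does not belong to the spectrum of $\widehat{\,\cdot\,}$ on $\mathrm{End}(W)$.

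This last step is the one I expect to be the main obstacle, and I would handle it via a norm bound. Equip $W$ with a Hermitian inner product for which each $\gamma_i$ is skew-adjoint and unitary (such a product exists by averaging an arbitrary one over the finite group generated by the $\gamma_i$, using that $\gamma_i^2=-\mathrm{id}$); then $\|\gamma_i\|_{\mathrm{op}}=1$ and the triangle inequality gives $\|\widehat{T}\|_{\mathrm{op}}\le d\,\|T\|_{\mathrm{op}}$, so the spectrum of $\widehat{\,\cdot\,}$ is contained in the closed disc of radius $d$. Alternatively, the eigenvalues can be identified with the set $\{(-1)^{k+1}(d-2k):0\le k\le d\}$ via the Clifford decomposition of $\mathrm{End}(W)$, again all of absolute value at most $d$. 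In either case $d+2$ is excluded, so $[\gamma_i,L(e_j)]=0$ for all $i,j$, which is the assertion.
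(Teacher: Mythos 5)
Your proof is correct, but it takes a genuinely different route from the paper's. The paper proves the proposition by representation theory: it decomposes $W$ into irreducible $\mathrm{Cl}(\mathbb{R}^d)$-modules (treating $d$ even and odd separately, with the two inequivalent irreducibles and a sign in the odd case), invokes the eigenspace decomposition $\mathrm{End}(W_0)=\bigoplus_k\mathrm{End}_k(W_0)$ together with Schur's lemma and the volume element to identify the $(d-2)$-eigenspace of $T\mapsto\widehat{T}$ as $\gamma(\mathbb{R}^d)$ tensored with the endomorphisms of the multiplicity space, and then observes that $L$ consequently takes values in $\mathrm{Id}_{W_0}\otimes\mathrm{End}(V)$, which manifestly commutes with the Clifford action. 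You instead work directly with the operator $T\mapsto\widehat{T}$ on $\mathrm{End}(W)$: your identities $\widehat{\gamma_jT}=-2T\gamma_j-\gamma_j\widehat{T}$ and $\widehat{T\gamma_j}=-2\gamma_jT-\widehat{T}\gamma_j$ are correct, they give $\widehat{L(e_j)}=-d\,L(e_j)$ and $\widehat{[\gamma_i,L(e_j)]}=(d+2)[\gamma_i,L(e_j)]$ exactly as you compute, and the norm bound via a group-averaged invariant inner product legitimately caps the spectral radius of $\widehat{\,\cdot\,}$ at $d$, forcing the commutator to vanish. This avoids the classification of irreducible modules and the even/odd case split entirely, so it is shorter and more self-contained; what the paper's longer route buys is the precise identification of the $(d-2)$-eigenspace (its Proposition 5.5), which it reuses elsewhere (e.g.\ in the $d=3$, rank-two example). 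One small caveat: your alternative justification of the spectrum, reading the eigenvalues $(-1)^{k+1}(d-2k)$ off ``the Clifford decomposition of $\mathrm{End}(W)$'', is only immediate when $W$ is irreducible, since otherwise the subspaces $\mathrm{End}_k(W)$ do not span $\mathrm{End}(W)$; for general $W$ you would first decompose into irreducibles (which is what your primary norm-bound argument lets you avoid). I would therefore keep the norm bound as the main justification and drop or qualify the alternative.
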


The rest of this section is mainly devoted to proving Proposition \ref{prop 51}.
In the first subsection we begin with reviewing a few basic concepts about the representations of irreducible complex Clifford modules (see e.g. \cite{Schroder}), then give a proof of Proposition \ref{prop 51}.
A few examples such as the massless Dirac operators (\cite{ChervovaDV,ChervovaV}) are studied in the second part.


\subsection{Clifford modules}



 Let $\mathrm{Cl}(\mathbb{R}^d)$ be the complex Clifford algebra generated by $(\mathbb{R}^d,\langle\cdot,\cdot\rangle)$
subject to the commutation relation
$
X\ast Y+Y\ast X=-2\langle X,Y\rangle,
$
where $X,Y\in \mathbb{R}^d$, $\langle\cdot,\cdot\rangle$ denotes the standard Euclidean metric on $\mathbb{R}^d$, and $\ast$ denotes the Clifford algebra operation. Any  complex $\mathrm{Cl}(\mathbb{R}^d)$-module $W$
with structure  $\gamma$ (unital algebra morphism from $\mathrm{Cl}(\mathbb{R}^d)$ to $\mathrm{End}(W)$) studied in this paper is always assumed to be of dimension in $\mathbb{N}$.
 A  complex $\mathrm{Cl}(\mathbb{R}^d)$-module $(W,\gamma)$
 is said to be \emph{irreducible}
if for any decomposition $W=W_1\oplus W_2$ into subspaces invariant under $\gamma$ one has
$W_1=W$ or $W_2=W$. It is known that a  complex $\mathrm{Cl}(\mathbb{R}^d)$-module is irreducible if and only if it is of complex dimension $2^{\lfloor\frac{d}{2}\rfloor}$.
 Two complex $\mathrm{Cl}(\mathbb{R}^d)$-modules $(W_1,\gamma)$ and $(W_2,\widetilde{\gamma})$
 are said to be \emph{equivalent} if there exists an invertible element $\kappa\in\mathrm{Hom}(W_1,W_2)$
such that $\widetilde{\gamma}=\kappa^{\ast}\gamma$, where $\kappa^{\ast}$ is defined by
sending $\varrho\in\mathrm{End}(W_1)$ to $\kappa\varrho\kappa^{-1}\in\mathrm{End}(W_2)$.
 Up to isomorphism there are exactly $\frac{3-(-1)^d}{2}$ inequivalent irreducible  complex $\mathrm{Cl}(\mathbb{R}^d)$-modules.
It is also known that any complex
$\mathrm{Cl}(\mathbb{R}^d)$-module  is a sum of irreducible ones.

Let $(W,\gamma)$ be a   complex $\mathrm{Cl}(\mathbb{R}^d)$-module and define
$\widehat{\psi}=\sum_{k=1}^d \gamma(e_k)\psi\gamma(e_k)$
for any $\psi\in\mathrm{End}(W)$,
where $\{e_k\}_{k=1}^d$ is an arbitrary orthonormal basis for $(\mathbb{R}^d,\langle\cdot,\cdot\rangle)$.
Let
$$\mathrm{End}_k(W):=\mathrm{Span}_{\mathbb{C}}\big\{\gamma(e_{i_1})\gamma(e_{i_2})\cdots\gamma(e_{i_k}): 1\leq i_1<i_2<\cdots<i_k\leq d\big\},$$
for $k>0$ and let $\mathrm{End}_0(W)$ be the complex span of the identity map.
A simple computation using the Clifford algebra relations shows that
$\mathrm{End}_k(W)$ is an eigenspace for the map $\psi \mapsto \widehat{\psi}$ with eigenvalue $(-1)^k(2k-d)$.
It is clear that Clifford multiplication by the volume element $\gamma(e_1) \gamma(e_2)\cdots\gamma(e_d)$
defines a linear map from $\mathrm{End}_k(W)$ to $\mathrm{End}_{d-k}(W)$.
If the module is irreducible the subspaces $\mathrm{End}_k(W)$ generate $\mathrm{End}(W)$ and therefore this
gives a decomposition into eigenspaces. Hence, in the irreducible case we have
\begin{gather*}
 \mathrm{End}(W) = \bigoplus_{k=0}^d \mathrm{End}_k(W) \quad \textrm{if}\; d\; \textrm{is even},\\
  \mathrm{End}(W) = \bigoplus_{k=0}^{\frac{d-1}{2}} \mathrm{End}_k(W) \quad  \textrm{if}\; d\; \textrm{is odd}.
\end{gather*}
In the latter case we have used $\mathrm{End}_{d-k}(W)=\mathrm{End}_k(W)$. This can be seen direclty because
Clifford multiplication by the volume element commutes with the Clifford  action. Therefore,
by Schur's lemma, it must be a multiple of the identity.
As an immediate consequence of this discussion we get the following Proposition.
\begin{Proposition}\label{prop 53}
If $(W,\gamma)$ is irreducible, then the $d-2$ eigenspace of the map  $\psi \mapsto \widehat{\psi}$ equals
$\mathrm{End}_{1}(W)=\mathrm{Span}_{\mathbb{C}}\gamma(\mathbb{R}^d)$.
If furthermore $d$ is  odd, then
$2-d$ is not an eigenvalue of the map $\psi \mapsto \widehat{\psi}$.
\end{Proposition}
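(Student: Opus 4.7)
The plan is to read both statements off the eigenspace decomposition established in the paragraph immediately preceding the proposition. For irreducible $(W,\gamma)$, that discussion identifies the spectrum of $\psi \mapsto \widehat{\psi}$ as $\{\lambda_k := (-1)^k(2k-d)\}$ with $k$ ranging over $\{0,1,\ldots,d\}$ when $d$ is even and over $\{0,1,\ldots,(d-1)/2\}$ when $d$ is odd (using $\mathrm{End}_{d-k}(W) = \mathrm{End}_k(W)$ in the odd case to avoid double counting), and identifies $\mathrm{End}_k(W)$ as the corresponding eigenspace. What remains is a short parity calculation in $k$.

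For the first assertion I would solve $\lambda_k = d-2$ in integers. Splitting by the parity of $k$: if $k$ is even then $2k - d = d-2$, forcing $k = d-1$; if $k$ is odd then $d - 2k = d-2$, forcing $k=1$. When $d$ is even, $d-1$ is odd and incompatible with the even branch, so $k=1$ is the unique solution; when $d$ is odd and $d \geq 3$, $k = d-1$ falls outside the reduced range $\{0,\ldots,(d-1)/2\}$, while $k=1$ stays inside. Hence the $d-2$ eigenspace is precisely $\mathrm{End}_1(W) = \mathrm{Span}_{\mathbb{C}}\gamma(\mathbb{R}^d)$, the latter equality holding by definition.

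For the second assertion, assume $d$ is odd and solve $\lambda_k = 2-d$ by the same parity split: the even-$k$ branch yields $k = 1$ (wrong parity, contradiction) and the odd-$k$ branch yields $k = d-1$, which is even when $d$ is odd (contradiction). Therefore $2-d$ is not in the spectrum. The entire argument is elementary parity bookkeeping---the substantive content (the explicit computation of $\widehat{\,\cdot\,}$ on products of $\gamma$'s and Schur's lemma for odd $d$ to identify $\mathrm{End}_{d-k}(W) = \mathrm{End}_k(W)$) is already in place, so I anticipate no real obstacle at the level of the proposition itself.
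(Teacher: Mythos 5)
Your proof is correct and coincides with the paper's: the proposition is presented there as an immediate consequence of the preceding discussion (the eigenvalue $(-1)^k(2k-d)$ on $\mathrm{End}_k(W)$ and the eigenspace decomposition for irreducible modules), and your parity analysis supplies exactly the verification the paper leaves implicit. No gaps.
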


Now  we can start to prove Proposition \ref{prop 51}.
 Let  $(W,\gamma)$ be a (finite-dimensional) complex $\mathrm{Cl}(\mathbb{R}^d)$-module
and let $(W_0,\gamma^{(0)})$ be an  irreducible complex $\mathrm{Cl}(\mathbb{R}^d)$-module. It is easy to see that up to isomorphism $(W_0,\gamma^{(0)})$ can represent all irreducible  complex $\mathrm{Cl}(\mathbb{R}^d)$-modules if $d$ is even,
 and so for $(W_0,\pm\gamma^{(0)})$ if $d$ is odd.
We then have two cases to consider.

Case 1: Suppose $d$ is even. In this case
 there exists a (finite-dimensional)
 complex vector space $V$ such that $(W,\gamma)$ is equivalent to $(W_0\otimes V,\widetilde{\gamma})$, where the Clifford
 structure $\widetilde{\gamma}$ on $W_0\otimes V$ is applied only to the first tensor factor $W_0$.
 To be precise, $\widetilde{\gamma}(Y)=\gamma^{(0)}(Y)\otimes\mathrm{Id}_V$ for any $Y\in\mathbb{R}^d$. Without loss of generality
 we can assume that $(W,\gamma)=(W_0\otimes V,\widetilde{\gamma})$ since the general cases can be dealt with simply by studying isomorphism between  Clifford modules. Then as an application of Proposition \ref{prop 53},
 we claim that the $d-2$ eigenspace of the map  $\psi \mapsto \widehat{\psi}$
 equals $\gamma^{(0)}(\mathbb{R}^d)\otimes\mathrm{End}(V)$. To prove this claim,
 it suffices to first fix a linear basis $\{K_i\}$ for $\mathrm{End}(V)$,
then  express  $\psi$ with $\widehat{\psi}=(d-2)\psi$ uniquely as
$\psi_i^{(0)}\otimes K_i$
where $\psi_i^{(0)}\in\mathrm{End}(W_0)$,  note \[\widehat{\psi_i^{(0)}}\otimes K_i=\widehat{\psi}=(d-2)\psi=(d-2)\psi_i\otimes K_i,\] and finally apply Proposition \ref{prop 53} appropriately.
This claim immediately implies that
 \[L(\mathbb{R}^d)=\{\mathrm{Id}_{W_0}\}\otimes\mathrm{End}(V),\]
 which obviously commutes with $\gamma(\mathbb{R}^d)=\gamma^{(0)}(\mathbb{R}^d)\otimes\{\mathrm{Id}_V\}$.

 Case 2: Suppose $d$ is odd.  In this case
 there exist (finite-dimensional)
 complex vector space $V_1,V_2$ such that $(W,\gamma)$ is equivalent to $((W_0\otimes V_1)\oplus(W_0\otimes V_2),\widetilde{\gamma})$, where the Clifford
 structure $\widetilde{\gamma}$ on $(W_0\otimes V_1)\oplus(W_0\otimes V_2)$ applied  only to $W_0$, is defined by
 \[\widetilde{\gamma}(Y)=\big(\gamma^{(0)}(Y)\otimes\mathrm{Id}_{V_1}\big)\bigoplus\big(-\gamma^{(0)}(Y)\otimes\mathrm{Id}_{V_2}\big)\]
 for any $Y\in\mathbb{R}^d$. Similar to the discussions in the previous case, we assume without loss of generality that
  $(W,\gamma)=((W_0\otimes V_1)\oplus(W_0\otimes V_2),\widetilde{\gamma})$.  Then as an application of Proposition \ref{prop 53},
 we claim that the $d-2$ eigenspace of the map  $\psi \mapsto \widehat{\psi}$
 equals $(\gamma^{(0)}(\mathbb{R}^d)\otimes\mathrm{End}(V_1))\oplus(\gamma^{(0)}(\mathbb{R}^d)\otimes\mathrm{End}(V_2))$.
 To prove this claim,
we first choose a linear basis $\{K_i^{\alpha\beta}\}$ for each $\mathrm{Hom}(V_{\alpha},V_{\beta})$,
then  express  $\psi$ with $\widehat{\psi}=(d-2)\psi$ uniquely as
\[\big(\psi_i^{(11)}\otimes K_i^{(11)}\big)\bigoplus\big(\psi_j^{(22)}\otimes K_j^{(22)}\big)+
\big(\psi_i^{(12)}\otimes K_i^{(12)}\big)\bigoplus\big(\psi_j^{(21)}\otimes K_j^{(21)}\big),\]
where $\psi_i^{(\alpha\beta)}\in\mathrm{End}(W_0)$. Here to be clear,  $W=(W_0\otimes V_1)\oplus(W_0\otimes V_2)$
is naturally identified with $(W_0\otimes V_2)\oplus(W_0\otimes V_1)$ by interchanging the positions,
 thus any element in $W$  mapped via the second summand
is indeed contained in $W$. Following this convention, it is straightforward to check that
\[\widehat{\psi}=\big(\widehat{\psi_i^{(11)}}\otimes K_i^{(11)}\big)\bigoplus\big(\widehat{\psi_j^{(22)}}\otimes K_j^{(22)}\big)+
\big(-\widehat{\psi_i^{(12)}}\otimes K_i^{(12)}\big)\bigoplus\big(-\widehat{\psi_j^{(21)}}\otimes K_j^{(21)}\big).\]
Thus the claim is an immediate consequence of Proposition \ref{prop 53}.
This claim implies that
 \[L(\mathbb{R}^d)=\big(\{\mathrm{Id}_{W_0}\}\otimes\mathrm{End}(V_1)\big)\bigoplus\big(\{\mathrm{Id}_{W_0}\}\otimes\mathrm{End}(V_2)\big),\]
 which commutes with $\widetilde{\gamma}(Y)$ for any $Y\in\mathbb{R}^d$.
This finishes the proof of Proposition \ref{prop 51}.


\subsection{Examples}

In this part we study a few examples. Recall  the diagonal values of the   smooth kernel  of $De^{-tD^2}$
admit a uniform small-time asymptotic expansion
\[
K(t,x,x,D,D^2)\sim\sum_{k=0}^{\infty}t^{\frac{k-d-1}{2}}{\mathscr H}_k(D,D^2)(x)\ \ \ (t\rightarrow0^{+}),
\]
where
\[
\mathscr{H}_1(D,D^2)=
\frac{1}{(4\pi)^{d/2}}\cdot \frac{\widehat{\psi}-(d-2)\psi}{2}.
\]
If $D$ is further self-adjoint, then the mollified local counting function of $D$
has a pointwise asymptotic expansion
\[
(\chi\ast N_x' )(\mu)\sim\sum_{k=0}^{\infty}\mu^{d-k-1}\mathrm{Tr}_{E_x}(\mathscr{L}_k(D)(x))\ \ \ (\mu\rightarrow\infty),
\]
where
$\mathscr{L}_1(D)=\frac{\mathscr{H}_1(D,D^2)}{\Gamma(\frac{d}{2})}$.

\begin{example}[Generalized Dirac operators]\label{example 33}
Let $D$ be a generalized Dirac operator  associated with a Dirac bundle. Obviously,  $\mathscr{H}_1(D,D^2)=0$. To compare, it was shown in \cite{Branson Gilkey} that
$\mathrm{Tr}(\mathscr{H}_1(D,D^2))=0$.
\end{example}

\begin{example}[Three-dimensional manifolds] \label{example 55}
Let $D$ be a Dirac type operator of potential $\psi$ associated with a Dirac bundle of rank two over a three-dimensional closed Riemannian manifold $M$.
  We claim $\mathscr{H}_1(D,D^2)=0$ if and only if $\mathrm{Tr}(\psi)=0$. To prove this claim, we  note that if $W$ is an irreducible complex
   $\mathrm{Cl}(\mathbb{R}^{3})$-module, then $W$ is of complex dimension $2$ and
   $ \mathrm{End}(W)= \mathrm{End}_{0}(W) \oplus  \mathrm{End}_{1}(W)$. Consequently,  $\mathrm{End}_{1}(W)$ is precisely the trace-free part of $\mathrm{End}(W)$.
\end{example}

\begin{example}[Parallelizable manifolds] \label{example 56}
In this example we construct generalized Dirac operators acting on $C^{\infty}(M;E)$, where  $M$
is a closed parallelizable Riemannian manifold of dimension $d$, $E=M\times \mathbb{C}^{r}$ with rank $r=2^{\lfloor\frac{d}{2}\rfloor}$.
The Riemannian metric on $M$ is denoted by $g$ and the Riemannian connection on $TM$ is denoted by $\nabla$.
Since $M$ is  parallelizable, there exist
smooth real vector fields $\{X_k\}_{k=1}^d$ such that $\{X_k(x)\}_{k=1}^d$ is an orthonormal basis for $(T_xM,g_x)$ at each $x\in M$.
Let $\{R_k\}_{k=1}^d$ be fixed complex matrices of size $r\times r$ with
 $R_jR_k+R_kR_j=-2\delta_{jk}$ for all $1\leq j,k\leq d$. Then it is straightforward to verify that $\gamma:TM\rightarrow\mathrm{End}(E)$ defined by
   \begin{equation}\gamma(X)=g(X,X_k)R_k \end{equation}
 is an irreducible $\mathrm{Cl}(TM)$-module structure.
Define a flat connection $\widehat{\nabla}$ on $E$  by
\begin{equation}\widehat{\nabla}_X\phi=\widehat{\nabla}_X\left(\begin{array}{c}
\phi_1  \\
\vdots\\
\phi_r
 \end{array}
 \right)=\left(\begin{array}{c}
X\phi_1  \\
\vdots\\
X\phi_r
 \end{array}
 \right)\end{equation}
 and define  $L:TM\rightarrow\mathrm{End}(E)$ by
 \begin{equation}L(X)=\frac{R_k\gamma(\nabla_XX_k)}{4}.\end{equation}
 Following Branson-Gilkey (\cite[Lemma 1.3]{Branson Gilkey}),  we claim  $\widetilde{\nabla}\triangleq\widehat{\nabla}+L$ is a compatible connection on $(E,\gamma)$. To this end we first let $X,\alpha\in C^{\infty}(M;TM)$, $\phi\in C^{\infty}(M;E)$. Obviously, $\alpha$ is of the form
 $\alpha=\alpha_kX_k$
 for some $\alpha_k\in C^{\infty}(M)$. Thus
 \begin{align*}
\widetilde{\nabla}_X(\gamma(\alpha)\phi)&=\widetilde{\nabla}_X(\alpha_kR_k\phi)=\widehat{\nabla}_X(\alpha_kR_k\phi)+L(X)\gamma(\alpha)\phi\\
&=(X\alpha_k)(R_k\phi)+\gamma(\alpha)\widehat{\nabla}_X\phi+L(X)\gamma(\alpha)\phi
 \end{align*}
and
 \begin{align*}
\gamma(\nabla_X\alpha)\phi+\gamma(\alpha)\widetilde{\nabla}_X\phi&=\gamma((X\alpha_k)X_k+
\alpha_k\nabla_XX_k)\phi+\gamma(\alpha)\widehat{\nabla}_X\phi+
\gamma(\alpha)L(X)\phi\\
&=(X\alpha_k)(R_k\phi)+\alpha_k\gamma(\nabla_XX_k)\phi+\gamma(\alpha)\widehat{\nabla}_X\phi+
\gamma(\alpha)L(X)\phi.
 \end{align*}
Hence to prove the compatible condition (\ref{compatible original})  currently for $\alpha\in C^{\infty}(M;TM)$ it suffices to show
$[L(X), \gamma(\alpha)]=g(\alpha, X_k)\gamma(\nabla_XX_k),$
which is obviously equivalent to
\begin{equation}\label{equivalent compa}[L(X_i), \gamma(X_j)]=g(X_j, X_k)\gamma(\nabla_{X_i}X_k)\ \ \ (1\leq i,j\leq d).\end{equation}
Letting $\Gamma_{ij}^k=-\Gamma_{ik}^j$ denote the Christoffel symbols, that is, $\nabla_{X_i}X_j=\Gamma_{ij}^kX_k$, we have
\begin{align*}
[L(X_i), \gamma(X_j)]&=\frac{1}{4}\big(R_k\gamma(\nabla_{X_i}X_k)R_j-R_jR_k\gamma(\nabla_{X_i}X_k)\big)\\
&=\frac{1}{4}\big(R_k\Gamma_{ik}^nR_nR_j-R_jR_k\Gamma_{ik}^nR_n\big)\\
&=\frac{1}{4}\big(-2R_k\Gamma_{ik}^n\delta_{nj}-R_k\Gamma_{ik}^nR_jR_n-R_jR_k\Gamma_{ik}^nR_n\big)\\
&=\frac{1}{2}(-R_k\Gamma^j_{ik}+\Gamma^n_{ij}R_n)
=\Gamma^n_{ij}R_n\\
&=\gamma(\nabla_{X_i}X_j)=g(X_j, X_k)\gamma(\nabla_{X_i}X_k),
\end{align*}
 which proves  (\ref{equivalent compa}). Generally, if $\alpha\in  C^{\infty}(M;\mathrm{Cl}(TM))$ is of the form
 $\alpha=\alpha^{(1)}\ast \alpha^{(2)}\ast \cdots \ast\alpha^{(n)}$ where $\alpha^{(k)}\in C^{\infty}(M;TM)$ $(1\leq k\leq n)$,
 then according to (\ref{clifford preverse}),
 \begin{align*}
\widetilde{\nabla}_X(\gamma(\alpha)\phi)&=\widetilde{\nabla}_X\big(\gamma(\alpha^{(1)})\Big[\gamma(\alpha^{(2)})\cdots\gamma(\alpha^{(n)})\phi\Big]\big)\\
&=\gamma(\nabla_X\alpha^{(1)})\Big[\gamma(\alpha^{(2)})\cdots\gamma(\alpha^{(n)})\phi\Big]+\gamma(\alpha^{(1)})\widetilde{\nabla}_X\Big[\gamma(\alpha^{(2)})\cdots\gamma(\alpha^{(n)})\phi\Big]\\
&=\cdots\\
&=\sum_{k=1}^n\gamma(\alpha^{(1)})\cdots\gamma(\alpha^{(k-1)})\gamma(\nabla_X\alpha^{(k)})\gamma(\alpha^{(k+1)})\cdots\gamma(\alpha^{(n)})\phi+
\gamma(\alpha)\widetilde{\nabla}_X\phi\\
&=\gamma(\nabla_X\alpha)\phi+\gamma(\alpha)\widetilde{\nabla}_X\phi,
 \end{align*}
 which suffices to claim (\ref{compatible original}) for arbitrary $\alpha\in  C^{\infty}(M;\mathrm{Cl}(TM))$ by linearity.
  Thus $(E,\gamma,\widetilde{\nabla})$ is a Dirac bundle and
  \begin{equation}\label{Dirac parallel}
  D=\gamma\widetilde{\nabla}=\gamma\widehat{\nabla}+\frac{R_iR_j\gamma(\nabla_{X_i}X_j)}{4}
  \end{equation} is the associated generalized Dirac operator.
By Theorem \ref{theorem 51}, we get $\mathscr{H}_1(D,D^2)=0$.
  It is well-known (\cite{Branson Gilkey}) that one can always furnish $(E,\gamma,\widetilde{\nabla})$ with a smooth hermitian fiber metric so that  $D$ is self-adjoint on $L^2(M;E)$ defined by this fiber metric and the metric measure on $M$.
  So in this context
   $\mathscr{L}_1(D)=0$.
Next we derive a local expression of $D$. To this end we first define a flat connection $\ddot{\nabla}$ on $\mathrm{End}(E)$ by
\begin{equation}\ddot{\nabla}_X\left(\begin{array}{ccc}
\phi_{11} & \cdots & \phi_{1r} \\
\vdots & \ddots & \vdots \\
\phi_{r1} & \cdots & \phi_{rr}
 \end{array}
 \right)=
\left(\begin{array}{ccc}
X\phi_{11} & \cdots & X\phi_{1r} \\
\vdots & \ddots & \vdots \\
X\phi_{r1} & \cdots & X\phi_{rr}
 \end{array}
 \right)\end{equation}
then introduce
\begin{equation}
\Psi(X,Y,Z,W)=\gamma(X)\gamma(Y)\big(\gamma(\nabla_ZW)-\ddot{\nabla}_Z(\gamma(W))\big),
\end{equation}
where  $X,Y,Z,W\in C^{\infty}(M;TM)$.  It is easy to verify that $\Psi$ is $C^{\infty}(M)$-linear in all of the four variables. This implies
 $\mathrm{Tr}_{2,4}(\mathrm{Tr}_{1,3}(\Psi))$
  is a smooth endomorphism of $E$, where $\mathrm{Tr}_{1,3}(\Psi)$ denotes the trace of $\Psi$
 with respect to the first and third variables, and $\mathrm{Tr}_{2,4}(\Psi)$ is understood in a similar way. Thus globally we have
\[
\mathrm{Tr}_{2,4}(\mathrm{Tr}_{1,3}(\Psi))=\Psi(X_i,X_j,X_i,X_j)=R_iR_j\gamma(\nabla_{X_i}X_j),
\]
which means the generalized Dirac operator $D$ can also be written as
\begin{equation}D=\gamma\widehat{\nabla}+\frac{\mathrm{Tr}_{2,4}(\mathrm{Tr}_{1,3}(\Psi))}{4}.\end{equation}
Hence in a local coordinate system $(x^1,\ldots,x^d)$,
$D$ is of the form
\begin{equation}
D=\gamma(dx^k)\frac{\partial}{\partial x^k}+\frac{1}{4} \gamma(dx^i)\gamma(\frac{\partial}{\partial x^j})\big(\gamma(\nabla_{\partial_i}dx^j)-\frac{\partial(\gamma(dx^j))}{\partial x^i}\big).
\end{equation}
 As an example, we assume that $M$ is a three-dimensional  oriented closed Riemannian manifold. By Steenrod's theorem $M$ is parallelizable
  and we can set \[R_1=\left(\begin{array}{cc}
0 & -\mathrm{i} \\
 -\mathrm{i} & 0
 \end{array}
 \right),\ \ \ R_2=\left(\begin{array}{cc}
0 & -1 \\
 1 & 0
 \end{array}
 \right),\ \ \ R_3=\left(\begin{array}{cc}
-\mathrm{i} & 0 \\
0 & \mathrm{i}
 \end{array}
 \right).\]
 Then it is straightforward to verify that $D$ is nothing but the massless Dirac operator $W$ locally defined by (A.3)
 in \cite{ChervovaDV}. Note
 $\mathrm{Tr}_{E}(\mathscr{L}_1(D))=0$
 was established in \cite{ChervovaDV}. To compare, we have shown that $\mathscr{L}_1(D)=0$.
 Moreover, it was proved in \cite{ChervovaDV} that the sub-principal symbol of $D$
 is proportional to the identity matrix at each point of the manifold $M$, where
the corresponding bundle trivialization $\{s_1,s_2\}$
 of  $E=M\times \mathbb{C}^2$ is chosen to be
 \[s_1=\left(\begin{array}{c}
1  \\
0
 \end{array}
 \right),\ \ \ s_2=\left(\begin{array}{c}
0  \\
 1
 \end{array}
 \right).\]
 In the following we will give
 a short proof of this fact. According to Example \ref{Example Dirac} and our previous discussions, it is easy to see that $\omega=L$, and consequently,
 \begin{align*}
 \mathrm{Sub}(D)&=\frac{1}{2}(\gamma(X_j)L(X_j)+L(X_j)\gamma(X_j))\\
 &=\frac{1}{8}(R_jR_k\Gamma_{jk}^nR_n+R_k\Gamma_{jk}^nR_nR_j),
 \end{align*}
 where we recall the Christoffel symbols $\Gamma_{jk}^n$ are given by $\nabla_{X_j}X_k=\Gamma_{jk}^nX_n$ with $\Gamma_{jk}^n=-\Gamma_{jn}^k$.
 Note $\mathrm{Sub}(D)$ is a sum of $27=3\times 3\times 3$ items. Next we no longer use Einstein's sum convention  and decompose $\mathrm{Sub}(D)$ into four parts:
 \begin{align*}
 \mathrm{Sub}(D)=\sum_{k=n}\star+\sum_{k\neq n, j=n}\star+\sum_{k\neq n, j\neq n, k=j}\star+\sum_{k\neq n, j\neq n, k\neq j}\star\triangleq\mathrm{I}+\mathrm{II}+\mathrm{III}+\mathrm{IV},
  \end{align*}
where $\star$ is short for $\frac{1}{8}(R_jR_k\Gamma_{jk}^nR_n+R_k\Gamma_{jk}^nR_nR_j)$.
One can easily  deduce from the properties $\Gamma_{jk}^n=-\Gamma_{jn}^k$, $R_jR_k+R_kR_j=-2\delta_{jk}$ that
$\mathrm{I}=\mathrm{II}=\mathrm{III}=0$ and
\begin{align*}
\mathrm{IV}&=\sum_{(j,k,n)\in \pi_3}\frac{1}{8}(R_jR_k\Gamma_{jk}^nR_n+R_k\Gamma_{jk}^nR_nR_j)\\
&=\frac{1}{4}\sum_{(j,k,n)\in \pi_3}\Gamma_{jk}^nR_jR_kR_n\\
&=\frac{1}{2}\sum_{(j,k,n)\in \pi_3\ \mathrm{is\ even}}\Gamma_{jk}^nR_jR_kR_n\\
&=-\frac{\Gamma_{12}^3+\Gamma_{23}^1+\Gamma_{31}^2}{2},
\end{align*}
where $\pi_3$ denotes the symmetric group of degree 3, and the last equality follows from $R_1R_2R_3=R_2R_3R_1=R_3R_1R_2=-1.$
To conclude  we get
\begin{equation}
\mathrm{Sub}(D)=-\frac{\Gamma_{12}^3+\Gamma_{23}^1+\Gamma_{31}^2}{2},
\end{equation}
which agrees with \cite[Lemma 6.1]{ChervovaDV}. Furthermore, it is characterized in \cite{ChervovaDV} that
given a self-adjoint Dirac type operator $D$ on  sections of $M\times\mathbb{C}^2$, it is a massless one if and only if
$\mathrm{Tr}(\mathscr{L}_1(D))=0$ and $\mathrm{Sub}(D)$ is pointwise proportional to the identity matrix, where
the corresponding bundle trivialization $\{s_1,s_2\}$
 of  $M\times \mathbb{C}^2$ is chosen as before. We can also give a proof of this fact. Based on the previous discussions,
 it suffices to prove the necessary part. So let $D$ be a self-adjoint Dirac type operator such that $\mathrm{Tr}(\mathscr{L}_1(D))=0$ and $\mathrm{Sub}(D)$ is pointwise proportional to the identity matrix. Since $M$ is a parallelizable three-dimensional manifold,
 it is easy to see that there exists a unique massless Dirac operator $\gamma\widetilde{\nabla}$ sharing the same principal symbol with $D$ and
 a unique bundle endomorphism $\psi$ such that $D=\gamma\widetilde{\nabla}+\psi$. According to Example \ref{example 55}, $\mathrm{Tr}(\psi)=0$. Note also
 \[\mathrm{Sub}(D)=\mathrm{Sub}(\gamma\widetilde{\nabla})+\psi,\]
 which implies that $\psi$ is pointwise proportional to the identity matrix. Thus we must have $\psi=0$, and consequently, $D=\gamma\widetilde{\nabla}$ is indeed a massless Dirac operator.
\end{example}

\thanks{\emph{Acknowledgments.}
 Part of the work for this paper was carried out at the programme ``Modern Theory of Wave Equations" at the Schr\"odinger institute
and both authors are grateful to the ESI for support and the hospitality during their stay. The second author would also like to
thank the \emph{Hausdorff Center of Mathematics} in Bonn for the support for his stay during the trimester program
``Non-commutative Geometry and its Applications''. Both authors thank Dmitri Vassiliev for interesting discussions.}

\end{document}